

\documentclass[10pt,twoside,dvips]{article}
\usepackage[a5paper,a5paper,layoutwidth=148.5mm,layoutheight=7.77817in,inner=50pt,outer=39pt,top=40pt,bottom=28pt,
footskip=18pt
]{geometry}
\usepackage{amsmath,amssymb,amsthm}
\usepackage{mathtools}
\mathtoolsset{mathic}

\usepackage[utf8]{inputenc}
\usepackage[T1]{fontenc}
\usepackage{lmodern}
\usepackage{microtype}
\usepackage[german,french,british]{babel}

\usepackage[dvipsnames]{xcolor}

\newcommand\urlprefix{}

\PassOptionsToPackage{hyphens}{url}

\usepackage[bookmarksopen=false,breaklinks=true,plainpages=false,hyperindex=true,pdfstartview=FitH,colorlinks=true,pdfpagelabels=true,colorlinks=true,linkcolor=blue,citecolor=red,urlcolor=red,hypertexnames=false,
backref=page,pagebackref=true,
]{hyperref}

\renewcommand*{\backref}[1]{}
\renewcommand*{\backrefalt}[4]{\ifcase #1\or(p.~#2)\else(pp.~#2)\fi}
\renewcommand*{\backreftwosep}[1]{ and }
\renewcommand*{\backreflastsep}[1]{, and }

\usepackage{doi}\usepackage{breakurl}
\newtheorem{theorem}{Theorem}[section]
\newtheorem{proposition}[theorem]{Proposition}
\newtheorem{lemma}[theorem]{Lemma}
\newtheorem{corollary}[theorem]{Corollary}

\theoremstyle{remark}
\newtheorem{remark}[theorem]{Remark}
\newtheorem{example}[theorem]{Example}

\theoremstyle{definition}
\newtheorem{definition}[theorem]{Definition}
\newtheorem*{gencontext}{General context}
\newtheorem{context}[theorem]{Context}

\newtheorem{definota}[theorem]{Definition and notation}

\newtheorem{divalgorithm}[theorem]{Division algorithm}
\newtheorem{spolyalgorithm}[theorem]{S-polynomial algorithm}
\newtheorem{buchbalgorithm}[theorem]{Buchberger's algorithm}
\newtheorem{syzalgorithm}[theorem]{Syzygy algorithm for terms}
\newtheorem{Syzalgorithm}[theorem]{Schreyer's syzygy algorithm}
\newtheorem*{problem}{Problem}

\numberwithin{equation}{section}

\newcommand{\so}[1]{\left\{#1\right\}}
\newcommand{\sotq}[2]{\{\,#1\mathrel;\allowbreak#2\,\}}
\newcommand \sumtq[2]{\sum_{#2}{#1}}
\newcommand \som {\sum\nolimits}

\DeclareMathOperator{\LT}{LT}
\DeclareMathOperator{\LM}{LM}
\DeclareMathOperator{\LC}{LC}
\DeclareMathOperator{\LP}{LPos}
\DeclareMathOperator{\Rad}{Rad}
\DeclareMathOperator{\Syz}{Syz}
\DeclareMathOperator{\mdeg}{mdeg}
\DeclareMathOperator{\Ann}{Ann}
\DeclareMathOperator{\ann}{ann}
\DeclareMathOperator{\lcm}{lcm}
\DeclareMathOperator{\Ker}{Ker}

\newcommand{\rS}{\mathrm{S}}
\newcommand \NN{\mathbb{N}}
\newcommand \ZZ{\mathbb{Z}}

\newcommand \uX  {\underline{X}}

\newcommand \lst[1]{(#1)}

\newcommand \gen[1]{\langle{#1}\rangle}
\newcommand \gentq[2]{\gen{\,#1\mathrel;#2\,}}

\newcommand \F {\mathbf{F}}
\renewcommand \H {\mathbf{H}}
\newcommand \K {\mathbf{K}}
\newcommand \R {\mathbf{R}}
\newcommand \V {\mathbf{R}}
\newcommand \Hm {\H_{m}}

\newcommand \noi {\noindent}
\newcommand \sms {\smallskip}
\newcommand \sni {\sms\noi}

\newcommand \bs {\bigskip}
\newcommand \bni {\bs\noi}

\newcommand{\intervalles}{[ \negthinspace [0 , N-1 ] \negthinspace ]}
\newcommand{\intervalless}{[ \negthinspace [1 , N-1 ] \negthinspace ]}

\providecommand{\divides}{\mathbin|}

\usepackage{enumitem}
\setlist[itemize, 1]{wide}
\setlist[enumerate, 1]{wide}
\setlist[enumerate, 2]{wide,leftmargin = \parindent}%

\newcommand{\+}{\mkern.5\thinmuskip}
\setlength\tabcolsep{.4em}

\usepackage{listings}
\lstset{basicstyle=\sffamily,
  language=GAP,
  numbers=left,
  numberstyle=\tiny,
    numbersep=5pt,
  escapeinside={(*@}{@*)},
  morekeywords={find,such,that,from,to,input,output,local,variables,compute,by,Input,Output},
  deletekeywords={and},
  stringstyle=\ssfamily,
  literate={:=}{{$\gets$}}1,  mathescape = true}

\usepackage{authblk}

\setcounter{secnumdepth}{1}

\newcommand{\X}[1][1]{\ifcase#1\or X\or Y\fi}

\begin{document}

\tolerance 1414
\hbadness 1414
\emergencystretch 1.5em
\hfuzz 0.3pt
\widowpenalty=10000
\vfuzz \hfuzz
\raggedbottom

\title{
  The syzygy theorem for Bézout rings}





\author[*]{Maroua Gamanda}
\author[**]{Henri Lombardi}
\author[**,1]{Stefan Neuwirth}
\author[*,2]{Ihsen Yengui}
\affil[*]{Département de mathématiques,  Faculté des sciences de Sfax, Université de Sfax, 3000 Sfax, Tunisia, \url{marwa.gmenda@hotmail.com}, \url{ihsen.yengui@fss.rnu.tn}.}
\affil[**]{Laboratoire de mathématiques de Besançon, Université Bourgogne Franche-Comté, 25030 Besançon Cedex, France, \url{henri.lombardi@univ-fcomte.fr}, \url{stefan.neuwirth@univ-fcomte.fr}.}
\begingroup
  \renewcommand\thefootnote{}
  \footnotetext{The statement of Theorems \ref{hilbvaluation1}, \ref{hilbvaluation0divis}, \ref{hilbBezout1} has been amended with respect to the published version, as well as the free resolution at the end of Example \ref{ex0002}. The changes have been typeset in {\color{OliveGreen}green}.}
\endgroup
\refstepcounter{footnote}\footnotetext{Supported by the French ``Investissements d’avenir'' program, project ISITE-BFC (contract ANR-15-IDEX-03).}
\refstepcounter{footnote}\footnotetext{Supported by  the John Templeton Foundation (ID~60842).}
\date{}
\maketitle

\date{}


\begin{abstract}
We provide constructive versions of Hilbert's syzygy theorem for
 $\ZZ$ and $\ZZ /N\ZZ $ following \foreignlanguage{german}{Schreyer}'s method. Moreover, we extend these results to arbitrary coherent strict Bézout rings with a divisibility test for the case of finitely generated modules whose module of leading terms is finitely generated.
\end{abstract}

  \bni MSC 2010:
13D02, 
13P10, 
13C10, 
13P20, 
14Q20

  \sni Keywords:
  Syzygy theorem, free resolution, monomial order, \foreignlanguage{german}{Schreyer}'s monomial order, \foreignlanguage{german}{Schreyer}'s syzygy algorithm, dynamical Gröbner basis, valuation ring, Gröbner ring, strict Bézout ring.

\tableofcontents


\section*{Introduction}\label{Introduction}
\addcontentsline{toc}{section}{Introduction}

This paper is written in the framework of Bishop style constructive mathematics (see~\cite{Bi67,BB85,ACMC,MRR}).
It can be seen as a sequel to the papers~\cite{HY,Ye2}. The main goal is to obtain constructive versions of Hilbert's syzygy theorem for Bézout domains of  Krull dimension~$\leq 1$  with a divisibility test and for coherent zero-dimensional Bézout rings with a divisibility test (e.g.\ for $\ZZ $ and $\ZZ /N\ZZ $, see~\cite{ACMC,MoY,Y4,Y5}) following \foreignlanguage{german}{Schreyer}'s method. These two cases are instances of Gröbner rings. Moreover, we extend these results to arbitrary coherent strict Bézout rings with a divisibility test for the case of finitely generated modules whose module of leading terms is finitely generated.


\section{Gröbner bases for modules over a discrete ring}\label{s1}

\begin{gencontext}
  In this article, $\R$ is a commutative ring with unit, $X_{1},\dots,X_{n}$ are $n$~indeterminates ($n\geq 1$), $\R[\uX]=\R[X_{1},\dots,X_{n}]$, $\Hm\simeq\mathbb{A}^{m}(\R[\uX])$ is a free $\R[\uX]$-module with basis $(e_{1},\dots,e_{m})$ ($m\geq 1$), and $>$~is a monomial order on~$\Hm$ (see Definition~\ref{mononmial-module}).
\end{gencontext}

We start with recalling the following constructive definitions.

\begin{definition}\label{defdef}\leavevmode
\begin{itemize}

\item $\R$ is \emph{discrete} if it is equipped with a zero test: equality is decidable.

\item $\R$  is \emph{zero-dimensional} and we write $\dim\R\leq0$ if
\[
  \forall a\in \R\ \exists k\in\NN\ \exists x\in\R\ \ a^{k}(ax-1)=0.
\]
\item $\R$ has Krull dimension $\leq 1$ and we write $\dim\R\leq1$  if
\[
  \forall a,b\in \R\ \exists k,\ell\in\NN\ \exists x,y\in\R\ \ b^{\ell}(a^{k}(ax-1)+by)=0.
\]
\item Let $U$ be an $\R$-module.
The \emph{syzygy module} of a $p$-tuple $\lst{v_{1},\dots,v_{p}}\in U^{p}$ is
\[
  \Syz(v_{1},\dots,v_{p})\coloneqq\sotq{ \lst{b_{1},\dots,b_{p}}\in \R^{n} }{ b_{1}v_{1}+\cdots+b_{p}v_{p}=0}\text.
\]
The syzygy module of a single element~$v$ is the \emph{annihilator} $\Ann(v)$ of~$v$.

\item An $\R$-module $U$ is \emph{coherent} if the syzygy module of
  every $p$-tuple of elements of~$U$ is finitely generated,\footnote{In contradistinction to Bourbaki and to the Stacks project, we do not require $U$~to be finitely generated.} i.e.\ if
  there is an algorithm providing a finite system of generators for
  the syzygies, and an algorithm that represents each syzygy as a
  linear combination of the generators. $\R$ is \emph{coherent} if it is coherent as an $\R$-module. It is well known that a module is coherent iff on the one hand any intersection of two finitely generated submodules is finitely generated, and on the other hand the annihilator of every element is a finitely generated ideal.

\item $\R$ is \emph{local} if, for every element $x\in \R$, either~$x$ or~$1+x$ is invertible.

\item $\R$ is \emph{equipped with
a  divisibility test} if,  given $a,b \in
\R$, one can answer the question $a\mathrel{\in?}\gen{b}$ and, in the case of
a positive answer, one can explicitly provide~$c \in \R$ such that
$a=bc$.

\item $\R$ is \emph{strongly discrete} if it is  equipped with a membership test for
finitely generated ideals, i.e.\ if, given  $a,b_{1},\dots,b_{p} \in \R$,
one can answer the question $a\mathrel{\in?}\gen{b_{1},\dots,b_{p}}$ and, in
the case of a positive answer, one can explicitly provide $c_{1},\dots,c_{p} \in
\R$ such that $a=c_{1}b_{1}+\cdots +c_{p}b_{p}$.

\item $\R$ is a \emph{valuation ring}\footnote{Here we follow Kaplansky's definition: $\R$ may have nonzero zerodivisors. In~\cite{Y5} it is required that a valuation ring be strongly discrete. We prefer to add this hypothesis when the argument requires it, so as to discriminate the algorithms that rely on the divisibility test from those that do not.} if  every two elements are comparable w.r.t.\ division, i.e.\ if, given $a,b \in \R$, either $a \divides b$ or $b \divides a$. A valuation ring is a local ring; it is coherent iff the annihilator of any element is principal. A valuation domain is coherent. A valuation ring is strongly discrete iff it is equipped with a divisibility test.

\item $\R$ is a \emph{Bézout ring} if every
finitely generated ideal is  principal, i.e.\ of the form $\gen a=\R a$ with $a \in \R$. A Bézout ring is strongly discrete iff it is equipped with a divisibility test; it is coherent iff the annihilator of any element is principal. To be a valuation ring is to be a Bézout local ring (see~\cite[Lemma IV-7.1]{ACMC}).

\item A Bézout ring $\R$ is \emph{strict} if for all $b_1, b_2 \in\R$ we can find
$d, b'_{1} , b'_{2} , c_1, c_2 \in\R$ such that $b_1 = d b'_{1}$, $b_2 = d b'_{2}$, and $c_1 b'_{1} + c_2 b'_{2} = 1$. Valuation rings and Bézout domains are strict Bézout rings; a quotient or a localisation of a strict Bézout ring is again a strict Bézout ring (see \cite[Exercise~IV-7 pp.~220--221, solution pp.~227--228]{ACMC}). A zero-dimensional Bézout ring is strict (because it is a ``Smith ring'', see \cite[Exercice~XVI-9 p.~355, solution p.~526]{DLQ2014} and \cite[Exercise~IV-8 pp.~221-222, solution p.~228]{ACMC}).

\end{itemize}
\end{definition}

\begin{remark} \label{remstrongdis}
In some cases, e.g.\ euclidean domains or polynomial rings over a discrete field, a strongly discrete ring is equipped with a \emph{division algorithm} which, for arbitrary $a\in\R$ and $(b_{1},\dots,b_{p}) \in \R^p$, provides an expression $a=c'_{1}b_{1}+\cdots +c'_{p}b_{p}+e$ with \emph{quotients} $c'_{1},\dots,c'_{p}$
and a \emph{remainder} $e$, where $e=0$ iff $a\in\gen{b_{1},\dots,b_{p}}$.
When a strongly discrete ring is not equipped with a division algorithm, we shall consider that the division is trivial
if $a\notin\gen{b_{1},\dots,b_{p}}$: the quotients vanish and $e=a$.
In the case of Bézout rings, dividing $a$ by  $\lst{b_{1},\dots,b_{p}}$ amounts to dividing $a$ by the gcd~$d$ of $\lst{b_{1},\dots,b_{p}}$, since $a=c'd+e$ can be read as $a=(c'c_{1})b_{1}+\cdots+(c'c_{p})b_{p}+e$,  where $d=c_{1}b_{1}+\dots+c_{p}b_{p}$.
\end{remark}

\begin{definition}[Monomial orders on finite-rank free {$\R[\uX]$}-modules, see~\cite{AL,coxlittleoshea05}]\label{mononmial-module}
\leavevmode

\begin{enumerate}
\item\label{mononmial-module-1}
  A \emph{monomial} in $\Hm$ is a vector of type $\uX^{\alpha} e_{\ell}$ ($1 \leq \ell \leq m$), where $\uX^{\alpha}=X_{1}^{\alpha_{1}}\cdots X_{n}^{\alpha_{n}}$ is a monomial in $\R[\uX]$;
  the index~$\ell$ is the \emph{position} of the monomial. The set of monomials in $\Hm$ is denoted by $\mathbb{M}_{n}^{m}$, with $\mathbb{M}_{n}^{1} \cong \mathbb{M}_{n}$ (the set of monomials in $\R[\uX]$). E.g., $X_{1}X_{2}^{3}e_{2}$ is a monomial in $\Hm$, but $2X_{1}e_{3}$, $(X_{1}+X_{2}^{3})e_{2}$ and $X_{1}e_{2}+X_{2}^{3}e_{3}$ are not.

  If $M=\uX^{\alpha} e_{\ell}$ and $N=\uX^{\beta} e_{k}$, we say that $M$ \emph{divides} $N$ if $\ell=k$ and $\uX^{\alpha}$ divides $\uX^{\beta}$. E.g., $X_{1}e_{1}$ divides $X_{1}X_{2}e_{1}$, but does not divide $X_{1}X_{2}e_{2}$. Note that in the case that $M$ divides~$N$, there exists a monomial $\uX^{\gamma}$ in $\R[\uX]$ such that $N=\uX^{\gamma} M$: in this case we define
\(
  N/M\coloneqq \uX^{\gamma}
\);
e.g., $(X_{1}X_{2}e_{1})/(X_{1}e_{1})=X_{2}$.

A \emph{term} in $\Hm$ is a vector of type $c M$, where $c \in \R \setminus \so{ 0 }$ and $M \in \mathbb{M}_{n}^{m}$. We say that a term $c M$ \emph{divides} a term $c' M'$, with $c,c' \in \R \setminus \so{0}$ and $M,M' \in \mathbb{M}_{n}^{m}$, if $c$ divides $c'$ and $M$ divides~$M'$.

\item A \emph{monomial order} on $\Hm$ is a
relation $>$ on $\mathbb{M}_{n}^{m}$ such that
\begin{enumerate}
\item $>$ is  a total order on $\mathbb{M}_{n}^{m}$,
\item $ \uX^{\alpha} M > M$ for all $M \in
\mathbb{M}_{n}^{m}$ and $ \uX^{\alpha} \in  \mathbb{M}_{n} \setminus \so{1}$,
\item $M>  N \implies \uX^{\alpha} M > \uX^{\alpha} N $ for all $M,N \in
\mathbb{M}_{n}^{m}$ and $ \uX^{\alpha} \in  \mathbb{M}_{n}$.
\end{enumerate}
Note that, when specialised to the case $m=1$, this definition coincides with the definition of a monomial order on $\R[\uX]$.

When $\R$ is discrete, any \emph{nonzero} vector $h \in \Hm$ can be written as a sum of terms
\[
  h=c_{t}M_{t}+c_{t-1}M_{t-1}+\dots+c_{1}M_{1},
\]
with $c_{\ell} \in \R \setminus \so{0}$, $M_{\ell} \in \mathbb{M}_{n}^{m}$, and $M_{t} > M_{t-1} > \cdots > M_{1}$. We define the \emph{leading coefficient}, \emph{leading monomial}, and \emph{leading term} of $h$ as in the ring case: $\LC (h)=c_{t}$, $\LM (h)=M_{t}$, $\LT (h)=c_{t}M_{t}$. Letting $M_{t}=\uX^{\alpha} e_{\ell}$ with $\uX^{\alpha}\in \mathbb{M}_{n}^{m}$ and $1\leq \ell \leq m$, we say that $\alpha$ is the \emph{multidegree of $h$} and write $\mdeg(h)=\alpha$, and that
the index~$\ell$ is the \emph{leading position} of~$h$ and write $\LP(h)=\ell$.

We stipulate that $\LT (0)=0$ and $\mdeg(0)=-\infty$, but we do not define $\LP(0)$.

\item A monomial order on
$\R[\uX]$ gives rise to the following canonical
monomial order on~$\Hm$: for monomials $M=\uX^{\alpha} e_{\ell}$ and $N=\uX^{\beta} e_{k} \in
\mathbb{M}_{n}^{m}$, let us define that
\[
  M>N \quad \text{if}\quad\left|\,
\begin{aligned}
\text{either }&\uX^{\alpha} > \uX^{\beta} \\
\text{or both }&\uX^{\alpha} = \uX^{\beta}\text{ and $\ell<k$.}
  \end{aligned}
\right.
\]
This monomial order is called \emph{term over position} (TOP) because it
gives more importance to the monomial order on $\R[\uX]$
than to the vector position. E.g., when  $X_{2}>X_{1}$,
we have
\[
  X_{2}e_{1}> X_{2}e_{2} > X_{1}e_{1} > X_{1}e_{2}\text.
\]
\end{enumerate}
\end{definition}


\begin{definition}[Gröbner bases and \foreignlanguage{german}{Schreyer}'s monomial order]\label{defiGrob}
  Let $\R$ be a discrete ring. Consider $G=\lst{g_{1},\dots,g_{p}}$, $g_{j}\in {\Hm\setminus \so0}$, and the finitely generated submodule $U= \gen{g_{1},\dots,g_{p}}=\R[\uX] g_{1}+\cdots+\R[\uX] g_{p}$  of $\Hm$.
  \begin{enumerate}
  \item The \emph{module of leading terms} of~$U$ is $\LT (U)\coloneqq\gentq{\LT(u)}{u \in U}$.

\item $G$ is a \emph{Gröbner basis} for $U$ if $\LT(U)= \gen{\LT (G)}\coloneqq \gen{\LT (g_{1}),\dots,\LT(g_{p})}$.

\item Let $(\epsilon_{1},\dots,\epsilon_{p})$ be the canonical basis of~$\R[\uX]^{p}$. \emph{\foreignlanguage{german}{Schreyer}'s monomial order induced by~\(>\) and \(\lst{g_{1},\dots,g_{p}}\)} on~$\R[\uX]^{p}$ is the order denoted by $>_{g_{1},\dots,g_{p}}$, or again by~$>$, defined as follows:
\[
  \uX^{\alpha}\epsilon_{\ell}>\uX^{\beta}\epsilon_{k}\quad\text{if}\quad\left|\,
    \begin{aligned}
      \text{either }&\LM (\uX^{\alpha} g_{\ell}) > \LM (\uX^{\beta} g_{k})\\
      \text{or both }&\LM (\uX^{\alpha} g_{\ell}) = \LM (\uX^{\beta} g_{k}) \text{ and $\ell<k$.}
    \end{aligned}\right.
\]
\end{enumerate}
\end{definition}

\foreignlanguage{german}{Schreyer}'s monomial order is defined on~$\R[\uX]^{p}$ in the same way as when~$\R$ is a discrete field (see \cite[p.~66]{EH}).

\section{The algorithms}\label{sec:generic-algorithms}

\subsection{The context}\label{sec:context}

Let us now present the algorithms to be discussed in this article in a form that adapts as well to the case where $\R$ is a coherent valuation ring with a divisibility test as to the case where $\R$ is a coherent strict Bézout ring with a divisibility test (note that the former case is the local case of the latter).
This is achieved by appeals to ``\lstinline|find $\dots$ such that $\dots$|'' commands that will adapt to the corresponding framework.
I.e., the following context is needed for the algorithms, except that coherence and strictness is not used in the division algorithm and that the divisibility test is not used for the computation of S-polynomials.

\begin{context} \label{context}
The algorithms take place in a coherent strict Bézout ring $\R$ with a divisibility test. In the local case,~$\R$ is a coherent valuation ring with a divisibility test. 
\end{context}

\subsection{The division algorithm}\label{sec:division-algorithm}

This algorithm takes place in Context~\ref{context} for $\R$;  note however that coherence and strictness are not used here.
Like the classical division algorithm for $\F[\uX]^{m}$ with $\F$ a discrete field (see~\cite[Algorithm~211]{Y5}), this algorithm has the following goal.
 \[
 \begin{tabular}{rl}
   \lstinline|Input|&$h\in\Hm$, $h_{1},\dots,h_{p} \in \Hm\setminus \so 0$.\\
   \lstinline|Output|&$q_{1},\dots,q_{p} \in \R[\uX]$ and $r \in \Hm$ such that\\
         &$\left\{\begin{aligned}
             &h=q_{1}h_{1}+\cdots+q_{p}h_{p}+r\text,\\
             &\LM (h)\geq \LM (q_{j})\LM(h_{j})\text{ whenever $q_{j}\ne0$,}\\
             &T\notin \gen{\LT(h_1),\dots,\LT (h_p)} \text{ for each term~$T$ of $r$.}
       \end{aligned}\right.$
 \end{tabular}
\]

\begin{definota} \label{notaremainder}
  The vector $r$ is called \emph{a remainder of \(h\) on division by \(H= \lst{h_{1},\dots,h_{p}}\)} and is denoted by $r= \overline{h}^{H}$.\end{definota}

This notation would gain in precision if it included the dependence of the remainder on the algorithm mentioned in Remark~\ref{remstrongdis}.


\begin{divalgorithm}\label{gendivalg}\leavevmode
\begin{lstlisting}
local variables $j:\so{1,\dots,p}$, $D:\text{subset of }\so{1,\dots,p}$,
                $c,c_{j},d,e:\R$, $h':\Hm$;
$q_{1}\gets0$; ... ; $q_{p}\gets0$; $r\gets0$; $h'\gets h$;
while $h'\ne0$ do
  $D\gets\sotq{j}{\LM (h_{j}) \divides \LM (h')}$;
  find $d,c_{j}\ (j\in D)$ such that
    $d = \gcd({\LC(h_{j})})_{j\in D} = \sumtq{c_{j}\LC(h_{j})}{j\in D}$; (*@\label{gendivalg:5}@*)
  find $c,e$ such that
    $\LC(h') = c d + e$ (*@{\rm (with $e=0$ iff $d$ divides $\LC(h'),$ see Remark~\ref{remstrongdis})}@*); (*@\label{gendivalg:6}@*)
  for $j$ in $D$ do
    $q_{j} \gets q_{j} + c c_{j} (\LM (h') / \LM(h_{j}))$
  od;
  $r \gets r + e \LM(h')$;
  $h'\gets {h' - \sumtq {c c_{j} (\LM(h') / \LM(h_{j})) h_{j}}{j\in D} - e \LM(h')}$
od
\end{lstlisting}
By convention, if $D$~is empty, then $d=0$. At each step of the algorithm, the equality $h=q_{1}h_{1}+\cdots+q_{p}h_{p} + h' +r$ holds while $\mdeg(h')$ decreases.\end{divalgorithm}

Note that in the case of a valuation ring, the gcd $d$ is an $\LC(h_{j_{0}})$ dividing all the $\LC(h_{j})$, and the Bézout identity may be given by setting $c_{j_{0}}=1$ and $c_{j}=0$ for~$j\neq j_{0}$: see Algorithm~\ref{def222}.

\subsection{The S-polynomial algorithm}\label{sec:s-polyn-algor}

This algorithm takes also place in Context~\ref{context} for $\R$. Note however that the divisibility test is not used here; only the zero test is used.
This algorithm is a key tool for constructing a Gröbner
basis and has been introduced by Buchberger~\cite{Buchbe0} for the case
where the base ring is a discrete field. It has the following goal.
 \[\makebox[\textwidth]{%
 \begin{tabular}{rl}
   \lstinline|Input|&$f,g \in \Hm\setminus\{0\}$.\\
   \lstinline|Output|&the S-polynomial given by $b\uX^{\beta}$ and $a\uX^{\alpha}$ as $\rS(f,g)=b\uX^{\beta}f - a\uX^{\alpha}g$:\\
&\enskip if $f=g$, then $b\uX^{\beta}=b$ is a generator of~$\Ann(\LC(f))$ and $a\uX^{\alpha}=0$;\\
                    &\enskip otherwise, if $\LM(f)=\uX^{\mu} e_{i}$ and  $\LM (g)=\uX^{\nu} e_{i}$, then $\rS(f,g)=$\\
                    &\enskip $b\uX^{(\nu-\mu)^{+}}f -a\uX^{(\mu-\nu)^{+}}g$ with $b\LC(f)=a\LC(g)$, $\gcd(a,b)=1$;\\
                    &\enskip otherwise, $\rS(f,g)=0$.
 \end{tabular}}
\]
Here $\alpha^{+}=(\max(\alpha_{1},0),\dots,\max(\alpha_{n},0))$ is the \emph{positive part} of~$\alpha\in\ZZ^{n}$.

\begin{spolyalgorithm}\label{genSalg}
  \leavevmode
\begin{lstlisting}
local variables $a,b:\R$, $\mu,\nu:\NN^{n}$;
if $f = g$ then(*@\label{genSalg:1}@*)
  find $b$ such that $\Ann(\LC(f))=\gen{b}$; (*@\label{genSalg:2}@*)
  $\rS(f,f) \gets b{}f$ (*@\label{genSalg:3}@*)
else(*@\label{genSalg:4}@*)
  if $\LP(f) \ne \LP(g)$ then
    $\rS(f,g) \gets 0$
  else
    $\mu \gets \mdeg(f)$; $\nu \gets \mdeg(g)$;
    find $a,b$ such that(*@\label{genSalg:11}@*)
      $\gcd(a,b)=1,$
      $a\gcd(\LC(f),\LC(g))=\LC(f),$
      $b\gcd(\LC(f),\LC(g))=\LC(g)$;(*@\label{genSalg:12}@*)
    $\rS(f,g) \gets b\uX^{(\nu-\mu)^{+}}f - a\uX^{(\mu-\nu)^{+}}g$
  fi
fi(*@\label{genSalg:15}@*)
\end{lstlisting}
\end{spolyalgorithm}
Note the following important properties of~$\rS(f,g)$:
\begin{itemize}
\item If $\LM (f)=\uX^{\mu} e_{i}$ and  $\LM (g)=\uX^{\nu} e_{i}$, then either $\rS(f,g)=0$ or
$\LM(\rS(f,g))< \uX^{\sup(\mu,\nu)}e_{i}$; if $\LP(f) \ne \LP(g)$, then $\rS(f,g)=0$;
\item  $\rS(\uX^{\delta} f,\uX^{\delta} g)=\uX^{\delta}\rS(f,g)$  for all $\delta \in \mathbb{N}^{n}$.
\end{itemize}

$\rS(f,f)$ is called the \emph{auto-S-polynomial} of $f$. It is
designed to produce cancellation of the leading term of $f$ by
multiplying $f$ with a generator of the annihilator of~$\LC (f)$. If the leading
coefficient of $f$ is regular, then $\rS(f,f)=0$ as in
the discrete field case. In case $\R$ is a domain, this algorithm is
not supposed to compute auto-S-polynomials and we can remove
lines~\ref{genSalg:1}--\ref{genSalg:4} and~\ref{genSalg:15}: if
nevertheless executed with $f=g$, it yields $\rS(f,f)=0$.

The S-polynomial $\rS(f,g)$ is designed to produce cancellation of the leading terms of~$f$ and~$g$. It is worth pointing out that $\rS(f,g)$ is not uniquely determined (up to a unit) when $\R$ has nonzero zerodivisors. Also $\rS(g,f)$ is generally not equal (up to a unit) to $\rS(f,g)$ (in the discrete field case, this ambiguity is taken care of by making the S-polynomial monic). These issues are repaired through the consideration of the auto-S-polynomials $\rS(f,f)$ and $\rS(g,g)$.

Note that in the case of a valuation ring, the computation of the coefficients $a,b$ is particularly easy: see Algorithm~\ref{def222Spaire}.

\subsection{Buchberger's algorithm}\label{sec:buchb-algor}

This algorithm takes place in Context~\ref{context} for $\R$. Here coherence, strictness, and the divisibility test are used.
Concerning the termination of the algorithm, see Section~\ref{case:Bezout-rings}.

This algorithm has the following goal.
 \[
 \begin{tabular}{rl}
   \lstinline|Input|&$g_{1},\dots,g_{p} \in \Hm\setminus \so0$.\\
   \lstinline|Output|&a Gröbner basis $\lst{g_{1},\dots,g_{p},\dots,g_{t}}$ for $\gen{g_{1},\dots,g_{p}}$.
 \end{tabular}
\]

\begin{buchbalgorithm}\label{genBuchbergeralg}
  \leavevmode
\begin{lstlisting}
local variables $S:\Hm$, $i,j,u:\NN$;
$t\gets p$;
repeat
  $u\gets t$;
  for $i$ from $1$ to $u$ do
    for $j$ from $i$ to $u$ do
      $S\gets\overline{\rS(g_{i},g_{j})}^{\lst{g_{1},\dots,g_{u}}}$ by Algorithms  (*@\ref{genSalg}@*) and (*@\ref{gendivalg}@*);(*@\label{genBuchbergeralg:7}@*)
      if $S\neq 0$ then
        $t\gets t+1$;
        $g_{t}\gets S$
      fi
    od
  od
until $t=u$
\end{lstlisting}
\end{buchbalgorithm}

This algorithm is almost the same algorithm as in the case where the base ring is a discrete field.  The modifications are in the definition of S-polynomials, in the consideration of the auto-S-polynomials, and in the division of terms (see Item~\eqref{mononmial-module-1} of Definition~\ref{mononmial-module}). In line~\ref{genBuchbergeralg:7}, the algorithm  may be sped up by computing the remainder w.r.t.\ $\lst{g_{1},\dots,g_{t}}$ instead of $\lst{g_{1},\dots,g_{u}}$ only.

\begin{remark} \label{remBuchb1}
If the algorithm terminates, then we can transform the obtained  Gröbner basis into a Gröbner basis $\lst{{g'_{1}},\dots,g'_{t'}}$ such that no term of an element~$g'_{j}$ lies in $\gen{\,\LT(g'_{k})\mathrel;k\ne j\,}$ by replacing each element of the Gröbner basis with a remainder of it on division by the other nonzero elements and
by repeating this process until it stabilises.  Such a Gröbner basis is called a \emph{pseudo-reduced Gröbner basis}.
\end{remark}

\subsection{The syzygy algorithm for terms}\label{sec:syzygy-algorithm}

This algorithm takes also place in Context~\ref{context} for $\R$.  Note however that the divisibility test is not used here; only the zero test is used.
It has the following goal.
 \[\makebox[\textwidth]{%
 \begin{tabular}{rl}
   \lstinline|Input|&terms $T_{1},\dots,T_{p}\in\Hm$.\\
   \lstinline|Output|
   &a generating system~$\lst{\rS_{i,j}}_{1\leq i\leq j\leq p,\LP(T_{j}) = \LP(T_{i})}$ for $\Syz(T_{1},\dots,T_{p})$.
 \end{tabular}}
\]

In this algorithm, $(\epsilon_{1},\dots,\epsilon_{p})$ is the canonical basis of $\R[\uX]^{p}$.

\begin{syzalgorithm}\label{gensyzygyalg}
  \leavevmode
\begin{lstlisting}
local variables $i,j:\so{1,\dots,p}$, $J:\text{subset of }\so{1,\dots,p}$,
                $a,b:\R$, $\alpha,\beta:\NN^{n}$;
for $i$ from $1$ to $p$ do
  $J\gets\sotq{j\geq i}{\LP(T_{j}) = \LP(T_{i})}$;
  for $j$ in $J$ do
    compute $b\uX^{\beta},a\uX^{\alpha}$ such that $\rS(T_{i},T_{j})=b\uX^{\beta}T_{i} - a\uX^{\alpha}T_{j}$
            by Algorithm (*@\ref{genSalg}@*);
    $\rS_{i,j}\gets b\uX^{\beta}\epsilon_{i}-a\uX^{\alpha}\epsilon_{j}$
  od
od
\end{lstlisting}
\end{syzalgorithm}

\subsection{Schreyer's syzygy algorithm}\label{sec:Syzygy-algorithm}

This algorithm takes also place in Context~\ref{context} for~$\R$.
It has the following goal.
 \[\makebox[\textwidth]{%
 \begin{tabular}{rl}
   \lstinline|Input|&a Gröbner basis $\lst{g_{1},\dots,g_{p}}$ for a submodule of~$\Hm$.\\
   \lstinline|Output|&a Gröbner basis~$\lst{u_{i,j}}_{1\leq i\leq j\leq p,\LP(g_{j}) = \LP(g_{i})}$ for $\Syz(g_{1},\dots, g_{p})$\\
         &\enskip w.r.t.\ Schreyer's monomial order induced by~$>$ and
         \(\lst{g_{1},\dots,g_{p}}\).
 \end{tabular}}
\]

In this algorithm, $(\epsilon_{1},\dots,\epsilon_{p})$ is the canonical basis of~$\R[\uX]^{p}$. 

\begin{Syzalgorithm}\label{genSyzygyalg}\leavevmode
\begin{lstlisting}
local variables $i,j:\so{1,\dots,p}$, $J:\text{subset of }\so{1,\dots,p}$,
                $a,b:\R$, $\alpha,\beta:\NN^{n}$, $q_{\ell}:\R[\uX]$;
for $i$ from $1$ to $p$ do
  $J\gets\sotq{j\geq i}{\LP(g_{j}) = \LP(g_{i})}$;
  for $j$ in $J$ do
    compute $b\uX^{\beta},a\uX^{\alpha}$ such that $\rS(g_{i},g_{j})=b\uX^{\beta}g_{i} - a\uX^{\alpha}g_{j}$
            by Algorithm (*@\ref{genSalg}@*);
    compute $q_{1},\dots,q_{p}$ such that (*@\label{genSyzygyalg:16}@*)
      $\rS(g_{i},g_{j})=q_{1}g_{1}+\dots+q_{p}g_{p}$ by Algorithm (*@\ref{gendivalg}@*) (*@\textrm{(note that}@*)
        $\LM(\rS(g_{i},g_{j})) \geq \LM(q_{\ell}) \LM(g_{\ell})\textrm{ whenever }q_{\ell}\ne0)$; (*@\label{genSyzygyalg:18}@*)
    $u_{i,j}\gets b\uX^{\beta}\epsilon_{i}-a\uX^{\alpha}\epsilon_{j}-q_{1}\epsilon_{1}-\dots-q_{p}\epsilon_{p}$
  od(*@\label{gensyzygyalg:19}@*)
od
\end{lstlisting}
\end{Syzalgorithm}

The polynomials $q_{1},\dots,q_{p}$ of lines~\ref{genSyzygyalg:16}--\ref{genSyzygyalg:18} may have been computed while constructing the Gröbner basis.

\begin{remark} \label{remSyzgen}
For an arbitrary system of generators $\lst{h_{1},\dots,h_{r}}$ for a  submodule $U$ of~$\Hm$, the syzygy module of $\lst{h_{1},\dots,h_{r}}$ is easily obtained from the syzygy module of a Gröbner basis for $U$ (see \cite[Theorem~296]{Y5}).
\end{remark}

\section{The
algorithms in the case of a valuation ring}\label{sec:case-valuation-rings}

This is the case of a local Bézout ring. We consider a coherent valuation ring $\V$ with a divisibility test.
In this case, we get simplified versions of the algorithms given in Section~\ref{sec:generic-algorithms}.
We recover  the algorithms given in~\cite{Ye2,Y5}, but for modules instead of ideals. In particular, we generalise
Buchberger's algorithm to convenient
 valuation rings and modules.
Note that the algorithm given in~\cite{Ye2}   contains a bug which is
corrected in the corrigendum~\cite{Y2} to the papers~\cite{HY,Ye2}.

\begin{divalgorithm}[see {\cite[Definition~226]{Y5}}]\label{def222}
  Let $\V $ be a valuation ring with a divisibility test.
  In the Division algorithm~\ref{gendivalg}, instead of defining the set~$D$ and finding the gcd~$d$, one may look out for the first~$\LT(h_{i})$ such that $\LT(h_{i})$ divides~$\LT(h')$; in case of success, the algorithm proceeds with this index~$i$, and the Bézout identity of line~\ref{gendivalg:5} is not needed.

\begin{lstlisting}
local variables $i:\so{1,\dots,p}$, $c:\R$, $h':\H_{m}$, notdiv${}:{}$boolean;
$q_{1} \gets 0$; ... ; $q_{p} \gets 0$; $r \gets 0$; $h' \gets h$;
while $h' \ne 0$ do
  $i \gets 1$;
  notdiv := true;
  while $i \leq p$ and notdiv do
    if $\LT(h_{i}) \divides \LT(h')$ then
      find $c$ such that $c \LC(h_{i}) = \LC(h')$;
      $q_{i} \gets q_{i} + c(\LM(h')/\LM(h_{i}))$;
      $h' \gets h' - c(\LM(h')/\LM(h_{i})) h_{i}$;
      notdiv := false
    else
      $i \gets i + 1$
    fi
  od;
  if notdiv then
    $r \gets r + \LT(h')$;
    $h' \gets h' - \LT(h')$
  fi
od
\end{lstlisting}
\end{divalgorithm}

\begin{spolyalgorithm}[see {\cite[Definition~229]{Y5}}]\label{def222Spaire}
  Let $\V$ be a coherent valuation ring.  We define the \emph{S-polynomial} of two nonzero vectors in~$\Hm$ by the S-polynomial algorithm~\ref{genSalg}. In this algorithm, the finding of $a,b$ in lines~\ref{genSalg:11}-\ref{genSalg:12} will take the following simple form, typical for valuation rings:
\begin{lstlisting}[numbers=none]
    find $a,b$ such that
      $a\LC(g)=b\LC(f)$ (*@\textrm{with $a=1$ or $b=1$}@*)
\end{lstlisting}
    This does not rely on the divisibility test: the explicit disjunction ``$a$ divides $b$ or $b$  divides $a$'' is sufficient.
When we have a divisibility test, the following expression arises for $\rS(f,g)$ with $f\neq g$, $\LP(f)=\LP(g)$, $\mdeg(f)=\mu$, $\mdeg(g)=\nu$:
\[\makebox[\textwidth]{$
  \rS(f,g)=
    \begin{cases}
    \hphantom b\uX^{(\nu-\mu)^{+}} f-a\uX^{(\mu-\nu)^{+}}g & \text{if $\LC(g)\divides\LC(f)$, where $\LC(f)=a\LC(g)$} \\
     b\uX^{(\nu-\mu)^{+}}f-\hphantom a\uX^{(\mu-\nu)^{+}}g & \text{otherwise, where $b\LC(f)=\LC(g)$.} \\
   \end{cases}$}
 \]
Note also that the annihilator $\Ann(\LC(f))$ appearing in the computation of the auto-S-polynomial is principal because $\V$ is a
coherent valuation ring: there is a $b$ such that $\Ann(\LC(f))=b\V$ ($b$~being defined up to a unit, see~\cite[Exercise IX-7]{ACMC}).
\end{spolyalgorithm}

\begin{example}[S-polynomials over {$\R=\mathbb{F}_{2}[Y]/\gen{Y^{r}}$}, $r\geq 2$, a generalisation of {\cite[Example~231]{Y5}}]\label{F2y}
The ring $\V=\mathbb{F}_{2}[Y ]/\gen{Y^{r}}= \mathbb{F}_{2}[y]$
(where $y = \overline Y $) is a zero-dimensional coherent  valuation ring
with nonzero zerodivisors ($\Ann(y^{k}) = \gen{y^{r-k}}$). Each nonzero element $a$ of this ring may be written in a unique way as $y^{k}(1+y b)$ with $k=0,\dots,r-1$ and $1+yb$ a unit.
Let $f \neq  g \in \V[\uX]\setminus \so{0}$ and $ \mu = \mdeg(f) $,
$\nu = \mdeg(g)
$.
If $\LC(g)=y^{k}(1+y b)$ and $\LC(f)=y^{\ell}(1+y c)$, then
\[
  \begin{aligned}
  \rS(f,g)&=
    \begin{cases}
    \uX^{(\nu-\mu)^{+}} f-(1+yc)(1+yb)^{-1}y^{\ell-k}\uX^{(\mu-\nu)^{+}}g & \text{if $k\leq \ell$} \\
    (1+yb)(1+yc)^{-1}y^{k-\ell}\uX^{(\nu-\mu)^{+}}f-\uX^{(\mu-\nu)^{+}}g & \text{if $k>\ell$} \\
   \end{cases}\\&\underset{\makebox[0pt]{\tiny up to a unit }}=\quad
    \begin{cases}
    (1+yb)\uX^{(\nu-\mu)^{+}} f-(1+yc)y^{\ell-k}\uX^{(\mu-\nu)^{+}}g & \text{if $k\leq \ell$} \\
    (1+yb)y^{k-\ell}\uX^{(\nu-\mu)^{+}}f-(1+yc)\uX^{(\mu-\nu)^{+}}g & \text{if $k>\ell$}. \\
   \end{cases}
 \end{aligned}
 \]
For the computation of the auto-S-polynomial~$\rS(f, f)$, two cases may
arise:
\begin{itemize}
\item If $\LC (f)$ is a unit, then $\rS(f, f) = 0$.
\item If $\LC (f)$ is $y^{k}$ ($k>0$) up to a unit, then $\rS(f, f) = y^{r-k}f$.
\end{itemize}
E.g., with $r=2$, using the lexicographic order for which $X_{2}>X_{1}$ and considering the polynomials $f=yX_{2}+X_{1}$ and
$g=yX_{1}+y$, we have:
\[\rS(f,g)=X_{1}f-X_{2}g=X_{1}^{2}+yX_{2},\enspace
  \rS(f,f)=yf=yX_{1},\enspace
  \rS(g,g)=yg=0\text.
\]
\end{example}

\section{Termination of Buchberger's algorithm for a Bézout ring}\label{case:Bezout-rings}

The following lemma provides a necessary and sufficient condition for a term to belong to a module generated by terms over a coherent strict Bézout ring with a divisibility test.

\begin{lemma}[Term modules, see~{\cite[Lemma~227]{Y5}}]\label{lemdivis}
Let \(\R\) be a coherent strict Bézout ring with a divisibility test. Let
\(U\) be a submodule of
\(\Hm\) generated by a finite collection of terms \(a_{\alpha}\uX^{\alpha} e_{i_{\alpha}}\) with \(\alpha \in A\).
A
term \(b \uX^{\beta} e_{r}\) lies in \(U\) iff there is a nonempty subset~$A'$ of~$A$ such that \(\uX^{\alpha} e_{i_{\alpha}}\) divides \(\uX^{\beta} e_{r}\) for every~$\alpha\in A'$ (i.e.\ \(i_{\alpha}=r\)  and \(\uX^{\alpha} \divides \uX^{\beta}\)) and  \(\gcd_{\alpha\in A'}(a_{\alpha})\) divides~\(b\). In the local case, there hence is an~\(a_{\alpha}\) with~\(\alpha\in A'\) that divides~\(b\).
\end{lemma}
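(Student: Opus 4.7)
The plan is to prove the two implications separately, with the \((\Leftarrow)\) direction being essentially a Bézout computation and the \((\Rightarrow)\) direction relying on an analysis of which monomials in the expansion can contribute to the coefficient of \(\uX^\beta e_r\).

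For the \((\Leftarrow)\) direction, I assume a nonempty \(A'\subseteq A\) is given with \(i_\alpha = r\) and \(\uX^\alpha \divides \uX^\beta\) for all \(\alpha \in A'\), and that \(d \coloneqq \gcd_{\alpha\in A'}(a_\alpha)\) divides \(b\), say \(b = cd\). Since \(\R\) is Bézout, the finitely generated ideal \(\gentq{a_\alpha}{\alpha\in A'}\) equals \(\gen{d}\), so there exist \(u_\alpha \in \R\) with \(d = \sum_{\alpha\in A'} u_\alpha a_\alpha\). Setting \(\uX^{\gamma_\alpha}\coloneqq \uX^{\beta}/\uX^{\alpha}\) for each \(\alpha\in A'\), I obtain
\[
b\uX^\beta e_r = c \sum_{\alpha \in A'} u_\alpha a_\alpha \uX^\beta e_r = \sum_{\alpha\in A'} (c\, u_\alpha \uX^{\gamma_\alpha})\cdot a_\alpha \uX^\alpha e_{i_\alpha} \in U.
\]

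For the \((\Rightarrow)\) direction, suppose \(b \uX^\beta e_r = \sum_{\alpha \in A} p_\alpha \cdot a_\alpha \uX^\alpha e_{i_\alpha}\) for some \(p_\alpha \in \R[\uX]\). Only the indices with \(i_\alpha = r\) contribute to the \(e_r\)-component, so with \(A_r \coloneqq \sotq{\alpha \in A}{i_\alpha = r}\) I obtain the identity \(b\uX^\beta = \sum_{\alpha \in A_r} p_\alpha a_\alpha \uX^\alpha\) in \(\R[\uX]\). Writing \(p_\alpha = \sum_\delta c_{\alpha,\delta}\uX^\delta\) and equating coefficients of \(\uX^\beta\), only pairs \((\alpha,\delta)\) with \(\alpha+\delta=\beta\) contribute, which forces \(\uX^\alpha \divides \uX^\beta\). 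Setting \(A' \coloneqq \sotq{\alpha \in A_r}{\uX^\alpha \divides \uX^\beta}\), this yields \(b = \sum_{\alpha \in A'} c_{\alpha,\beta-\alpha}\, a_\alpha\), hence \(\gcd_{\alpha \in A'}(a_\alpha)\) divides \(b\). As \(b \ne 0\), the set \(A'\) must be nonempty.

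The local case follows from the general observation that in a valuation ring the gcd of a finite list of elements coincides, up to a unit, with one of its entries (by induction on the list, using that any two elements are comparable for divisibility); that entry is then an \(a_\alpha\) dividing \(b\). The only delicate point is the constructive identification of \(A'\), but this rests only on the decidable monomial divisibility in \(\R[\uX]\) together with the divisibility test in \(\R\), both available by hypothesis; neither coherence nor strictness of the Bézout ring is needed here, only the plain Bézout property.
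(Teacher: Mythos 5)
Your proof is correct and follows essentially the same route as the paper's: the forward direction is a Bézout combination, and the reverse direction extracts the coefficient of $\uX^{\beta}e_{r}$ from a representation of $b\uX^{\beta}e_{r}$ as an $\R[\uX]$-linear combination of the generators, giving $b$ as an $\R$-linear combination of the $a_{\alpha}$ with $\alpha\in A'$. Two minor differences: you take $A'$ to be \emph{all} indices $\alpha$ with $i_{\alpha}=r$ and $\uX^{\alpha}\divides\uX^{\beta}$, whereas the paper restricts to those actually contributing to the coefficient; this is harmless, since adding indices can only shrink the gcd and so preserves divisibility of $b$. You also spell out the $(\Leftarrow)$ direction, which the paper merely declares ``clearly sufficient,'' and you correctly observe that only the plain Bézout property is used — a point the paper leaves implicit.
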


\begin{proof}
  The condition is clearly sufficient.  For the
  necessity, write
  \begin{equation*}
    b \uX^{\beta} e_{r} = \som_{\alpha\in\tilde A}c_{\alpha}
    a_{\alpha}\uX^{\gamma_{\alpha}}\uX^{\alpha}
    e_{i_{\alpha}}
\end{equation*}
  with $\tilde A\subseteq A$, $c_{\alpha}\in \R \setminus
  \so{0}$, and $\uX^{\gamma_{\alpha}} \in
  \mathbb{M}_{n}$. Then $b=\sum_{\alpha\in A'} c_{\alpha}
  a_{\alpha}$, where $A'$ is the set of
  those~$\alpha$ such that ${\gamma_{\alpha}}+\alpha=
  \beta$ and $i_{\alpha}=r$. For each $\alpha\in A'$,
  $\uX^{\alpha}$ divides~$\uX^{\beta}$. Since the gcd of
  the~$a_{\alpha}$'s with $\alpha\in A'$ divides
  every~$a_{\alpha}$, it also divides~$b$.\qedhere
  \end{proof}

The following lemma is a key result
for the characterisation of Gröbner bases by means of
S-polynomials: see~\cite[Chapter~2, \S6, Lemma~5]{CLO}  and, for valuation rings, \cite[Lemma 233, adding the hypothesis of coherence]{Y5}.

\begin{lemma}\label{lemm}
Let \(\R\) be a coherent strict Bézout ring and \(f_{1},\dots,f_{p} \in \Hm\setminus\{0\}\) with the same leading monomial~\(M\). Let \(c_{1},\dots,c_{p} \in \R\). If \(c_{1}f_{1}+\cdots+c_{p}f_{p}\) vanishes or has leading monomial $<M$, then
\(c_{1}f_{1}+\cdots+c_{p}f_{p}\) is a linear combination with coefficients in
\(\R\) of the S-polynomials \(\rS(f_{i},f_{j})\) with \(1\leq i\leq j\leq p\).
\end{lemma}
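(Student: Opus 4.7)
The plan is to induct on~$p$, exploiting strict Bézout to combine $f_1$ and $f_2$ into a single vector and thereby reduce to a sum with fewer terms. Set $\alpha_i\coloneqq\LC(f_i)$; the hypothesis that $c_1 f_1+\cdots+c_p f_p$ vanishes or has leading monomial $<M$ means exactly that $\sum_i c_i\alpha_i=0$. The base case $p=1$ is immediate: coherence gives $\Ann(\alpha_1)=\gen{b_1}$, where $b_1$ is the generator used in $\rS(f_1,f_1)=b_1 f_1$, so $c_1=\lambda b_1$ and $c_1 f_1=\lambda\rS(f_1,f_1)$.

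For $p\geq 2$, apply strict Bézout to $(\alpha_1,\alpha_2)$ to obtain $d,a,b,u,v\in\R$ with $\alpha_1=da$, $\alpha_2=db$, $\gcd(a,b)=1$ and $ua+vb=1$. Set $g\coloneqq uf_1+vf_2$, so that $\LC(g)=d$ and $\LM(g)=M$. Since $\rS(f_1,f_2)=bf_1-af_2$, one checks directly that $f_1=ag+v\rS(f_1,f_2)$ and $f_2=bg-u\rS(f_1,f_2)$, whence
\[
  \sum_{i=1}^{p}c_i f_i=(c_1 a+c_2 b)g+\sum_{i=3}^{p}c_i f_i+(c_1 v-c_2 u)\rS(f_1,f_2).
\]
The sum $(c_1 a+c_2 b)g+\sum_{i=3}^p c_i f_i$ is a linear combination of the $p-1$ vectors $g,f_3,\dots,f_p$, all with leading monomial $M$, and its $M$-coefficient equals $(c_1 a+c_2 b)d+\sum_{i\geq 3}c_i\alpha_i=\sum_i c_i\alpha_i=0$. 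By the inductive hypothesis it is an $\R$-combination of the S-polynomials $\rS(h,h')$ with $h,h'\in\{g,f_3,\dots,f_p\}$.

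It remains---and this is the main technical obstacle---to rewrite each such S-polynomial as an $\R$-combination of $\rS(f_i,f_j)$, $1\leq i\leq j\leq p$. The polynomials $\rS(f_i,f_j)$ with $3\leq i\leq j\leq p$ already have this form. For $\rS(g,g)=b_g g$ with $\gen{b_g}=\Ann(d)$, the identity $d=u\alpha_1+v\alpha_2$ yields $\Ann(d)=\Ann(\alpha_1)\cap\Ann(\alpha_2)=\gen{b_1}\cap\gen{b_2}$, so $b_g=\lambda_1 b_1=\lambda_2 b_2$ for some $\lambda_1,\lambda_2\in\R$, giving $\rS(g,g)=u\lambda_1\rS(f_1,f_1)+v\lambda_2\rS(f_2,f_2)$.

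Finally, consider $\rS(g,f_j)=\bar b g-\bar a f_j$ for $j\geq 3$, where $\bar e=\gcd(d,\alpha_j)$, $d=\bar a\bar e$, $\alpha_j=\bar b\bar e$, $\gcd(\bar a,\bar b)=1$. Invoke strict Bézout once more to write $\gcd(a,\bar b)=\gamma$ with $a=\gamma a^\dagger$, $\bar b=\gamma b^\dagger$, and $\gcd(b,\bar b)=\gamma'$ with $b=\gamma' b^\ddagger$, $\bar b=\gamma' b^\sharp$. Direct computation of gcds gives $\rS(f_1,f_j)=b^\dagger f_1-\bar a a^\dagger f_j$ and $\rS(f_2,f_j)=b^\sharp f_2-\bar a b^\ddagger f_j$; substituting the expressions above for $f_1,f_2$ and using $ab^\dagger=\bar b a^\dagger$ and $bb^\sharp=\bar b b^\ddagger$ yields
\[
  \rS(f_1,f_j)=a^\dagger\rS(g,f_j)+vb^\dagger\rS(f_1,f_2),\qquad \rS(f_2,f_j)=b^\ddagger\rS(g,f_j)-ub^\sharp\rS(f_1,f_2).
\]
Since $a^\dagger\mid a$, $b^\ddagger\mid b$ and $\gcd(a,b)=1$, one has $\gcd(a^\dagger,b^\ddagger)=1$; a Bézout identity $\xi a^\dagger+\eta b^\ddagger=1$ then expresses $\rS(g,f_j)$ as $\xi\rS(f_1,f_j)+\eta\rS(f_2,f_j)+(\eta u b^\sharp-\xi v b^\dagger)\rS(f_1,f_2)$, completing the induction.
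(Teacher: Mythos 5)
Your proof is correct, but it takes a genuinely different route from the paper's. The paper avoids induction: writing $\LC(f_j)=d_{i,j}a_{i,j}$ with $d_{i,j}=\gcd(\LC(f_i),\LC(f_j))$, it notes that for each permutation $(i_1,\dots,i_p)$, multiplying the given combination by $a_{i_1,i_2}\cdots a_{i_{p-1},i_p}$ and successively replacing $a_{i_k,i_{k+1}}f_{i_k}$ by $\rS(f_{i_k},f_{i_{k+1}})+a_{i_{k+1},i_k}f_{i_{k+1}}$ turns it into a combination of S-polynomials plus a term $zf_{i_p}$ with $z\LC(f_{i_p})=0$, hence a multiple of $\rS(f_{i_p},f_{i_p})$; a Bézout identity for the products $a_{i_1,i_2}\cdots a_{i_{p-1},i_p}$ is then obtained by expanding the product of the $\tbinom p2$ Bézout identities for the pairs $(a_{i,j},a_{j,i})$ and invoking Rédei's theorem that every tournament contains a Hamiltonian path. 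Your induction instead collapses $f_1,f_2$ into the single vector $g=uf_1+vf_2$ (whose leading coefficient is the gcd), reducing the number of summands by one, at the cost of the explicit gcd bookkeeping needed to re-express $\rS(g,g)$ and each $\rS(g,f_j)$ through the original $\rS(f_i,f_j)$---which you carry out correctly, relying on coherence for $\Ann(d)=\gen{b_1}\cap\gen{b_2}$ and on two more applications of strict Bézout for the $\rS(g,f_j)$ step. The paper's argument is non-inductive, stays entirely within the original $f_i$'s, and is conceptually slick at the price of one appeal to a combinatorial theorem; yours is elementary and self-contained, at the price of a heavier but routine verification.
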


\begin{proof}
  Let us write, for $j\ne i$,
  \(
  \LC(f_{j})=d_{i,j}a_{i,j}
  \)
  with $d_{i,j}=\gcd(\LC(f_{i}),\LC(f_{j}))$,
  so that $\gcd(a_{i,j},a_{j,i})=1$ and $\rS(f_{i},f_{j})=a_{i,j}f_{i}-a_{j,i}f_{j}$. For each permutation $i_{1},\dots,i_{p}$ of $1,\dots,p$,
  we shall transform the sum $a_{i_{1},i_{2}}\*\cdots a_{i_{p-1},i_{p}}\*(c_{1}f_{1}+\cdots+c_{p}f_{p})$ by replacing successively
  \[
    \begin{aligned}
      a_{i_{1},i_{2}}&f_{i_{1}}&&\text{by $\rS(f_{i_{1}},f_{i_{2}})+a_{i_{2},i_{1}}f_{i_{2}}$,}\\
      \vdots&&&\hskip6em\vdots\\
      a_{i_{p-1},i_{p}}&f_{i_{p-1}}&&\text{by $\rS(f_{i_{p-1}},f_{i_{p}})+a_{i_{p},i_{p-1}}f_{i_{p}}$.}
    \end{aligned}
  \]
  At the end, the sum will be a linear combination of $\rS(f_{i_{1}},f_{i_{2}})$, $\rS(f_{i_{2}},f_{i_{3}})$, \dots, $\rS(f_{i_{p-1}},f_{i_{p}})$, and $f_{i_{p}}$; let $z$ be the coefficient of $f_{i_{p}}$ in this combination. The sum as well as each of the S-polynomials vanish or have leading monomial $<M$, so that the hypothesis yields $z\LC(f_{i_{p}})=0$; therefore $zf_{i_{p}}$ is a multiple of $\rS(f_{i_{p}},f_{i_{p}})$.

  It remains to obtain a Bézout identity w.r.t.\ the products $a_{i_{1},i_{2}}\cdots a_{i_{p-1},i_{p}}$, because it yields an expression of $c_{1}f_{1}+\cdots+c_{p}f_{p}$ as a
  linear combination of the required
  form. For this, it is enough to develop the product of the
  $\tbinom s2$~Bézout identities w.r.t.~$a_{i,j}$ and~$a_{j,i}$,
  $1\leq i<j\leq p$: this yields a sum of products of
  $\tbinom s2$~terms, each of which is either~$a_{i,j}$ or~$a_{j,i}$,
  $1\leq i<j\leq p$, so that it is indexed by the tournaments on the
  vertices~$1,\dots,p$; every such product contains a product of the above
  form~$a_{i_{1},i_{2}}\cdots a_{i_{p-1},i_{p}}$ because every tournament
  contains a hamiltonian path (see
  \cite{redei35}). \end{proof}

\begin{remark}
  The above proof results from an analysis of the following proof in the case where $\R$ is local and $m=1$, which entails in fact the general case.
  Since $\R$ is a
valuation ring, we may consider a permutation $i_{1},\dots,i_{p}$ of $1,\dots,p$ such that $\LC(f_{i_{p}}) \divides \LC(f_{i_{p-1}}) \divides
\cdots \divides \LC(f_{i_{1}})$. Thus $\rS(f_{i_{1}},f_{i_{2}})= f_{i_{1}} - a_{i_{2},i_{1}}f_{i_{2}}$, \dots, $\rS(f_{i_{p-1}},f_{i_{p}})=f_{i_{p-1}}-a_{i_{p},i_{p-1}}f_{i_{p}}$ for
some~$a_{i_{2},i_{1}},\dots,a_{i_{p},i_{p-1}}$. Then, by replacing successively $f_{i_{k}}$ by $\rS(f_{i_{k}},f_{i_{k+1}})+a_{i_{k+1},i_{k}}f_{i_{k+1}}$, the linear combination $c_{1}f_{1}+\dots+c_{p}f_{p}$ may be rewritten as a linear combination of $\rS(f_{i_{1}},f_{i_{2}})$, \dots, $\rS(f_{i_{p-1}},f_{i_{p}})$, and $f_{i_{p}}$, with the coefficient of~$f_{i_{p}}$ turning out to lie in $\Ann(\LC(f_{i_{p}}))$.
\end{remark}

Lemma~\ref{lemm} enables us to generalise some
classical results on the existence and characterisation of
Gröbner bases  to the case of coherent
strict Bézout  rings with a divisibility test. See~\cite[Theorem 234]{Y5} for the case of valuation rings and ideals.

\begin{theorem}[Buchberger's criterion for Gröbner bases]\label{thm1}
  Let \(\R\) be a coherent
 strict Bézout ring with a divisibility test and
 \(U = \gen{g_{1},\dots,g_{p}}\) a nonzero submodule of~\(\Hm\). Then \(G =
\lst{g_{1},\dots,g_{p}} \) is a Gröbner basis for \(U\) iff the remainder of~\(\rS(g_{i},g_{j})\) on division by
\(G\) vanishes for
all pairs \(i \leq j\). \end{theorem}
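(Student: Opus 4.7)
The proof splits into the two implications, with the reverse direction carrying the weight.

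For the forward implication, I would assume $G$ is a Gröbner basis and fix indices $i \leq j$. Let $r = \overline{\rS(g_{i},g_{j})}^{G}$. The division algorithm's invariant gives $\rS(g_{i},g_{j}) - r \in \gen{g_{1},\dots,g_{p}} = U$, so $r \in U$. If $r \ne 0$, then $\LT(r) \in \LT(U) = \gen{\LT(G)}$ by the Gröbner basis property; but the output specification of Algorithm~\ref{gendivalg} says no term of $r$ lies in $\gen{\LT(G)}$, a contradiction. Hence $r = 0$.

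For the reverse implication, the plan is to show that every nonzero $f \in U$ satisfies $\LT(f) \in \gen{\LT(G)}$. Among all representations $f = h_{1}g_{1}+\cdots+h_{p}g_{p}$ I would consider the \emph{height} $M(h_{\bullet}) \coloneqq \max_{k}\LM(h_{k}g_{k})$, and, using that monomial orders are well-orders (via Dickson's lemma), pick a representation minimising $M$, with minimum value $M$. The goal becomes: $M = \LM(f)$. Suppose instead $M > \LM(f)$. Let $S = \{k : \LM(h_{k}g_{k}) = M\}$ and write $h_{k} = c_{k}\uX^{\alpha_{k}} + (\text{lower})$ for $k \in S$. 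The coefficient of $M$ in $f$ equals $\sum_{k \in S} c_{k}\LC(g_{k})$, which must vanish, so $z \coloneqq \sum_{k \in S} c_{k}\uX^{\alpha_{k}}g_{k}$ has leading monomial strictly below $M$.

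Now set $f_{k} \coloneqq \uX^{\alpha_{k}}g_{k}$ for $k \in S$; these share the common leading monomial $M$. Lemma~\ref{lemm} applies and expresses $z$ as an $\R$-linear combination of the S-polynomials $\rS(f_{k},f_{\ell})$ with $k \leq \ell$ in $S$. The pivotal identity is $\rS(f_{k},f_{\ell}) = \uX^{\delta_{k,\ell}}\rS(g_{k},g_{\ell})$ for suitable $\delta_{k,\ell}$: writing $\LM(g_{k}) = \uX^{\mu_{k}}e_{\iota}$ with $\alpha_{k}+\mu_{k} = \alpha_{\ell}+\mu_{\ell}$, the factors $\uX^{(\mu_{\ell}-\mu_{k})^{+}}$ and $\uX^{(\mu_{k}-\mu_{\ell})^{+}}$ from the S-polynomial formula absorb the $\uX^{\alpha_{k}}$ and $\uX^{\alpha_{\ell}}$ into a common multiplier, and the same works for auto-S-polynomials since $\LC(f_{k})=\LC(g_{k})$. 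Using the hypothesis $\overline{\rS(g_{k},g_{\ell})}^{G}=0$, Algorithm~\ref{gendivalg} supplies $\rS(g_{k},g_{\ell}) = \sum_{m}q_{k,\ell,m}g_{m}$ with $\LM(q_{k,\ell,m}g_{m}) \leq \LM(\rS(g_{k},g_{\ell}))$; multiplying by $\uX^{\delta_{k,\ell}}$ and using that $\LM(\rS(f_{k},f_{\ell})) < M$ yields $\rS(f_{k},f_{\ell}) = \sum_{m}\tilde q_{k,\ell,m}g_{m}$ with every $\LM(\tilde q_{k,\ell,m}g_{m}) < M$.

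Assembling these expressions gives $z = \sum_{m}\tilde q_{m}g_{m}$ with $\LM(\tilde q_{m}g_{m}) < M$. Substituting this for the top part of the original representation produces a new representation $f = \sum_{k}h'_{k}g_{k}$ with $M(h'_{\bullet}) < M$, contradicting minimality. So $M = \LM(f)$, whence $\LT(f) = \sum_{k \in S}c_{k}\uX^{\alpha_{k}}\LT(g_{k}) \in \gen{\LT(G)}$, proving that $G$ is a Gröbner basis. The principal obstacle is the bookkeeping verifying $\rS(f_{k},f_{\ell}) = \uX^{\delta_{k,\ell}}\rS(g_{k},g_{\ell})$ uniformly (including auto-S-polynomials), together with ensuring that all invocations of Lemma~\ref{lemm} are legitimate in the non-domain coherent strict Bézout setting.
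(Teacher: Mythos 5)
Your proof is correct. The paper gives no written proof for this theorem — after Lemma~\ref{lemm} it simply remarks that the lemma lets one transplant the classical argument and points to \cite[Theorem~234]{Y5} — and your write-up is precisely that standard Buchberger-criterion argument (easy direction from the specification of the division algorithm; hard direction by descending induction on the height of a representation, using Lemma~\ref{lemm} to rewrite the top-degree part as a combination of S-polynomials and the vanishing remainders to push the height down), with the bookkeeping identity $\rS(\uX^{\alpha_k}g_k,\uX^{\alpha_\ell}g_\ell)=\uX^{\delta_{k,\ell}}\rS(g_k,g_\ell)$ checked correctly including the auto-S-polynomial case.
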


Theorem~\ref{thm1} entails that Buchberger's algorithm~\ref{genBuchbergeralg} constructs a Gröbner basis for finitely generated ideals of coherent valuation rings with a divisibility test when such a basis exists (compare~\cite[Algorithm 235]{Y5}). The two following theorems provide a general explanation for the termination of Buchberger's algorithm and are therefore pivotal.

\begin{theorem}[Termination of Buchberger's algorithm, case $m=1$]\label{Grob-Buch} Let \(\V\) be a coherent
 valuation ring with a divisibility test,
\(I\) a nonzero finitely generated ideal of \(\V[\uX]\), and
\(>\) a monomial order on \(\V[\uX]\). If \(\LT(I)\) is finitely generated, then Buchberger's algorithm~\ref{genBuchbergeralg} computes a finite Gröbner basis for~\(I\).
\end{theorem}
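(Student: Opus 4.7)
The plan is to establish two things: first, that if Buchberger's algorithm ever halts, its output is a Gröbner basis for~$I$; second, that under the finite-generation hypothesis on~$\LT(I)$ it must halt in finitely many steps. For the first point, I would invoke Buchberger's criterion (Theorem~\ref{thm1}): the exit condition of the repeat-loop is precisely that every S-polynomial $\rS(g_{i},g_{j})$ reduces to zero modulo the current basis, and the ideal generated by the basis is preserved throughout the algorithm because each added~$g_{t}$ is a $\V[\uX]$-linear combination of the basis elements held at the moment of its computation. Thus the final basis generates~$I$ and satisfies the criterion, making it a Gröbner basis for~$I$.

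For termination, I would argue by an ascending chain argument. The third clause of the division algorithm's specification guarantees that no term of a nonzero remainder $S=\overline{\rS(g_{i},g_{j})}^{\lst{g_{1},\dots,g_{u}}}$ lies in $\gen{\LT(g_{1}),\dots,\LT(g_{u})}$; in particular $\LT(g_{t})\notin\gen{\LT(g_{1}),\dots,\LT(g_{t-1})}$, so each addition strictly enlarges the monomial ideal $\gen{\LT(g_{1}),\dots,\LT(g_{t})}$ inside the fixed finitely generated ideal~$\LT(I)$. Assume for contradiction that infinitely many~$g_{t}$ are added. Writing $\LT(I)=\gen{a_{1}\uX^{\alpha_{1}},\dots,a_{k}\uX^{\alpha_{k}}}$, the local case of Lemma~\ref{lemdivis} associates to each~$g_{t}$ an index $j(t)\in\so{1,\dots,k}$ with $a_{j(t)}\divides\LC(g_{t})$ and $\uX^{\alpha_{j(t)}}\divides\uX^{\mdeg(g_{t})}$. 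A pigeonhole argument yields an infinite subsequence on which $j(t)=j_{0}$ is constant; Dickson's lemma applied to the exponents $\mdeg(g_{t})$ then extracts a further infinite subsequence $g_{t_{1}},g_{t_{2}},\dots$ along which the exponents are weakly increasing in the componentwise order on~$\NN^{n}$. Along this subsequence, the strictness condition $\LT(g_{t_{i+1}})\notin\gen{\LT(g_{t_{\ell}}):\ell\leq i}$ combined with Lemma~\ref{lemdivis} forces $\LC(g_{t_{\ell}})\ndivides\LC(g_{t_{i+1}})$ for each $\ell\leq i$. Since~$\V$ is a valuation ring this means $\LC(g_{t_{i+1}})$ strictly divides each previous $\LC(g_{t_{\ell}})$, so the principal ideals $\bigl(\LC(g_{t_{i}})\bigr)$ form an infinite strictly ascending chain in~$\V$, all contained in~$(a_{j_{0}})$.

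The main obstacle I anticipate is ruling out such an ascending chain: in the full generality of a coherent valuation ring it need not stabilise (for instance when the value group of~$\V$ is dense in~$\mathbb{R}$). I expect the contradiction to come from the provenance of the $\LC(g_{t_{i}})$'s as leading coefficients of S-polynomial remainders, which constrains them via the Bézout identities in lines~\ref{genSalg:11}--\ref{genSalg:12} of Algorithm~\ref{genSalg} and the subsequent divisions in Algorithm~\ref{gendivalg} to be built in a controlled way from the leading coefficients of the input $g_{1},\dots,g_{p}$ and the generators $a_{1},\dots,a_{k}$ of~$\LT(I)$. Along the lines of the treatment in~\cite{Y5} for ideals over valuation rings, a careful analysis should show that the leading coefficients produced by the algorithm lie in a subset of~$\V$ whose principal ideals do satisfy the required chain condition, thereby completing the argument and yielding the termination of the procedure.
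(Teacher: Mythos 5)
Your approach is genuinely different from the paper's, and it contains a gap that you yourself flag but do not close. The paper argues by \emph{descent}: it fixes a finite generating system $\LT(I)=\langle\LT(g_1),\dots,\LT(g_r)\rangle$, writes each $g_k$ as a combination $\sum_{i\in E} h_i f_i$ of the current basis elements, associates to that expression the \emph{multidegree} $\gamma=\sup_i\mdeg(\LM(h_i)\LM(f_i))$, and shows via Lemma~\ref{lemm} that whenever $\mdeg(g_k)<\gamma$ the expression can be rewritten, after adjoining the relevant S-polynomials to the generating set, with strictly smaller $\gamma$. Well-ordering of the monomials terminates the descent, after which $\LT(g_k)\in\langle\LT(\text{current basis})\rangle$ for every $k$, so the stopping condition of Algorithm~\ref{genBuchbergeralg} is reached. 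You instead try an \emph{ascending chain} on the partial leading-term modules $\langle\LT(g_1),\dots,\LT(g_t)\rangle\subsetneq\langle\LT(g_1),\dots,\LT(g_{t+1})\rangle$ inside $\LT(I)$.

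The gap in your argument is precisely where you anticipate it: the strictly ascending chain of principal ideals $(\LC(g_{t_1}))\subsetneq(\LC(g_{t_2}))\subsetneq\cdots\subseteq(a_{j_0})$ that you extract need not stabilise in a coherent valuation ring, even with $\LT(I)$ finitely generated as hypothesised. Take, for instance, a valuation domain whose value group is $\mathbb{Q}$; then one can have $(a_{j_0}b_1)\subsetneq(a_{j_0}b_2)\subsetneq\cdots$ with all terms strictly contained in $(a_{j_0})$, and the finite generation of $\LT(I)$ (in the sense of Lemma~\ref{lemdivis}: some $a_j$ divides each $\LC(g_t)$ in the right monomial fibre) does not by itself exclude this. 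Your hope that ``the provenance of the $\LC(g_{t_i})$'s as leading coefficients of S-polynomial remainders'' will force a chain condition is not substantiated, and I do not see how to complete it without essentially redoing the paper's multidegree-descent argument. There is also a constructive-style concern: the paper works in Bishop-style constructive mathematics, and your argument opens with ``assume for contradiction that infinitely many $g_t$ are added,'' which establishes termination only classically; the paper avoids this by an explicit descent bounded by well-ordering of the monomials. In short, your first paragraph (correctness via Theorem~\ref{thm1}) is fine, but your termination argument is incomplete and would need to be replaced by, or reduced to, the descent argument of the paper.
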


\begin{proof}
  Let $f_{1},\dots,f_{p}\in\V[\uX]\setminus\{0\}$ be generators of~$I$. Let $ \LT(I)=\langle\LT(g_{1}),\allowbreak\dots,\LT(g_{r})\rangle$  with $g_{i} \in I\setminus\{0\}$.
Let $1 \leq k \leq r$.  As $g_{k} \in I$, there exist~$E\subseteq\{1,\dots,p\}$ and $h_i\in \V[\uX]\setminus\{0\}$, $i\in E$, such that
\begin{equation}
g_{k}=\som_{i\in E} h_{i}  f_{i}\label{eqlospol}
\end{equation}
with $\mdeg(g_{k})\leq \sup_{i\in E}(\mdeg(M_{i}N_{i}))\eqqcolon\gamma$ (we call it the \emph{multidegree of the expression}~\eqref{eqlospol}
for~$g_{k}$ w.r.t.\ the generating set $\so{f_{1},\dots,f_{p}}$ of $I$), where $M_{i}=\LM (h_{i})$ and $N_{i}=\LM (f_{i})$. Let $F=\{\,i\in E\mathrel;\mdeg(M_{i}N_{i})=\gamma\,\}$.

\begin{enumerate}[label=Case~\arabic*:]

\item $\mdeg(g_{k})= \gamma$, say  $\mdeg(g_{k})= \mdeg(M_{i_{0}}N_{i_{0}})$
for some $i_{0}\in F$. As the leading coefficients of the
$h_{i}f_{i}$'s with $i\in F$ are
comparable w.r.t.\ division, we can suppose that all of them are
divisible by the leading coefficient of
 $h_{i_{0}}f_{i_{0}}$. It follows that $\LT (g_{k}) \in \gen{\LT(f_{i_{0}})} \subseteq \gen{\LT(f_{1}),\dots,\LT(f_{p})}$.

\item $\mdeg(g_{k}) <\gamma$. We have
\[
  \begin{aligned}
    g_{k}&=\som_{i\notin F}h_{i}f_{i}+\som_{i\in F}h_{i}f_{i}\\
    &=\som_{i\notin F}h_{i}f_{i}+\som_{i\in F}(h_{i}-\LT (h_{i}))f_{i}+\som_{i\in F}\LT (h_{i})f_{i}.
\end{aligned}
\]
Letting $c_{i}=\LC (h_{i})$, we get
\[
  \mdeg\big(\som_{i\in F}c_{i} M_{i}f_{i}\big) < \gamma.
\]
By virtue of Lemma~\ref{lemm}, there exists a finite family $(a_{i,j})$ of
elements of~$\V$ such that
\[
  \som_{i\in F}c_{i} M_{i}f_{i}= \som_{i\leq j \in F}a_{i,j} \rS(M_{i}f_{i},M_{j}f_{j}).
\]
But, for $i\leq j \in F$, letting $N_{i,j}=\lcm(N_{i},N_{j})$ and writing $\rS(f_{i},f_{j})=a \frac{N_{i,j}}{N_{i}}f_{i}+b \frac{N_{i,j}}{N_{j}}f_{j}$ for some $a,\,b \in \V$,
we have $\rS(M_{i}f_{i},M_{j}f_{j})= a \frac{X^{\gamma}}{M_{i}N_{i}}M_{i}f_{i}+b \frac{X^{\gamma}}{M_{j}N_{j}}M_{j}f_{j}=\frac{X^{\gamma}}{N_{i,j}}\, \rS(f_{i},f_{j})$. It follows that
\[
  \som_{i\in F}c_{i} M_{i}f_{i}
=\som_{i\leq j \in F}a_{i,j} m_{i,j} \rS(f_{i},f_{j}),
\]
where the $m_{i,j}$'s  are monomials. Thus we obtain another expression for $g_{k}$,
\[\makebox[\textwidth]{$
    \displaystyle g_{k}=\som_{i\notin F}h_{i}f_{i}+\som_{i\in F}(h_{i}-\LT (h_{i}))f_{i}+\som_{i\leq j \in F}a_{i,j} m_{i,j} \rS(f_{i},f_{j})\text,
$}\]
and the multidegree of this expression, now w.r.t.\ the generating set of~$I$ obtained by adding the elements $\rS(f_{i},f_{j})$, $i \leq j \in F$, to the $f_{1},\dots,f_{p}$, 
is  $ < \gamma$. Reiterating this, we end up with a situation like that of Case 1 for all the $g_{k}$'s because the set of monomials is
well-ordered. So we reach the termination condition in Algorithm~\ref{genBuchbergeralg} after a finite number of steps.\qedhere
\end{enumerate}
\end{proof}

\begin{theorem}[Termination of Buchberger's algorithm]\label{Grob-Buch2} Let \(\R\) be a coherent  strict Bézout ring
with a divisibility test and
\(U \) a nonzero finitely generated submodule of~\(\Hm\). If \(\LT(U)\) is finitely generated, then Buchberger's algorithm~\ref{genBuchbergeralg} computes a Gröbner basis for~\(U\).
\end{theorem}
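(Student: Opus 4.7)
The plan is to imitate the proof of Theorem~\ref{Grob-Buch} almost verbatim, with two modifications. First, the valuation-ring step in Case~1 (``leading coefficients are comparable w.r.t.\ division, hence one of them generates the others'') will be replaced by a direct appeal to Lemma~\ref{lemdivis}, which is tailored to the coherent strict Bézout setting with divisibility test. Second, leading positions must be tracked throughout so as to stay inside~$\Hm$ rather than in~$\V[\uX]$; this is essential to ensure that the S-polynomials appearing in the descent are nonzero in the sense of Algorithm~\ref{genSalg}.

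Concretely, I fix generators $f_{1},\dots,f_{p}\in\Hm\setminus\{0\}$ of~$U$ and pick $g_{1},\dots,g_{r}\in U\setminus\{0\}$ with $\LT(g_{1}),\dots,\LT(g_{r})$ generating~$\LT(U)$. For each~$k$, I write $g_{k}=\sum_{i\in E}h_{i}f_{i}$ with $h_{i}\ne 0$, set $M_{i}=\LM(h_{i})$, $N_{i}=\LM(f_{i})$, let $M=\uX^{\gamma}e_{\ell}$ be the largest monomial appearing among the $M_{i}N_{i}$, and put $F=\sotq{i\in E}{M_{i}N_{i}=M}$. All $f_{i}$ with $i\in F$ then share the leading position~$\ell$, which legitimises the S-polynomial arguments below.

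In Case~1 ($\LM(g_{k})=M$), the identity $\LC(g_{k})=\sum_{i\in F}\LC(h_{i})\LC(f_{i})$ shows that $\gcd_{i\in F}(\LC(f_{i}))$ divides $\LC(g_{k})$; since each $N_{i}$ ($i\in F$) also divides~$M$, Lemma~\ref{lemdivis} yields $\LT(g_{k})\in\gen{\LT(f_{1}),\dots,\LT(f_{p})}$. In Case~2 ($\LM(g_{k})<M$), the sum $\sum_{i\in F}\LC(h_{i})M_{i}f_{i}$ has leading monomial strictly below~$M$ while its summands share the leading monomial~$M$; Lemma~\ref{lemm} then rewrites it as a linear combination of S-polynomials $\rS(M_{i}f_{i},M_{j}f_{j})$, and the usual identity $\rS(M_{i}f_{i},M_{j}f_{j})=(\uX^{\gamma}/\lcm(N_{i},N_{j}))\rS(f_{i},f_{j})$ (valid because the $f_{i}$'s with $i\in F$ share a leading position) turns this into a combination of the $\rS(f_{i},f_{j})$ themselves. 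Adjoining these S-polynomials to the generating system gives a new expression for~$g_{k}$ whose multidegree is strictly below~$\gamma$.

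Well-orderedness of~$>$ on~$\mathbb{M}_{n}^{m}$ forces the descent to terminate after finitely many iterations, so eventually every~$g_{k}$ satisfies Case~1 with respect to the enlarged basis, which is therefore a Gröbner basis for~$U$; Theorem~\ref{thm1} then guarantees that Algorithm~\ref{genBuchbergeralg} halts. I expect the main difficulty to be the careful bookkeeping of leading positions, in particular checking that Lemma~\ref{lemm} applies unchanged to the family $\lst{M_{i}f_{i}}_{i\in F}$ (whose members share the common leading monomial~$M$ in~$\Hm$) and that the S-polynomial identity pulling out $\uX^{\gamma}/\lcm(N_{i},N_{j})$ survives intact in the module setting.
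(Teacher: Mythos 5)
Your proposal is correct, and the argument you sketch does go through, but it takes a different route from the paper's. The paper does \emph{not} re-run the descent argument in the Bézout setting: its proof of Theorem~\ref{Grob-Buch2} declares that ``it suffices to prove the result when $\R$ is local and $m=1$'' (i.e., Theorem~\ref{Grob-Buch}) and then explains how to pass from the local to the global case dynamically---when two leading coefficients $a,b$ are incomparable, write $a=\gcd(a,b)\,a'$, $b=\gcd(a,b)\,b'$, and observe that the single S-polynomial $b'\uX^{(\nu-\mu)^{+}}f-a'\uX^{(\mu-\nu)^{+}}g$ is, up to a unit, what the valuation-ring algorithm would produce in each of the comaximal localizations $\R[1/a']$ and $\R[1/b']$, so that no actual branching is required. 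Your proof instead re-derives the result by adapting the descent of Theorem~\ref{Grob-Buch} directly to the coherent strict Bézout module setting, and the two modifications you identify are exactly the right ones: Case~1's ``one leading coefficient divides the others'' step becomes an application of the easy direction of Lemma~\ref{lemdivis} via the identity $\LC(g_k)=\sum_{i\in F}\LC(h_i)\LC(f_i)$, and tracking positions is precisely what legitimises using Lemma~\ref{lemm} on $\lst{M_if_i}_{i\in F}$ and the identity $\rS(M_if_i,M_jf_j)=\bigl(\uX^{\gamma}/\lcm(N_i,N_j)\bigr)\rS(f_i,f_j)$ inside $\Hm$. The two ``main difficulties'' you flag at the end do check out: Lemma~\ref{lemm} is already stated for modules over a coherent strict Bézout ring, and writing $M_i=\uX^{\gamma-\nu_i}$ one computes $\uX^{\gamma-\sup(\nu_i,\nu_j)}\,\uX^{(\nu_j-\nu_i)^{+}}=M_i$ (and symmetrically for~$j$), so the S-polynomial identity carries over verbatim. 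Your version is more self-contained---in particular it fills in the $m>1$ bookkeeping that the paper treats as routine---while the paper's dynamical reduction is terser and exhibits the valuation-to-Bézout dictionary that underlies the whole section.
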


\begin{proof} It suffices to prove the result when $\R$ is local and $m=1$, in which case this is Theorem~\ref{Grob-Buch}. Let us explain in a few words how to pass from the local to the global case (compare~\cite[Section~3.3.11]{Y5} and~\cite{GY}). Suppose that we are computing $\rS(f,g)$ and that the
leading coefficients $a$ and $b$ of $f$ and $g$ are uncomparable under division. A key fact is
that if we write $a = \gcd(a,b)\,a'$, $b = \gcd(a, b)\,b'$ with
$\gcd(a',b')=1$, then $a$ divides $b$ in
$\R[\frac{1}{a'}]$, $b$ divides $a$ in
$\R[\frac{1}{b'}]$, and the two multiplicative subsets
$a'^{\mathbb{N}}$ and $b'^{\mathbb{N}}$ are comaximal
because $1\in \gen{a',b'}$. Then $\R$
splits into $\R[\frac{1}{a'}]$ and
$\R[\frac{1}{b'}]$, and we can continue as if $\R$
were a valuation ring. If $\mdeg(f)= \mu$ and $\mdeg(g)=\nu$, then $\rS(f,g)$ is
being computed as follows:
\begin{itemize}
\item in the ring $\R[\frac{1}{b'}]$, $\rS(f,g)=
X^{(\nu-\mu)^{+}}f- \frac{a'}{b'}
X^{(\mu-\nu)^{+}}g\eqqcolon S_{1}$;
\item in the ring $\R[\frac{1}{a'}]$, $\rS(f,g)=
  \frac{b'}{a'}X^{(\nu-\mu)^{+}}f-
X^{(\mu-\nu)^{+}}g\eqqcolon S_{2}$.
\end{itemize}
But, letting $\rS\coloneqq b'X^{(\nu-\mu)^{+}}f- a'
X^{(\mu-\nu)^{+}}g$, we have
\[
\rS=b'S_{1}=a'S_{2}\text.\]
As $\rS$ is equal to $S_{1}$ up to a unit
in  $\R[\frac{1}{b'}]$, and to  $S_{2}$ in
$\R[\frac{1}{a'}]$, it can replace both of them, and thus
there was no need to open the two branches
$\R[\frac{1}{a'}]$ and $\R[\frac{1}{b'}]$.
\end{proof}

\subsection*{When is a valuation ring a Gröbner ring?}\label{sec:valgrob}
\addcontentsline{toc}{subsection}{When is a valuation ring a Gröbner ring?}

We recall here some results given in \cite{Y5} on the interplay between the concepts of Gröbner ring, Krull dimension, and archimedeanity; here are the relevant definitions. 

\begin{definition}\leavevmode
  \begin{itemize}
  \item The (Jacobson) \emph{radical} $\Rad(\R)$ of an arbitrary ring
    $\R$ is the ideal $\sotq{a\in\R}{1+a\R\subseteq \R^{\times}}$,
    where $\R^{\times}$~is the unit group of $\R$.
  \item The \emph{residual field} of a local ring
    $\R$ is the quotient $\R/\Rad(\R)$. The local ring~$\R$ is \emph{residually discrete} if its residual field is discrete: this means that $x\in\R^{\times}$ is decidable.
    A nontrivial local ring~$\R$ is residually discrete iff it is the disjoint union of $\R^{\times}$ and $\Rad(\R)$.
  \item A residually discrete valuation  ring $\R$ is \emph{archimedean} if
\[
  \forall a,b \in \Rad(\R)\setminus \so{0}\ \exists k\in \NN\ \ a\divides b^{k}\text.
\]
\item A strongly discrete ring~$\R$ is a \emph{Gröbner ring} if for every $n \in \mathbb{N}$ and every finitely generated ideal~$I$ of $\R[\uX]$ endowed with the lexicographic monomial order, the module~$\LT (I)$ is finitely generated as well.
\end{itemize}
\end{definition}

One sees easily that a Gröbner ring is coherent (\cite[Proposition~224]{Y5}).
Moreover if $\R$ is Gröbner, then so is $\R[Y]$.

For a coherent valuation ring with a divisibility test, it is proved in \cite{Y5} that archimedeanity is equivalent to being a Gröbner ring  (at least when we assume that there is no nonzero zerodivisor or there exists a nonzero zerodivisor, see \cite[Theorem~272]{Y5}). For a valuation domain with a divisibility test, it is proved that the condition is equivalent to having Krull dimension~$\leq 1$ (\cite[Theorem~256]{Y5}). This implies that a strongly discrete Prüfer domain is Gröbner iff it has Krull dimension~${\leq 1}$ (\cite[Corollary~6]{Y4}). This applies to Bézout domains with a divisibility test. When a coherent valuation ring with a divisibility test has a nonzero zerodivisor, it is proved that 
archimedeanity is equivalent to being zero-dimensional (\cite[Proposition~265]{Y5}).

Let us now, for the comfort of the reader, provide simple arguments for some of these results. 
Recall that a ring $\V$ has Krull dimension~$\leq 1$ if, given $a,b\in\R$,
\begin{equation}
  \exists k,\ell\in\NN\ \exists x,y\in\R\ \ b^{\ell}(a^{k}(ax-1)+by)=0\text;\label{(*)}
\end{equation}
when $b$ is regular and $a\in\Rad(\V)$, we get that $a^k=zb$ for some $k$ and some $z$. 
This shows that a valuation domain of Krull dimension~$\leq 1$ is archimedean.
Conversely, an equality $a^k=zb$ is a particularly simple case of \eqref{(*)} (take $x=0$). Also, when $a$ is invertible, one has $ax-1=0$ for some~$x$, which is also a form of \eqref{(*)}. So, if in a local ring the disjunction ``$x$ is invertible or $x\in\Rad(\V)$'' is explicit (i.e.\ if the residual field is discrete), then archimedeanity implies Krull dimension~$\leq 1$.   
Summing up, an archimedean valuation ring with a divisibility test has Krull dimension~$\leq 1$, and a valuation domain with Krull dimension~$\leq 1$
is archimedean: so a valuation domain is archimedean iff it has Krull dimension~$\leq 1$.

Recall now that for a local ring, being zero-dimensional means that every element is invertible or nilpotent. Let us consider a valuation ring with a divisibility test containing a nonzero zerodivisor~$x$.  We have $xy=0$ with $y\neq 0$. If $x=yz$, then $y^2z=0$, so that $x^2=0$. If $y=xz$, then $y^2=0$. So we have a nonzero nilpotent element $u$. In this case archimedeanity is equivalent to being zero-dimensional.  Indeed, assume first archimedeanity. For an $a\in\Rad(\V)$, we have $u\divides a^k$, so $a^{2k}=0$. Then assume zero-dimensionality. For any $a,b\in\Rad(\V)$, we have a~$k$ such that $a^k=0$, so $b\divides a^k$.

So, for a coherent valuation ring with a divisibility test, if $0$ is the unique zerodivisor, archimedeanity is equivalent to having  dimension $\leq 1$, and if $\V$ has a nonzero zerodivisor, archimedeanity is equivalent to being zero-dimensional.

Now assume that $\V$ is a coherent valuation ring with a divisibility test.
We first compute $(c:d)$ when $c,d\neq 0$. We note that $(c:d)=\gen{u}$
for some $u$ (since it is finitely generated). If $c\divides d$, then $(c:d)=\gen{1}$.
If $d\divides c$, then we have a $y$ with $c=dy$. So $y\in\gen{u}$, say $y=tu$. Since $u\in(c:d)$, we have a $z$ with $du=cz=dyz=dutz$. So $du(1-tz)=0$. If $1-tz$ is invertible, then $du=0$, so that $c=dy=dut=0$, which is impossible. So $tz$ is invertible and $\gen{u}=\gen{y}$: more precisely $u=yt'$ with $t'$ invertible.

Now let $a,b\in\Rad(\V)\setminus\{0\}$. We show that $(b:a^\infty)$ is finitely generated iff
$b\divides a^k$ for some~$k$. If $a^k=bx$ then $(b:a^k)=\gen{1}$, so $(b:a^\infty)=\gen{1}$. If $(b:a^\infty)$ is finitely generated, then we have a~$k$ such that $(b:a^k)=(b:a^{k+1})$. If $b\divides a^k$ or $b\divides a^{k+1}$, then we are done.
The other case ($a^k\divides b$ and $a^{k+1}\divides b$) is impossible, for if we have $x,y$ such that $b=a^kx=a^{k+1}y=a^k(ay)$, then
$$\gen{y}=(b:a^{k+1})=(b:a^k)=\gen{x}=\gen{ay},
$$ 
so that $y=uay$ and $(1-ua)y=0$ for some $u$; since $a\in\Rad(\V)$, $1-ua$ is invertible, so that $y=0$,
which implies $b=0$, a contradiction.

We have shown that $\V$ is archimedean iff $(b:a^\infty)$ is finitely generated for all $a,b\in\Rad(\V)\setminus\{0\}$. 

We note also that for an arbitrary commutative ring $\R$, one has  
$$\forall a,b\in\R\;\gen{1+b\+Y,a}\cap\R= (b:a^\infty)\text. 
$$
So a coherent valuation ring~$\V$ with a divisibility test is archimedean iff the ideal $\gen{1+b\+Y,a}\cap\V$ is finitely generated for all $a,b\in\R$.
This condition is fulfilled as soon as $\V$ is $1$-Gröbner (i.e.\ satisfies the definition of Gröbner rings with $n=1$).

For other details on this topic see \cite[Exercise 372 p.~207, solution p.~221, Exercise 387 p.~218, solution p.~251]{Y5}.

\section{The syzygy theorem and Schreyer's algorithm for a valuation  ring}\label{sec5}

In the book \emph{Gröbner bases in commutative algebra}, Ene and Herzog propose the following exercise.

\begin{problem}[{\cite[Problem 4.11, p.~81]{EH}}]\label{claim-false}
  Let $>$ be a    monomial order on the free $S$-module $F=\bigoplus_{j=1}^{m}Se_{j}$ [where $S=\K[\uX]$
with $\K$ a discrete field], let $U\subset F$ be a submodule of $F$, and suppose that $ \LT (U)=\bigoplus_{j=1}^{m}I_{j}e_{j}$. Show that $U$ is a free $S$-module iff $I_{j}$ is a principal ideal for $j=1,\dots,m$.
\end{problem}

 It is obvious that this condition is sufficient. Unfortunately, it is not necessary as shows the following example, so that
 the statement of~\cite[Problem 4.11]{EH} is not correct.

 \begin{example}\label{counter}
   Let $>$ be a   TOP   monomial order on  $\K[\X,\X[2]]^{2}$ for which $\X[2]>\X$, $\K$ being a field, let $e_{1}=(1,0)$ and $e_{2}=(0,1)$, and consider the free submodule~$U$   of $\K[\X,\X[2]]^{2}$ generated by $u_{1}=(\X[2],\X)$ and $u_{2}=(\X,0)$. Then $\LT (u_{1})=\X[2]e_{1}$, $\LT (u_{2})=\X e_{1}$, $\rS(u_{1},u_{2})= \X u_{1}-\X[2]u_{2}=\X^{2}e_{2} \eqqcolon u_{3}$, and $\rS(u_{1},u_{3})=\rS(u_{2},u_{3})=0$. It follows that  $(u_{1},u_{2},u_{3})$ is a Gröbner basis for $U$, and  $ \LT (U)=\gen{\X[2],\X}e_{1}\oplus \gen{\X^{2}}e_{2}$. One can see that  $\gen{\X[2],\X}$ is not principal and $\LT(U)$ is not free, while $U$ is free.
\end{example}

So we content ourselves with the following observation.

\begin{remark}\label{claim-rem}
  Let $>$ be a    monomial order on the free $S$-module $F=\bigoplus_{j=1}^{m}Se_{j}$, where $S=\V[\uX]$ and
$\V$ is a valuation domain. Let $U$ be a submodule of $F$ and suppose that $ \LT (U)=\bigoplus_{j=1}^{m}I_{j}e_{j}$, where $I_{j}$ is a principal ideal for $j=1,\dots,m$. Then $\LT(U)$ and $U$ are free $S$-modules. (Of course, this is not true anymore if $\V$ is a valuation ring with nonzero zerodivisors. Consider e.g.\ the ideal $U=\gen{8X+2}$ in  $(\ZZ /16\+\ZZ )[X]$: we have $ \LT (U)=\gen{2}$ (so that it is principal), but $U$ is not free since $8\+U=\gen{0}$.)
\end{remark}

We shall need the following proposition, which generalises \cite[Theorem 291]{Y5} to the case of modules.

\begin{proposition}[Generating set for the  syzygy module of a list of terms for a coherent valuation ring]\label{valuation1}
  Let \(\V\) be a coherent valuation ring,  \(\Hm\)  a free \(\V[\uX]\)-module with basis \((e_{1},\dots,e_{m})\),
  and terms~\(T_{1},\dots,T_{p}\) in~\(\Hm\). Considering the canonical basis \((\epsilon_{1},\dots,\epsilon_{p})\) of \(\V[\uX]^{p}\), the syzygy module \(\Syz(T_{1},\dots,
T_{p})\) is generated by the
\[
  \rS_{i,j}\in \V[\uX]^{p}\text{ with \(1\leq i \leq j \leq p\) and \(\LP(T_{i})=\LP(T_{j})\),}
\]
as computed by the Syzygy algorithm for terms~\ref{gensyzygyalg}.
\end{proposition}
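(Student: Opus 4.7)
The plan is to prove the proposition by first reducing to the case where all $T_i$ share a common leading position and then inducting on a leading-monomial parameter of the syzygy. For the reduction, observe that if $\sigma=(q_1,\dots,q_p)$ is a syzygy, then since $(e_1,\dots,e_m)$ is a $\V[\uX]$-basis of $\Hm$ the equation $\sum q_iT_i=0$ splits across positions: for each $\ell$, restricting $\sigma$ to those $i$ with $\LP(T_i)=\ell$ yields a syzygy $\sigma^{(\ell)}$, and $\sigma=\sum_\ell\sigma^{(\ell)}$. Since Algorithm~\ref{gensyzygyalg} produces $\rS_{i,j}$ only when $\LP(T_i)=\LP(T_j)$, it is enough to treat the case where all $T_i=c_i\uX^{\alpha_i}e$ share the same basis vector~$e$.

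In this case I introduce the parameter
\[
  M(\sigma)\coloneqq\max\bigl\{\LM(q_ic_i)\+\uX^{\alpha_i}:q_ic_i\ne0\bigr\}
\]
(with $M(\sigma)=-\infty$ if all $q_ic_i=0$) and induct on $M(\sigma)$, which is licit since the monomial order is a well-ordering. For the base case $M(\sigma)=-\infty$, every coefficient of~$q_i$ lies in the ideal $\Ann(c_i)=\langle b_i\rangle$ (principal by coherence), so $q_i=b_iq_i'$ and $\sigma=\sum_iq_i'\rS_{i,i}$. For the inductive step, write $M(\sigma)=\uX^\beta$, $\gamma_i=\beta-\alpha_i$, and $I=\{i:\LM(q_ic_i)\+\uX^{\alpha_i}=\uX^\beta\}$. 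Extracting the $\uX^\beta$-coefficient of the identity $\sum q_ic_i\uX^{\alpha_i}=0$ forces $\sum_{i\in I}q_{i,\gamma_i}c_i=0$ and hence $|I|\ge 2$ (since $q_{i,\gamma_i}c_i\ne0$ whenever $i\in I$). The valuation property supplies $i_0\in I$ with $c_{i_0}\divides c_i$ for every $i\in I$; writing $c_i=\lambda_ic_{i_0}$ and $\sum_{i\in I}q_{i,\gamma_i}\lambda_i=rb_{i_0}$ (where $\Ann(c_{i_0})=\langle b_{i_0}\rangle$), the candidate cancelling combination is
\[
  \tau=\sum_{i\in I\setminus\{i_0\}}q_{i,\gamma_i}\+\uX^{\delta_i}\+\wi\rS_i+r\+\uX^{\gamma_{i_0}}\rS_{i_0,i_0},
\]
where $\delta_i\coloneqq\beta-\max(\alpha_i,\alpha_{i_0})$ and $\wi\rS_i=\pm\rS_{\min(i,i_0),\max(i,i_0)}$.

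A direct expansion, using the coordinate-wise identities $\delta_i+(\alpha_{i_0}-\alpha_i)^+=\gamma_i$ and $\delta_i+(\alpha_i-\alpha_{i_0})^+=\gamma_{i_0}$ together with $rb_{i_0}=\sum_{i\in I}q_{i,\gamma_i}\lambda_i$ and $\lambda_{i_0}=1$, collapses $\tau$ to $\sum_{i\in I}q_{i,\gamma_i}\uX^{\gamma_i}\epsilon_i$. Then $\sigma-\tau$ is a syzygy in which the leading monomial of each $q_ic_i\uX^{\alpha_i}$ with $i\in I$ is strictly less than $\uX^\beta$, so $M(\sigma-\tau)<M(\sigma)$ and the inductive hypothesis applies. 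The main obstacle will be the coordinate-wise exponent bookkeeping behind the two identities above and the positivity $\delta_i\ge 0$; once these are verified the whole construction runs automatically. Coherence of~$\V$ is what makes the annihilators principal, and the valuation property is what provides the linear order by divisibility needed to single out~$i_0$; both hypotheses play an essential role.
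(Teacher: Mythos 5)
The paper itself gives no proof of Proposition~\ref{valuation1}: it simply records that the statement generalises \cite[Theorem~291]{Y5} to the module case and moves on. So the only comparison possible is against the standard argument that one would extract from that reference. Against that yardstick, your proof is essentially correct and follows precisely the expected route: split the syzygy by leading position using that $(e_1,\dots,e_m)$ is a basis (which is exactly why the algorithm only produces $\rS_{i,j}$ when $\LP(T_i)=\LP(T_j)$), then, within a single position, induct on a well-ordered leading-monomial parameter, using the $\rS_{i,i}$ to dispose of the part of each $q_i$ that annihilates~$c_i$ (base case) and the mixed $\rS_{i,i_0}$ together with $\rS_{i_0,i_0}$ to strip off the leading coefficients (inductive step). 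The coordinate-wise exponent identities $\delta_i+(\alpha_{i_0}-\alpha_i)^+=\gamma_i$, $\delta_i+(\alpha_i-\alpha_{i_0})^+=\gamma_{i_0}$ and the positivity $\delta_i\ge0$ (which follows from $\gamma_i,\gamma_{i_0}\ge0$) all check out. Coherence enters exactly where you say, to make $\Ann(c_i)$ principal, and the valuation hypothesis enters to single out $i_0$.

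One small imprecision worth flagging: you implicitly assume that for each $i\ne i_0$ the algorithm outputs $\rS_{\min(i,i_0),\max(i,i_0)}$ in the form $\pm\bigl(\uX^{(\alpha_{i_0}-\alpha_i)^+}\epsilon_i-\lambda_i\uX^{(\alpha_i-\alpha_{i_0})^+}\epsilon_{i_0}\bigr)$ with your $\lambda_i$. The algorithm (Algorithm~\ref{genSalg} refined by~\ref{def222Spaire}) fixes $a,b$ only up to a choice, and when $c_i$ and $c_{i_0}$ are associates it may instead return $\pm\bigl(B_i\uX^{(\alpha_{i_0}-\alpha_i)^+}\epsilon_i-A_i\uX^{(\alpha_i-\alpha_{i_0})^+}\epsilon_{i_0}\bigr)$ with $A_i=1$ and $B_i\in 1+\Ann(c_{i_0})$, or with $A_i$ differing from your $\lambda_i$ by an element of $\Ann(c_{i_0})$. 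This does not break the argument: in a coherent valuation ring $\Ann(c_{i_0})$ is a proper ideal of the local ring~$\V$, so $B_i$ is a unit and can be divided out, and whatever discrepancy remains in the $\epsilon_{i_0}$-coefficient lies in $\Ann(c_{i_0})=\langle b_{i_0}\rangle$ and is absorbed by adjusting the coefficient of $\rS_{i_0,i_0}$. You should make this adjustment explicit rather than silently assuming the algorithm's $a,b$ coincide with the $\lambda_i$ you chose, but once that is done the collapse of $\tau$ to $\sum_{i\in I}q_{i,\gamma_i}\uX^{\gamma_i}\epsilon_i$ and the strict decrease of $M(\sigma-\tau)$ are exactly as claimed.
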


Note that in the Syzygy algorithm for terms~\ref{gensyzygyalg}, the $a,b$ will be found as in
the S-polynomial algorithm~\ref{def222Spaire}, so that we get
\begin{equation}
  \rS_{i,j}=
  \begin{cases}
    b\epsilon_{i}&\text{if $i=j$, where $\gen{b}=\Ann(\LC(T_{i}))$,}\\
    \uX^{\beta}\epsilon_{i}-a\uX^{\alpha}\epsilon_{j} & \text{if $i<j$ and $\LC(T_{i})=a\LC(T_{j})$, else} \\
     b\uX^{\beta}\epsilon_{i}-\uX^{\alpha}\epsilon_{j} & \text{if $i<j$ and  $b\LC(T_{i})=\LC(T_{j})$.}
   \end{cases}\label{eq:1}
\end{equation}
Here $\beta=(\mdeg(T_{j})-\mdeg(T_{i}))^{+}$ and $\alpha=(\mdeg(T_{i})-\mdeg(T_{j}))^{+}$.

Now we shall follow closely \foreignlanguage{german}{Schreyer}'s ingenious proof~\cite{Sc} of Hilbert's syzygy theorem via Gröbner bases, but with a valuation ring instead of a field. \foreignlanguage{german}{Schreyer}'s proof is very well explained in~\cite[\S\S~4.4.1--4.4.3]{EH}.

\begin{theorem}[Schreyer's algorithm for a coherent valuation ring with a divisibility test]\label{schvaluation1}
  Let \(\V\) be a coherent valuation ring with a divisibility test. Let \(U\) be a submodule of \(\Hm\) with Gröbner basis \(\lst{g_{1},\dots,g_{p}}\).  Then the relations~\(u_{i,j}\) computed by \foreignlanguage{german}{Schreyer}'s syzygy algorithm~\ref{genSyzygyalg} form a Gröbner basis for the syzygy module \(\Syz(g_{1},\dots, g_{p})\) w.r.t.\ \foreignlanguage{german}{Schreyer}'s monomial order induced by~\(>\) and \(\lst{g_{1},\dots,g_{p}}\). Moreover, for \(1 \leq i \leq j \leq p\) such that \(\LP(g_{i})=\LP(g_{j})\),
\begin{equation}\label{sch}
\LT (u_{i,j})=
\begin{cases}
    b\epsilon_{i} & \text{if \(i=j\), with \(\gen{b}=\Ann(\LC(g_{i}))\),}\\
    \uX^{\beta}\epsilon_{i} & \text{if \(i<j\) and \(\LC(g_{j})\divides\LC(g_{i})\), else} \\
    b\uX^{\beta}\epsilon_{i} & \text{if \(i<j\) and \(b\LC(g_{i})=\LC(g_{j})\),}
\end{cases}
\end{equation}
with \(\beta=(\mdeg(g_{j})-\mdeg(g_{i}))^{+}\).
\end{theorem}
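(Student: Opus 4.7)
The plan is to follow \foreignlanguage{german}{Schreyer}'s classical proof closely, adapting each step to the valuation ring setting. I would first verify the leading-term formula~\eqref{sch}, then use it together with Lemma~\ref{lemdivis} to establish the Gröbner-basis property.

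For~\eqref{sch}, I start from $u_{i,j}=b\uX^{\beta}\epsilon_{i}-a\uX^{\alpha}\epsilon_{j}-q_{1}\epsilon_{1}-\dots-q_{p}\epsilon_{p}$ as produced by Algorithm~\ref{genSyzygyalg}. Setting $\mu_{\ell}=\mdeg(g_{\ell})$ and $k=\LP(g_{i})$, the two ``S-polynomial monomials'' $\uX^{\beta}\epsilon_{i}$ and $\uX^{\alpha}\epsilon_{j}$ correspond in \foreignlanguage{german}{Schreyer}'s order to the same $\Hm$-monomial $\uX^{\sup(\mu_{i},\mu_{j})}e_{k}$, so the ``smaller index wins'' tiebreak gives $\uX^{\beta}\epsilon_{i}>\uX^{\alpha}\epsilon_{j}$ (and in the auto case $i=j$, $u_{i,i}$ has only the term $b\epsilon_{i}$ on the S-polynomial side). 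Algorithm~\ref{gendivalg} guarantees $\LM(q_{\ell})\LM(g_{\ell})\leq\LM(\rS(g_{i},g_{j}))$ whenever $q_{\ell}\neq 0$, and cancellation of leading terms in~$\rS(g_{i},g_{j})$ forces $\LM(\rS(g_{i},g_{j}))<\uX^{\sup(\mu_{i},\mu_{j})}e_{k}$---including the auto case, where $\LM(b\+g_{i})<\LM(g_{i})$ because $b\LC(g_{i})=0$. Hence each $q_{\ell}\epsilon_{\ell}$ sits strictly below $\uX^{\beta}\epsilon_{i}$ in \foreignlanguage{german}{Schreyer}'s order, and reading off the leading coefficient from the valuation-ring S-polynomial formulas of Algorithm~\ref{def222Spaire} yields~\eqref{sch} exactly.

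For the Gröbner-basis property, I pick any nonzero $s=\sum_{\ell}h_{\ell}\epsilon_{\ell}\in\Syz(g_{1},\dots,g_{p})$ and show $\LT(s)\in\gen{\LT(u_{i,j})}$ via Lemma~\ref{lemdivis}. Write $\LT(s)=c\uX^{\gamma}\epsilon_{i}$, set $k=\LP(g_{i})$, and let $E\coloneqq\sotq{\ell}{\LM(h_{\ell})\LM(g_{\ell})=\uX^{\gamma+\mu_{i}}e_{k}}$. By \foreignlanguage{german}{Schreyer}'s order $i=\min E$, and every $\ell\in E$ satisfies $\LP(g_{\ell})=k$ and $\mu_{\ell}\leq\gamma+\mu_{i}$. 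Extracting the coefficient of $\uX^{\gamma+\mu_{i}}e_{k}$ in the syzygy relation $\sum h_{\ell}g_{\ell}=0$ yields $\sum_{\ell\in E}\LC(h_{\ell})\LC(g_{\ell})=0$. Since $\V$ is a valuation ring, the $\LC(g_{\ell})$ for $\ell\in E$ are totally ordered by divisibility, and I split on whether some $\ell_{0}\in E\setminus\{i\}$ has $\LC(g_{\ell_{0}})\divides\LC(g_{i})$. If so, $\LT(u_{i,\ell_{0}})=\uX^{\beta_{i,\ell_{0}}}\epsilon_{i}$ with $\beta_{i,\ell_{0}}=(\mu_{\ell_{0}}-\mu_{i})^{+}\leq\gamma$, so $\LT(u_{i,\ell_{0}})$ divides $\LT(s)$ at once. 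Otherwise $\LC(g_{i})\divides\LC(g_{\ell})$ for all $\ell\in E\setminus\{i\}$; writing $\LC(g_{\ell})=b_{\ell}\LC(g_{i})$, the vanishing relation rearranges to $\bigl(c+\sum_{\ell\in E\setminus\{i\}}\LC(h_{\ell})b_{\ell}\bigr)\LC(g_{i})=0$, placing~$c$ in the ideal generated by a generator~$b$ of $\Ann(\LC(g_{i}))$ together with the~$b_{\ell}$; Lemma~\ref{lemdivis} applied to $\LT(u_{i,i})=b\epsilon_{i}$ and $\LT(u_{i,\ell})=b_{\ell}\uX^{\beta_{i,\ell}}\epsilon_{i}$ (whose monomials all divide $\uX^{\gamma}\epsilon_{i}$) concludes.

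The main obstacle I expect is the new ``auto-S-polynomial branch'', which has no analogue in the classical discrete-field proof: syzygies can arise from elements of $\Ann(\LC(g_{i}))$, and the diagonal elements $u_{i,i}$ of Algorithm~\ref{genSyzygyalg} are designed precisely to cover these. The case-split above absorbs them through $\LT(u_{i,i})=b\epsilon_{i}$. A secondary technical point is that $\LM(u_{i,\ell})$ actually divides $\uX^{\gamma}\epsilon_{i}$ for each $\ell\in E$; this reduces to the inequality $\mu_{\ell}\leq\gamma+\mu_{i}$, which is automatic from the definition of~$E$.
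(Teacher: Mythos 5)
Your proof is correct. The first half (verifying Equation~\eqref{sch}) is essentially identical to the paper's: both observe that $\uX^{\beta}\epsilon_{i}$ and $\uX^{\alpha}\epsilon_{j}$ map to the same leading monomial in $\Hm$ so that the tiebreak picks the smaller index $i$, and that the $q_{\ell}\epsilon_{\ell}$ lie strictly below because $\LM(q_{\ell})\LM(g_{\ell})\leq\LM(\rS(g_{i},g_{j}))<\LM(\uX^{\beta}g_{i})$. The second half takes a genuinely different, though equivalent, route: the paper extracts the ``leading slice'' $v'$ of the syzygy $v$, observes that $v'$ is a syzygy of the terms $\LT(g_{\ell})$, invokes Proposition~\ref{valuation1} to express $v'$ as a $\V[\uX]$-linear combination of the $\rS_{\ell,j}$, and then applies Lemma~\ref{lemdivis}; whereas you skip the detour through Proposition~\ref{valuation1}, extract the leading-coefficient relation $\sum_{\ell\in E}\LC(h_{\ell})\LC(g_{\ell})=0$ directly, and exploit the valuation-ring total order on divisibility to case-split (either some $\LC(g_{\ell_{0}})$ divides $\LC(g_{i})$, in which case a single $\LT(u_{i,\ell_{0}})$ already divides $\LT(s)$, or else $\LC(g_{i})$ divides everyone and the relation pushes $c$ into $\gen{b,(b_{\ell})}$), and only then call on Lemma~\ref{lemdivis}. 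In effect you re-derive inline the specific instance of Proposition~\ref{valuation1} needed here; the paper's version is more modular (Proposition~\ref{valuation1} is a standalone statement used elsewhere), while yours is more self-contained and makes the valuation-ring divisibility argument visible. A minor point worth stating explicitly: the $b_{\ell}$ you define by $\LC(g_{\ell})=b_{\ell}\LC(g_{i})$ and the one appearing in $\LT(u_{i,\ell})$ via Equation~\eqref{sch} need not be literally equal, but they differ by an element of $\Ann(\LC(g_{i}))=\gen{b}$, and since $b\epsilon_{i}=\LT(u_{i,i})$ is also among your generators the ideal $\gen{b,(b_{\ell})}$ is unaffected, so Lemma~\ref{lemdivis} applies as you claim.
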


\begin{proof}[Proof \textup{(a slight modification of the proof of~\cite[Theorem~4.16]{EH})}] Let us use the notation of \foreignlanguage{german}{Schreyer}'s sy\-zy\-gy algorithm~\ref{genSyzygyalg}. 
Let $1 \leq  i=j  \leq p$. As
$\LM(q_{\ell}) \LM(g_{\ell}) \leq \LM(\rS(g_{i}, g_{i})) < \LM (g_{i})$ whenever $q_{\ell}\ne0$, we infer that  $\LT (u_{i,i})=b\epsilon_{i}$ with $\gen{b}=\Ann(\LC(g_{i}))$.

Let $1 \leq  i <  j \leq p$ such that  $\LP(g_{i})=\LP(g_{j})$. Suppose that $\LC(g_{i})=a\LC(g_{j})$ for an~$a$: as $\LM (\uX^{\beta} g_{i})=\LM (a\uX^{\alpha} g_{j})$ and $i<j$, $\LT(\uX^{\beta}\epsilon_{i}-a\uX^{\alpha}\epsilon_{j})=\uX^{\beta}\epsilon_{i}$ w.r.t.\ \foreignlanguage{german}{Schreyer}'s monomial order induced by~\(>\), and because
$\LM(q_{\ell}) \LM(g_{\ell}) \leq \LM(\rS(g_{i}, g_{j})) < \LM (\uX^{\beta} g_{i})$ whenever $q_{\ell}\ne0$, we infer that $\LT (u_{i,j})=
\uX^{\beta}\epsilon_{i}$;
otherwise, with $b$ such that $b\LC(g_{i})=\LC(g_{j})$, we obtain similarly $\LT (u_{i,j})=
b\uX^{\beta}\epsilon_{i}$.

Let Equation~\eqref{eq:1} hold with $T_{\ell}=\LT(g_{\ell})$: then  $\LT(u_{i,j})=\LT(\rS_{i,j})$ holds for all $1\leq i\leq j\leq p$.

Let us show now that the relations $u_{i,j}$ form a Gröbner basis for the syzygy module $\Syz(g_{1},\dots, g_{p})$. For this, let $v = \sum_{\ell=1}^{p} v_{\ell} \epsilon_{\ell} \in \Syz(g_{1},\dots, g_{p})$ and let us show that there exist $1 \leq i \leq j \leq p$ with $\LP(g_{i})=\LP(g_{j})$ such that $\LT (u_{i,j})$ divides $\LT (v)$.
Let us write $\LM (v_{\ell} \epsilon_{\ell})=N_{\ell} \epsilon_{\ell}$ and $\LC (v_{\ell} \epsilon_{\ell})=c_{\ell}$ for $1 \leq \ell \leq p$. Then $\LM (v)= N_{i} \epsilon_{i}$ for some $1 \leq i \leq p$. Now let $v' = \sum_{\ell\in\mathcal{S}} c_{\ell}  N_{\ell}\epsilon_{\ell}$, where $\mathcal{S}$ is the set of those $\ell$ for which $N_{\ell}\LM (g_{\ell})= N_{i} \LM (g_{i})$. By definition of \foreignlanguage{german}{Schreyer}'s monomial order, we have $\ell \geq i$ for all $\ell \in \mathcal{S}$.
Substituting each $\epsilon_{\ell}$ in $v'$ by $T_{\ell}$, the sum becomes zero. Therefore $v'$ is a relation of the terms $T_{\ell}$ with $\ell \in \mathcal{S}$. By virtue of Proposition~\ref{valuation1}, $v'$ is an $\V[\uX]$-linear combination of the~$\rS_{\ell,j}$ with $\ell\leq j$ in $\mathcal{S}$. Taking into consideration Equation~\eqref{eq:1}, we infer, by virtue of Lemma~\ref{lemdivis}, that $\LT (v')$ is a multiple of $\LT(\rS_{i,j})$ for some $j\in\mathcal S$. The desired result follows since $\LT (v)=\LT (v')$.
\end{proof}

As a consequence of Theorem~\ref{schvaluation1}, we obtain the following constructive versions of Hilbert's syzygy theorem for a   valuation domain.

\begin{theorem}[Syzygy theorem for a valuation domain with a divisibility test]\label{hilbvaluation1}
Let \(M=\Hm/U\) be a finitely presented  \(\V[\uX]\)-module, where \(\V\) is a valuation domain with a divisibility test.
Assume that, w.r.t.\ {\color{OliveGreen}some} monomial order, $\LT(U)$ is finitely generated. Then \(M\) admits
a free \(\V[\uX]\)-resolution
\[
  0 \rightarrow F_{p} \rightarrow F_{p-1} \rightarrow \cdots \rightarrow F_{1} \rightarrow F_{0} \rightarrow M \rightarrow 0
\]
of length \(p \leq n+1\).

\end{theorem}

\begin{proof}
It suffices to prove that $U$ has
a free $\V[\uX]$-resolution of length $p \leq n$. Let $ (g_{1},\dots,g_{p})$ be a  Gröbner basis for $U$ w.r.t.\ the {\color{OliveGreen}considered monomial order. We can reorder the $g_j$'s so that whenever $\LM (g_{i})$ and $\LM (g_{j})$ involve the same basis element for
some $i<j$, say $\LM (g_{i})=N_{i} \epsilon_{k}$ and  $\LM (g_{j})=N_{j} \epsilon_{k}$, then $\deg_{X_n}(N_{i}) \ge \deg_{X_n}(N_{j})$. It follows} that the indeterminate~$X_n$ cannot appear in the leading terms of the $u_{i,j}$'s in~\eqref{sch}.  Thus, after at most $n$ computations of the iterated syzygies, we reach a situation where none of the indeterminates  $X_{n},\dots,X_{1}$ appears in the leading terms of the computed Gröbner basis for the iterated syzygy module. This implies that the iterated syzygy module is free (as noted in Remark~\ref{claim-rem}).
\end{proof}


\begin{corollary}[Syzygy theorem for a valuation domain of Krull dimension~$\leq 1$ with a divisibility test]\label{hilbvaluation0}
Let \(M=\Hm/U\) be a finitely presented  \(\V[\uX]\)-module, where \(\V\) is a valuation domain of Krull dimension~$\leq 1$ with a divisibility test. Then \(M\) admits
a finite free \(\V[\uX]\)-resolution
\[
  0 \rightarrow F_{p} \rightarrow F_{p-1} \rightarrow \cdots \rightarrow F_{1} \rightarrow F_{0} \rightarrow M \rightarrow 0
\]
of length \(p \leq n+1\).

\end{corollary}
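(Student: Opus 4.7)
The plan is to deduce Corollary~\ref{hilbvaluation0} directly from Theorem~\ref{hilbvaluation1} by verifying that the finite\nobreakdash-generation hypothesis on~$\LT(U)$ is automatic in this setting.

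First, by the discussion of ``When is a valuation ring a Gröbner ring?'' preceding this corollary, together with~\cite[Theorem~256]{Y5}, a valuation domain~$\V$ with a divisibility test is a Gröbner ring iff it has Krull dimension~$\leq 1$. Thus, under our hypotheses, for every $n\in\NN$ and every finitely generated ideal~$I$ of~$\V[\uX]$ endowed with the lexicographic order, the initial ideal $\LT(I)$ is finitely generated.

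Second, I would promote this Gröbner property from ideals of~$\V[\uX]$ to finitely generated submodules~$U$ of~$\Hm$ with the TOP lex order. The approach is to decompose $\LT(U)=\bigoplus_{j=1}^{m}I_{j}\+ e_{j}$, where $I_{j}$ is the ideal of~$\V[\uX]$ generated by $\LT(u_{j})$ as $u=\sum_{k}u_{k}e_{k}$ ranges over elements of~$U$ with $\LP(u)=j$. Using coherence of~$\V[\uX]$---which makes $U_{\geq j}\coloneqq U\cap(\V[\uX]\+ e_{j}\oplus\dots\oplus\V[\uX]\+ e_{m})$ finitely generated and its projection onto the $e_{j}$\nobreakdash-coordinate a finitely generated ideal of~$\V[\uX]$---together with the Gröbner property established in the first step, I would show by a careful bookkeeping (tracking the TOP tie\nobreakdash-breaking rule favouring smaller positions, and handling elements with nonzero components in positions $<j$ whose leading monomial is nevertheless attained at~$j$) that each~$I_{j}$ is finitely generated, whence $\LT(U)$ itself is finitely generated.

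Once this verification is in place, Theorem~\ref{hilbvaluation1} applies directly and yields the claimed finite free $\V[\uX]$\nobreakdash-resolution of~$M$ of length $\leq n+1$. The main obstacle I anticipate is precisely this second step: although the module\nobreakdash-to\nobreakdash-ideal reduction for initial modules under a TOP order is folklore, its constructive formalisation intertwines the leading monomial in~$\V[\uX]$ with the leading position, and may require an auxiliary induction on~$m$ or a reduction via an additional indeterminate exploiting the fact, recalled in the excerpt, that the Gröbner property is preserved by adjunction of one polynomial variable.
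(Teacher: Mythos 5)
Your high-level plan is the paper's implicit one: the corollary is stated without proof, and the intended reading is exactly that, by~\cite[Theorem~256]{Y5}, a valuation domain of Krull dimension~$\leq 1$ with a divisibility test is a Gröbner ring, so that $\LT(U)$ is finitely generated and Theorem~\ref{hilbvaluation1} applies. You correctly flag that the Gröbner ring property, as defined, is about \emph{ideals}, and that a module-to-ideal step is needed. That gap is real and the paper glosses over it.

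However, the specific reduction you sketch does not work for the TOP order, which is the one Theorem~\ref{hilbvaluation1} forces us to use. You propose to compute $I_{j}$ from $U_{\geq j}\coloneqq U\cap(\V[\uX]e_{j}\oplus\dots\oplus\V[\uX]e_{m})$ and the projection $\pi_{j}$ onto the $e_{j}$-coordinate. For a TOP order, $\LP(u)=j$ is \emph{not} the condition $u_{1}=\dots=u_{j-1}=0$; it is the condition that $\LM(u_{j})\geq\LM(u_{k})$ for all $k$, with strictness for $k<j$. As a result $\LT\bigl(\pi_{j}(U_{\geq j})\bigr)$ is in general strictly larger than~$I_{j}$. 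Concretely, take $n=1$, $m=2$, $U=\gen{(1,X)}\subseteq\V[X]^{2}$ with TOP lex. Every nonzero $u=(c,cX)$ has $\LM(cX)>\LM(c)$, so $\LP(u)=2$ and $\LT(U)=\gen{Xe_{2}}$, i.e.\ $I_{1}=0$ and $I_{2}=\gen{X}$; yet $U_{\geq1}=U$ and $\pi_{1}(U)=\V[X]$, so $\LT(\pi_{1}(U_{\geq1}))=\V[X]\neq I_{1}$. The decomposition $\LT(U)=\bigoplus_{j}I_{j}e_{j}$ is correct, and the projection/intersection scheme you describe does compute the $I_{j}$ for a POT order, but not for TOP. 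Your hedges (``careful bookkeeping'', ``may require an auxiliary induction or an additional indeterminate'') are well placed: the extra-indeterminate reduction $e_{j}\mapsto Y^{m-j}$ with $X_{n}>\dots>X_{1}>Y$ does reproduce the TOP comparison on monomials, but the ideal it generates (together with $Y^{m}$) contains diagonal $Y$-shifted combinations outside $\psi(U)$, so the naive identification $\LT(U)\leftrightarrow\LT(J)$ fails too. A correct constructive argument has to establish the module form of the Gröbner property directly --- for instance by rerunning the termination proof of Theorem~\ref{Grob-Buch} with the TOP order on~$\Hm$ instead of inferring it from the $m=1$ case via projections --- rather than by the POT-style slicing you outline.
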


\begin{example}\label{ex01}Let $g_{1}=\X[2]^{4}-\X[2],\,g_{2}=2\+\X[2],\,g_{3}=\X^{3}-1 \in \ZZ _{2\ZZ }[\X,\X[2]]$, and let us use the lexicographic order
$>_{1}$ for which $\X[2]>_{1}\X$. We have
\[
  \begin{aligned}
    \rS(g_{1},g_{2})&=2\+g_{1}-\X[2]^{3}g_{2}= -2\+\X[2]=-g_{2}\text,\\
    \rS(g_{1},g_{3})&=\X^{3}g_{1}-\X[2]^{4}g_{3}= \X[2]^{4}-\X[2]\X^{3}=g_{1}-\X[2]g_{3}\text,\\
    \rS(g_{2},g_{3})&=\X^{3}g_{2}-2\+\X[2]g_{3}= 2\+\X[2]=g_{2}\text.
  \end{aligned}
\]
Thus $(g_{1},g_{2},g_{3})$ is a (pseudo-reduced) Gröbner basis for $I=\gen{g_{1},g_{2},g_{3}}$ and $\LT (I)=\gen{\X[2]^{4},2\+\X[2],\X^{3}}$. By Theorem~\ref{schvaluation1},
$u_{1,3}=\lst{\X^{3}-1,0,-\X[2]^{4}+\X[2]}$, $u_{1,2}=\lst{2,-\X[2]^{3}+1,0}$, $u_{2,3}=\lst{0,\X^{3}-1,-2\+\X[2]}$ form a (pseudo-reduced) Gröbner basis for the syzygy module $\Syz(g_{1},g_{2},g_{3})$ w.r.t.\ \foreignlanguage{german}{Schreyer}'s monomial order $>_{2}$ induced by $>_{1}$ and  $\lst{g_{1},g_{2},g_{3}}$. In particular,
\[
  \begin{aligned}
    \LT (\Syz(g_{1},g_{2},g_{3}))&=\gen{\LT (u_{1,3}),\LT (u_{1,2}), \LT (u_{2,3})}\\
    &= \gen{\X^{3}\epsilon_{1},2\epsilon_{1},\X^{3}\epsilon_{2}}= \gen{2,\X^{3}}\epsilon_{1} \oplus \gen{\X^{3}}\epsilon_{2}\text,
  \end{aligned}
\]
where $(\epsilon_{1},\epsilon_{2},\epsilon_{3})$ stands for the canonical basis of $\ZZ _{2\ZZ }[\X,\X[2]]^{3}$. We have
\[\makebox[\textwidth]{$
    \begin{aligned}
      \rS(u_{1,3},u_{1,2})&=2\+u_{1,3}-\X^{3}u_{1,2}
      \begin{aligned}[t]
        &=(-2,\X[2]^{3}\X^{3}-\X^{3},-2\+\X[2]^{4}+2\+\X[2])\\
        &=-u_{1,2}+(\X[2]^{3}-1)u_{2,3}\text,
      \end{aligned}\\
      \rS(u_{1,3},u_{2,3})&=\rS(u_{1,2},u_{2,3})=0\text.
    \end{aligned}
$}\]
We recover that  $\lst{u_{1,3},u_{1,2},u_{2,3}}$ is a Gröbner basis for $\Syz(g_{1},g_{2},g_{3})$. By Theorem~\ref{schvaluation1}, the element $u_{1,3;1,2}=\lst{2,-\X^{3}+1,-\X[2]^{3}+1}$ forms a (pseudo-reduced) Gröbner basis for the syzygy module $\Syz(u_{1,3},u_{1,2},u_{2,3})$ w.r.t.\ \foreignlanguage{german}{Schreyer}'s monomial order~$>_{3}$ induced by $>_{2}$ and  $\lst{u_{1,3},u_{1,2},u_{2,3}}$. In particular, $\LT (\Syz(u_{1,3},u_{1,2},u_{2,3}))=\gen{\LT (u_{1,3;1,2})}= \gen{2}\epsilon'_{1} $,
where $(\epsilon'_{1},\epsilon'_{2},\epsilon'_{3})$ stands for the canonical basis of $\ZZ _{2\ZZ }[\X,\X[2]]^{3}$. By Remark~\ref{claim-rem}, $\Syz(u_{1,3},u_{1,2},u_{2,3})$ is free. We conclude that  $I$ admits
the following length-$2$ free $\ZZ _{2\ZZ }[\X,\X[2]]$-resolution:
\[
  \makebox[\textwidth]{$
    0 \longrightarrow \ZZ _{2\ZZ }[\X,\X[2]] \xrightarrow{u_{1,3;1,2}} \ZZ _{2\ZZ }[\X,\X[2]]^{3} \xrightarrow{
      \left(\begin{smallmatrix}
          u_{1,3}\\
          u_{1,2}\\
          u_{2,3}
        \end{smallmatrix}\right)} \ZZ _{2\ZZ }[\X,\X[2]]^{3} \xrightarrow{
      \left(\begin{smallmatrix}
          g_{1}\\
          g_{2}\\
          g_{3}
        \end{smallmatrix}\right)} I \rightarrow 0
$.}\]
It follows that  $\ZZ _{2\ZZ }[\X,\X[2]]/I$ admits
the following  length-$3$ free $\ZZ _{2\ZZ }[\X,\X[2]]$-resolution:
\[\makebox[\textwidth]{$
    0 \rightarrow \ZZ _{2\ZZ }[\X,\X[2]] \rightarrow\ZZ _{2\ZZ }[\X,\X[2]]^{3} \rightarrow\ZZ _{2\ZZ }[\X,\X[2]]^{3} \rightarrow\ZZ _{2\ZZ }[\X,\X[2]] \mathop{\rightarrow}\limits^{\pi} \ZZ _{2\ZZ }[\X,\X[2]]/I \rightarrow 0$.}\]
\end{example}

Another consequence of Theorem~\ref{schvaluation1} is the following result.

\begin{theorem}[Syzygy theorem for a coherent valuation ring with nonzero zerodivisors and a divisibility test]\label{hilbvaluation0divis}
    Let \(M=\Hm/U\) be a finitely presented  \(\V[\uX]\)-module, where \(\V\) is a coherent valuation ring with a divisibility test and nonzero zerodivisors. Assume that, w.r.t.\ {\color{OliveGreen}some} monomial order, $\LT(U)$ is finitely generated. Then \(M\) admits
a resolution by finite free \(\V[\uX]\)-modules
\[
  \cdots \stackrel{\varphi_{p+3}}\longrightarrow F_{p} \stackrel{\varphi_{p+2}}\longrightarrow F_{p} \stackrel{\varphi_{p+1}}\longrightarrow F_{p} \stackrel{\varphi_{p}}\longrightarrow F_{p-1} \stackrel{\varphi_{p-1}}\longrightarrow \cdots \stackrel{\varphi_{2}}\longrightarrow F_{1} \stackrel{\varphi_{1}}\longrightarrow F_{0} \stackrel{\varphi_{0}}\longrightarrow M \longrightarrow 0
\]
such that for some  \(p \leq n+1\),
\begin{itemize}
\item  \(\LT (\Ker(\varphi_{p}))=\bigoplus_{j=1}^{m_{p}}\gen{b_{j}}\epsilon_{j}\) with  \(b_{1},\dots,b_{m_{p}} \in \V\) and
\((\epsilon_{1},\dots,\epsilon_{m_{p}})\)  a basis for \(F_{p}\),
\item   \(\LT (\Ker(\varphi_{p+2k-1}))=\bigoplus_{j=1}^{m_{p}}\Ann(b_{j})\epsilon_{j}\) for $k\geq 1$,
\item  \(\LT (\Ker(\varphi_{p+2k}))=\bigoplus_{j=1}^{m_{p}}\Ann(\Ann(b_{j}))\epsilon_{j}\) for $k\geq 1$,
\end{itemize}
and at each step where indeterminates remain present, the considered monomial order is \foreignlanguage{german}{Schreyer}'s monomial order (as in the proof of Theorem~\ref{hilbvaluation1}).
\end{theorem}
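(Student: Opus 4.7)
The plan is to iterate Theorem~\ref{schvaluation1}, proceeding exactly as in the proof of Theorem~\ref{hilbvaluation1} for the initial steps and then, once no indeterminates remain in the leading terms, exploiting the coherence of~$\V$ to continue the resolution in a periodic fashion.

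First I would compute a Gröbner basis $(g_{1},\dots,g_{m_1})$ of $U$ w.r.t.\ the TOP lexicographic order with $X_{n}>X_{n-1}>\cdots >X_{1}$, producing $\varphi_{1}\colon F_{1}\to F_{0}=\Hm$ whose kernel is the syzygy module. I then apply Schreyer's syzygy algorithm iteratively as given by Theorem~\ref{schvaluation1}. After each step I perform the pseudo-reduction described in Remark~\ref{remBuchb1} and in the proof of Theorem~\ref{hilbvaluation1}, so that whenever two leading monomials share the same position, say $\LM(g_{i})=N_{i}\epsilon_{k}$ and $\LM(g_{j})=N_{j}\epsilon_{k}$ with $i<j$, one has $N_{i}>N_{j}$. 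By the choice of lex order, this forces $(\mdeg(g_{j})-\mdeg(g_{i}))^{+}$ to have a zero $n$\nobreakdash-th component, so the exponent~$\beta$ in formula~\eqref{sch} satisfies $\beta_{n}=0$: the indeterminate~$X_{n}$ does not appear in the leading terms of the new Gröbner basis. Iterating, after at most $n$ Schreyer steps the indeterminates $X_{n},X_{n-1},\dots,X_{1}$ successively disappear from the leading terms. Thus at some step $p\leq n+1$ we obtain a Gröbner basis $(h_{1},\dots,h_{m_p})$ of $\Ker(\varphi_{p-1})=\operatorname{Im}(\varphi_{p})$ such that $\LT(\Ker(\varphi_{p}))=\bigoplus_{j=1}^{m_{p}}\gen{b_{j}}\epsilon_{j}$ with $b_{j}\in\V$ and the $\epsilon_{j}$ pairwise distinct basis vectors of $F_{p}$ (a final pseudo-reduction within $F_{p}$ ensures distinctness of leading positions).

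Second, I analyse what happens from step~$p$ onwards. Since the leading positions of the~$h_{j}$'s are pairwise distinct, every cross S\nobreakdash-polynomial $\rS(h_{i},h_{j})$ with $i\neq j$ vanishes, so the only syzygies produced by Algorithm~\ref{genSyzygyalg} come from the auto\nobreakdash-S\nobreakdash-polynomials $\rS(h_{j},h_{j})$. By Theorem~\ref{schvaluation1}, the resulting Gröbner basis $(u_{1,1},\dots,u_{m_{p},m_{p}})$ of $\Ker(\varphi_{p})$ satisfies $\LT(u_{j,j})=a_{j}\epsilon_{j}$ with $\gen{a_{j}}=\Ann(b_{j})$, whence $\LT(\Ker(\varphi_{p+1}))=\bigoplus_{j=1}^{m_{p}}\Ann(b_{j})\epsilon_{j}$, with $F_{p+1}$ of rank~$m_{p}$ and again distinct leading positions. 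Applying the same observation once more, $\LT(\Ker(\varphi_{p+2}))=\bigoplus_{j=1}^{m_{p}}\Ann(\Ann(b_{j}))\epsilon_{j}$.

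Third, I iterate and invoke the generic identity $\Ann(\Ann(\Ann(x)))=\Ann(x)$, valid in any commutative ring, which shows that the sequence of principal annihilator ideals produced at each further step alternates between $\Ann(b_{j})$ and $\Ann(\Ann(b_{j}))$. This gives the claimed infinite resolution with $F_{p}$ repeating from step~$p$ onwards.

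The main obstacle is the careful bookkeeping of pseudo-reduction across the Schreyer iterations: one has to verify that reducing between elements with equal leading position preserves, at every step, both the form of the leading terms predicted by formula~\eqref{sch} and the property that among the lower-order terms only basis elements with strictly greater position indices (and no indeterminates, once $p$~is reached) can occur, so that the auto-S-polynomial computation at step~$p{+}k$ indeed yields a new generator with leading term $a\epsilon_{j}$ for the appropriate annihilator generator~$a$. This is essentially the argument already used in Theorem~\ref{hilbvaluation1}, extended to accommodate the nonzero auto-S-polynomials coming from zerodivisor leading coefficients.
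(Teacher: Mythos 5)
Your proposal is correct and follows essentially the same route as the paper: reduce to a Gröbner basis with no indeterminates in the leading terms via the argument of Theorem~\ref{hilbvaluation1}, note that with distinct leading positions only auto-S-polynomials survive so Theorem~\ref{schvaluation1} yields the annihilator direct sums, and then invoke $\Ann(\Ann(\Ann(x)))=\Ann(x)$ for the periodicity. The only difference is cosmetic: you unroll the lex-order/pseudo-reduction argument from the proof of Theorem~\ref{hilbvaluation1}, which the paper simply cites.
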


\begin{proof} The part
\[
  F_{p} \stackrel{\varphi_{p}}\longrightarrow F_{p-1} \stackrel{\varphi_{p-1}}\longrightarrow \cdots \stackrel{\varphi_{2}}\longrightarrow F_{1} \stackrel{\varphi_{1}}\longrightarrow F_{0} \stackrel{\varphi_{0}}\longrightarrow M \longrightarrow 0
\]
of the free $\V[\uX]$-resolution with $p \leq n+1$ and $ \LT (\Ker(\varphi_{p}))=\bigoplus_{j=1}^{m_{p}}\gen{b_{j}}\epsilon_{j}$ follows from the proof of Theorem~\ref{hilbvaluation1}. W.l.o.g.,\ the $b_{j}$'s are $\neq0$. Let us denote by $\lst{g_{1},\dots,g_{m_{p}}}$ a Gröbner basis for $\Ker(\varphi_{p})$ such that $ \LT (g_{j})=b_{j}\epsilon_{j}$ for $1 \leq j\leq m_{p}$. So $\rS(g_{i},g_{j})=0$ for $i < j$. Thus the fact that $ \LT (\Ker(\varphi_{p+1}))=\bigoplus_{j=1}^{m_{p}}\Ann(b_{j})\epsilon_{j}$, $ \LT (\Ker(\varphi_{p+2}))=\bigoplus_{j=1}^{m_{p}}\Ann(\Ann(b_{j}))\epsilon_{j}$, etc.\ follows immediately from Theorem~\ref{schvaluation1}. Finally, let us recall
the equality $\Ann(\Ann(\Ann(I)))=\Ann(I)$ for an ideal~$I$.
\end{proof}

Let us point out that this shows that the free resolution is in general not a finite one.

\begin{corollary}[Syzygy theorem for a zero-dimensional coherent valuation ring with a divisibility test]\label{hilbvaluation0dim}
  Let \(M=\Hm/U\) be a finitely presented  \(\V[\uX]\)-module, where \(\V\) is a zero-dimensional coherent valuation ring\footnote{Note that a zero-dimensional ring without nonzero zerodivisors is a discrete field.} with a divisibility test. Then \(M\) admits
a free \(\V[\uX]\)-resolution as described in~Theorem~\ref{hilbvaluation0divis}.
\end{corollary}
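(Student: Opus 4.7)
The plan is to derive the result from Theorem~\ref{hilbvaluation0divis}, splitting on whether $\V$ has a nonzero zerodivisor. If it has none, the footnote tells us that $\V$ is a discrete field; the classical Hilbert syzygy theorem then furnishes a finite free resolution of $M$ of length at most $n+1$, and since $\Ann(b_{j})=0$ for every nonzero $b_{j}\in\V$, this is exactly the form described in Theorem~\ref{hilbvaluation0divis}, with the infinite tail collapsing.

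Assume now that $\V$ possesses a nonzero zerodivisor. To apply Theorem~\ref{hilbvaluation0divis}, it only remains to verify that $\LT(U)$ is finitely generated w.r.t.\ the TOP lexicographic order. The preceding discussion shows, by producing a nonzero nilpotent element, that a zero-dimensional coherent valuation ring with a divisibility test and a nonzero zerodivisor is archimedean; and by the cited \cite[Theorem~272]{Y5}, archimedeanity is equivalent for such rings to being a Gröbner ring. Hence $\LT(I)$ is finitely generated for every finitely generated ideal $I\subseteq\V[\uX]$.

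The last step is to lift this property from ideals to submodules of~$\Hm$. The plan is to write $\LT(U)=\bigoplus_{\ell=1}^{m}J_{\ell}e_{\ell}$ with $J_{\ell}\coloneqq\sotq{f\in\V[\uX]}{fe_{\ell}\in\LT(U)}$, and to realise each $J_{\ell}$ as $\LT(K_{\ell})$ for an ideal $K_{\ell}\subseteq\V[\uX]$ extracted from a position-$\ell$ slice of~$U$. Applying the Gröbner ring property to each $K_{\ell}$ then yields finite generation of every $J_{\ell}$ and hence of~$\LT(U)$, after which Theorem~\ref{hilbvaluation0divis} delivers the resolution. The main obstacle is precisely this module-to-ideal reduction: the TOP order breaks ties by position rather than segregating positions outright, so constructing the slice and its ideal $K_{\ell}$ requires an iterative elimination ensuring that finite generation is preserved at each stage.
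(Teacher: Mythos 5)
Your two-case split is exactly what the footnote to the corollary is signalling, and the appeal to the ``When is a valuation ring a Gröbner ring?'' discussion and to \cite[Theorem~272]{Y5} to obtain archimedeanity (and hence the Gröbner ring property) when $\V$ has a nonzero zerodivisor is the correct use of the paper's material. The gap you flag but do not close is the one genuine obstacle: the Gröbner ring property, as defined in this paper, concerns finitely generated ideals of $\R[\uX]$ in the lexicographic order, whereas Theorem~\ref{hilbvaluation0divis} needs finite generation of $\LT(U)$ for a finitely generated submodule $U$ of $\Hm$ in the TOP lexicographic order.

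Your proposed route through slices does not work as stated. Writing $\LT(U)=\bigoplus_{\ell}J_{\ell}e_{\ell}$ is automatic (it is a monomial submodule), but there is no ideal $K_{\ell}$ ``extracted from a position-$\ell$ slice of $U$'' whose leading-term ideal is $J_{\ell}$. Take $m=2$, $e_{1}>e_{2}$ in the TOP order, and $U=\langle(X,X^{2})\rangle\subseteq\V[X]^{2}$: for every $f\in\V[X]\setminus\{0\}$ the element $f\cdot(X,X^{2})$ has leading position $2$, so $J_{1}=0$; yet the projection of $U$ to the first coordinate is $\langle X\rangle$, whose leading-term ideal is $\langle X\rangle\neq J_{1}$. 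The slice ideals systematically overcount, and conversely one cannot identify $J_{\ell}$ without already having a Gröbner basis for $U$, which is what one is trying to obtain. The ``iterative elimination'' you allude to would therefore have to be spelled out carefully; the usual device for a module-to-ideal reduction (auxiliary indeterminates $Y_{1},\dots,Y_{m}$ with $e_{j}\mapsto Y_{j}$, the ideal generated by the image of $U$ together with the monomials $Y_{i}Y_{j}$, and the fact that a Gröbner ring remains one after adjoining indeterminates) needs a nontrivial choice of monomial order to restrict to the TOP order on $\Hm$ while still fitting the lexicographic hypothesis in the definition of a Gröbner ring. The paper supplies no proof for this corollary (nor for Corollary~\ref{hilbvaluation0}) and evidently treats the module case of the Gröbner ring property as available from~\cite{Y5}; your proposal correctly locates this as the missing piece, but does not supply it, and the mechanism you sketch would fail.
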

\begin{example}\label{ex1} Let $g_{1}=\X[2]^{4}-\X[2],\,g_{2}=2\+\X[2],\,g_{3}=\X^{3}-1 \in (\ZZ /4\+\ZZ )[\X,\X[2]]$, and let us use the lexicographic order
$>_{1}$ for which $\X[2]>_{1}\X$. We have
\[
  \begin{aligned}
\rS(g_{1},g_{1})&=0\+ g_{1}=0\text,&\rS(g_{1},g_{2})&=2\+g_{1}-\X[2]^{3}g_{2}= -2\+\X[2]=-g_{2}\text,\\
\rS(g_{2},g_{2})&=2\+g_{2}=0\text,&\rS(g_{2},g_{3})&=\X^{3}g_{2}-2\+\X[2]g_{3}= 2\+\X[2]=g_{2}\text,\\
\rS(g_{3},g_{3})&=0\+ g_{3}=0\text,&\rS(g_{1},g_{3})&=\X^{3}g_{1}-\X[2]^{4}g_{3}= \X[2]^{4}-\X[2]\X^{3}=g_{1}-\X[2]g_{3}\text.
\end{aligned}
\]
Thus $(g_{1},g_{2},g_{3})$ is a (pseudo-reduced) Gröbner basis for $I=\gen{g_{1},g_{2},g_{3}}$ and $\LT (I)=\gen{\X[2]^{4},2\+\X[2],\X^{3}}$. By Theorem~\ref{schvaluation1},
$u_{1,3}=(\X^{3}-1,0,-\X[2]^{4}+\X[2])$, $u_{1,2}=(2,-\X[2]^{3}+1,0)$, $u_{2,3}=(0,\X^{3}-1,-2\+\X[2])$, $u_{2,2}=(0,2,0)$ form a (pseudo-reduced) Gröbner basis for the syzygy module $\Syz(g_{1},g_{2},g_{3})$ w.r.t.\ \foreignlanguage{german}{Schreyer}'s monomial order $>_{2}$ induced by~$>_{1}$ and  $\lst{g_{1},g_{2},g_{3}}$. In particular,
\[\makebox[\textwidth]{$
    \begin{aligned}
      \LT (\Syz(g_{1},g_{2},g_{3}))&=\gen{\LT (u_{1,3}),\dots, \LT (u_{2,2})}\\
      &= \gen{\X^{3}\epsilon_{1},2\+\epsilon_{1},\X^{3}\epsilon_{2},2\+\epsilon_{2}}= \gen{2,\X^{3}}\epsilon_{1} \oplus \gen{2,\X^{3}}\epsilon_{2}\text,
    \end{aligned}
$}\]
where $(\epsilon_{1},\epsilon_{2},\epsilon_{3})$ stands for the canonical basis of $(\ZZ /4\+\ZZ )[\X,\X[2]]^{3}$. We have
\[\makebox[\textwidth]{$
    \begin{aligned}
      \rS(u_{1,3},u_{1,3})&=0\+u_{1,3}=0\text,\\
      \rS(u_{1,3},u_{1,2})&=2\+u_{1,3}-\X^{3}u_{1,2}\\
      &=(-2,\X[2]^{3}\X^{3}-\X^{3},-2\+\X[2]^{4}+2\+\X[2])    \\
      &=-u_{1,2}+(\X[2]^{3}-1)u_{2,3}\text,&\\
      \rS(u_{1,3},u_{2,3})&=\rS(u_{1,3},u_{2,2})=0\text,\\
      \rS(u_{1,2},u_{1,2})&=2\+u_{1,2}=(0,-2\+\X[2]^{3}+2,0)\\
      &=(-\X[2]^{3}+1)u_{2,2}\text,
    \end{aligned}\quad
    \begin{aligned}
      \rS(u_{1,2},u_{2,3})&=\rS(u_{1,2},u_{2,2})=0\text,\\
      \rS(u_{2,3},u_{2,3})&=0\+u_{2,3}=0\text,\\
      \rS(u_{2,3},u_{2,2})&=2\+u_{2,3}-\X^{3}u_{2,2}\\
      &=(0,-2,0\+\X[2])\\
      &=(0,-2,0)\\
      &=-u_{2,2}\text,\\
      \rS(u_{2,2},u_{2,2})&=2\+u_{2,2}=0\text.
    \end{aligned}
$}\]
We recover that  $(u_{1,3},u_{1,2},u_{2,3},u_{2,2})$ is a Gröbner basis for $\Syz(g_{1},g_{2},g_{3})$. By Theorem~\ref{schvaluation1}, $u_{1,3;1,2}=(2,-\X^{3}+1,-\X[2]^{3}+1,0)$, $u_{1,2;1,2}=(0,2,0,\X[2]^{3}-1)$, $u_{2,3;2,2}=(0,0,2,-\X^{3}+1)$, $u_{2,2;2,2}=(0,0,0,2)$ form a (pseudo-reduced) Gröbner basis for the syzygy module $\Syz(u_{1,3},u_{1,2},u_{2,3},u_{2,2})$ w.r.t.\ \foreignlanguage{german}{Schreyer}'s monomial order $>_{3}$ induced by $>_{2}$ and  $\lst{u_{1,3},u_{1,2},u_{2,3},u_{2,2}}$. In particular,
\[\makebox[\textwidth]{$
    \begin{aligned}
      \LT (\Syz(u_{1,3},u_{1,2},u_{2,3},u_{2,2}))&=\gen{\LT(u_{1,3;1,2}),\dots,\LT(u_{2,2;2,2})}\\
      &= \gen{2\epsilon'_{1},\dots,2\+\epsilon'_{4}}= \gen{2}\epsilon'_{1} \oplus \gen{2}\epsilon'_{2} \oplus \gen{2}\epsilon'_{3} \oplus \gen{2}\epsilon'_{4}\text,
\end{aligned}
$}\]
where $(\epsilon'_{1},\dots,\epsilon'_{4})$ stands for the canonical basis of $(\ZZ /4\+\ZZ )[\X,\X[2]]^{4}$. By Theorem~\ref{schvaluation1}, we find four vectors $u_{(1,3;1,2),(1,3;1,2)},\dots,u_{(2,2;2,2),(2,2;2,2)}\in (\ZZ /4\+\ZZ )[\X,\X[2]]^{4}$ forming a (pseudo-reduced) Gröbner basis for the syzygy module $\Syz(u_{1,3;1,2},\dots,u_{2,2;2,2})$ w.r.t.\ \foreignlanguage{german}{Schreyer}'s monomial order $>_{4}$ induced by $>_{3}$ and  $\lst{u_{1,3;1,2},\dots,u_{2,2;2,2}}$. In particular,
\[\makebox[\textwidth]{$
    \begin{aligned}
      \LT (\Syz(u_{1,3;1,2},\dots,u_{2,2;2,2}))&=\gen{\LT (u_{(1,3;1,2),(1,3;1,2)}),\dots, \LT (u_{(2,2;2,2),(2,2;2,2)})}\\
      &= \gen{2}\epsilon'_{1} \oplus \gen{2}\epsilon'_{2} \oplus \gen{2}\epsilon'_{3} \oplus \gen{2}\epsilon'_{4}\text,
    \end{aligned}
$}\]
etc. We conclude that $I$ admits
the free $(\ZZ /4\+\ZZ )[\X,\X[2]]$-resolution
\[
  \cdots \stackrel{\varphi_{3}}\longrightarrow  (\ZZ /4\+\ZZ )[\X,\X[2]]^{4} \stackrel{\varphi_{2}}\longrightarrow (\ZZ /4\+\ZZ )[\X,\X[2]]^{4} \stackrel{\varphi_{1}}\longrightarrow (\ZZ /4\+\ZZ )[\X,\X[2]]^{3} \stackrel{\varphi_{0}}\longrightarrow I \longrightarrow 0
\]
such that   $ \LT (\Ker(\varphi_{i}))=\gen{2}\epsilon'_{1} \oplus \gen{2}\epsilon'_{2} \oplus \gen{2}\epsilon'_{3} \oplus \gen{2}\epsilon'_{4}$ for $i\geq 1$.

\end{example}

\section{The syzygy theorem and Schreyer's algorithm for a Bézout ring}


As explained in the proof of Theorem~\ref{Grob-Buch2}, one can avoid branching when computing a dynamical Gröbner basis (see~\cite{HY,Ye2,Y5}) for a Bézout domain of Krull dimension~$\leq 1$ (e.g.\ $\ZZ $ and the ring of all algebraic integers---note that the last one is not a PID) or a zero-dimensional coherent Bézout ring.
Note that this is not possible for  Prüfer domains of Krull dimension~$\leq 1$ which are not Bézout domains (e.g.\ $\ZZ [\sqrt{-5}]$, see~\cite[Section~4]{HY}).

Let us now generalise the results of Section~\ref{sec5} to the case of coherent strict Bézout rings.

\begin{theorem}[Schreyer's algorithm for Bézout rings]\label{schbez}
  We consider a coherent strict Bézout ring~$\R$ with a divisibility test.
  Let \(U\) be a submodule of \(\Hm\) with Gröbner basis \(\lst{g_{1},\dots,g_{p}}\).
  Then the relations~\(u_{i,j}\) computed by Algorithm~\ref{genSyzygyalg} form a Gröbner basis for the syzygy module \(\Syz(g_{1},\dots,
  g_{p})\) w.r.t.\ \foreignlanguage{german}{Schreyer}'s monomial order induced by~\(>\) and \(\lst{g_{1},\dots,g_{p}}\).
\end{theorem}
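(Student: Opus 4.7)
The plan is to adapt the proof of Theorem~\ref{schvaluation1} to the Bézout setting, keeping its three-step structure: compute the leading terms of the $u_{i,j}$ under Schreyer's order; generalise Proposition~\ref{valuation1} to coherent strict Bézout rings with a divisibility test; then run the divisibility argument that identifies a dividing generator for an arbitrary syzygy.

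First I would pin down the leading terms. For $i=j$, the auto-syzygy $u_{i,i}=b\epsilon_i - q_1\epsilon_1-\cdots-q_p\epsilon_p$ (with $\gen b=\Ann(\LC(g_i))$) satisfies $\LT(u_{i,i})=b\epsilon_i$, because the Division algorithm guarantees $\LM(q_\ell)\LM(g_\ell)\leq\LM(\rS(g_i,g_i))<\LM(g_i)$ whenever $q_\ell\ne0$. For $i<j$ with $\LP(g_i)=\LP(g_j)$, Algorithm~\ref{genSalg} produces $\rS(g_i,g_j)=b\uX^\beta g_i-a\uX^\alpha g_j$ with coincident leading monomials $\LM(\uX^\beta g_i)=\LM(\uX^\alpha g_j)$; Schreyer's order breaks this tie by the position index $i<j$, while the $q_\ell\epsilon_\ell$ are strictly smaller. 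Hence $\LT(u_{i,j})=b\uX^\beta\epsilon_i$, where $\beta=(\mdeg(g_j)-\mdeg(g_i))^+$ and $b\gcd(\LC(g_i),\LC(g_j))=\LC(g_j)$. In both cases the leading term of $u_{i,j}$ coincides with that of the purely‑term syzygy $\rS_{i,j}$ of Algorithm~\ref{gensyzygyalg}.

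Next I would establish the Bézout analog of Proposition~\ref{valuation1}: for terms $T_1,\dots,T_p\in\Hm$ over a coherent strict Bézout ring with a divisibility test, the $\rS_{i,j}$ produced by Algorithm~\ref{gensyzygyalg} generate $\Syz(T_1,\dots,T_p)$. Given a syzygy $(v_1,\dots,v_p)$, isolate within each fixed position the coefficients whose product with $T_\ell$ reaches the maximal monomial; these must cancel, so Lemma~\ref{lemm} rewrites them as an $\R$-linear combination of S-polynomials $\rS(T_i,T_j)$, which translates directly into a combination of the $\rS_{i,j}$. Subtracting off this combination strictly decreases the maximal monomial and, the set of monomials being well-ordered, the procedure terminates. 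I expect this step to be the main obstacle, since passing from a valuation ring (where the pivot is a single leading coefficient dividing all others) to the Bézout setting forces one through the combinatorial Bézout-identity bookkeeping of the proof of Lemma~\ref{lemm}.

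With these tools the remainder is as in Theorem~\ref{schvaluation1}. Take $v=\sum_\ell v_\ell\epsilon_\ell\in\Syz(g_1,\dots,g_p)$; set $\LM(v_\ell\epsilon_\ell)=N_\ell\epsilon_\ell$, $\LC(v_\ell\epsilon_\ell)=c_\ell$, and $\LM(v)=N_i\epsilon_i$. Let $\mathcal S=\sotq{\ell}{N_\ell\LM(g_\ell)=N_i\LM(g_i)}$; the definition of Schreyer's order forces $\ell\geq i$ for every $\ell\in\mathcal S$. The vector $v'=\sum_{\ell\in\mathcal S}c_\ell N_\ell\epsilon_\ell$ becomes $0$ once each $\epsilon_\ell$ is replaced by $\LT(g_\ell)$, so it is a syzygy of the terms $\LT(g_\ell)$ for $\ell\in\mathcal S$. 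By the Bézout extension of Proposition~\ref{valuation1}, $v'$ is an $\R[\uX]$-linear combination of the $\rS_{\ell,j}$ with $\ell\leq j$ in $\mathcal S$. Applying Lemma~\ref{lemdivis} to the explicit leading terms of these $\rS_{\ell,j}$ (computed in step one), the monomial $\LT(v')=\LT(v)$ must be divisible by $\LT(\rS_{i,j})=\LT(u_{i,j})$ for some $j\in\mathcal S$, which proves that $\lst{u_{i,j}}$ is a Gröbner basis for $\Syz(g_1,\dots,g_p)$ under Schreyer's order.
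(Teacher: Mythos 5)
Your proof is correct in substance, but it takes a genuinely different route from the paper's. The paper dispatches Theorem~\ref{schbez} in one line by appealing to the local-global reduction explained in the proof of Theorem~\ref{Grob-Buch2}: since the S-polynomial $\rS = b'\uX^{(\nu-\mu)^+}f - a'\uX^{(\mu-\nu)^+}g$ specialises, up to a unit, to the valuation-ring S-polynomial in each of the two comaximal localisations $\R[\tfrac1{a'}]$ and $\R[\tfrac1{b'}]$, the Bézout computation tracks the local (valuation-ring) computation without opening branches, and so Theorem~\ref{schvaluation1} transfers. You instead rerun \foreignlanguage{german}{Schreyer}'s argument directly over $\R$, which requires you to (i) recompute $\LT(u_{i,j})$ in the Bézout setting, (ii) extend Proposition~\ref{valuation1} to coherent strict Bézout rings, and (iii) repeat the reduction from a general syzygy $v$ to the term syzygy $v'$. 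Step (i) is fine and consistent with the formulas recorded in Theorems~\ref{schint} and~\ref{schintn}; step (iii) mirrors the paper's proof of Theorem~\ref{schvaluation1}. Step (ii) is, as you anticipate, the nontrivial ingredient that the paper avoids paying for: you must lift the rewriting in the proof of Lemma~\ref{lemm} from an identity in $\Hm$ to an identity of syzygy vectors in $\R[\uX]^{p}$ (including the terminal step $z\epsilon_{i_p}$ with $z\in\Ann(\LC)$, which is then a multiple of $\rS_{i_p,i_p}$), and then run the tournament/Bézout-identity bookkeeping to clear the products $a_{i_1,i_2}\cdots a_{i_{p-1},i_p}$. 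This is all doable, but it is real extra work; the paper's one-line reduction is the trade-off buying brevity at the cost of invoking the dynamical local-global machinery. One small imprecision in your final paragraph: over a strict Bézout ring you cannot conclude that $\LT(v')$ is divisible by a single $\LT(\rS_{i,j})$ — that single-divisor statement is the valuation-ring clause of Lemma~\ref{lemdivis}. What the Bézout clause of Lemma~\ref{lemdivis} gives is $\LT(v')\in\gen{\LT(\rS_{i,j})}_{j\in\mathcal S}$ (the gcd over a suitable subset divides $\LC(v')$), which is exactly what is needed to conclude $\LT(v)\in\gen{\LT(u_{i,j})}$ and hence that the $u_{i,j}$ form a Gröbner basis.
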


\begin{proof}
This follows directly from the local case given by Theorem~\ref{schvaluation1}: see the proof of Theorem~\ref{Grob-Buch2} for an explanation.
\end{proof}

\begin{theorem}[Syzygy theorem for a Bézout domain with a divisibility test]\label{hilbBezout1}\leavevmode
  Let \(M=\Hm/U\) be a finitely presented  \(\R[\uX]\)-module, where \(\R\) is a Bézout domain with a divisibility test.
Assume that, w.r.t.\ {\color{OliveGreen}some} monomial order, $\LT(U)$ is finitely generated. Then \(M\) admits
a finite free \(\R[\uX]\)-resolution
\[
  0 \rightarrow F_{p} \rightarrow F_{p-1} \rightarrow \cdots \rightarrow F_{1} \rightarrow F_{0} \rightarrow M \rightarrow 0
\]
of length \(p \leq n+1\).
\end{theorem}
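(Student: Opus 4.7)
The plan is to imitate the proof of Theorem~\ref{hilbvaluation1}, substituting Bézout-specific arguments for the valuation-domain-specific ones. The essential new ingredient is Theorem~\ref{schbez}, which guarantees that \foreignlanguage{german}{Schreyer}'s syzygy algorithm behaves in the Bézout setting exactly as in the valuation setting, together with Theorem~\ref{Grob-Buch2}, which produces a finite Gröbner basis whenever $\LT(U)$ is finitely generated.

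First, it suffices to exhibit a free resolution of $U$ of length at most $n$. I work with the lexicographic monomial order on $\R[\uX]$ satisfying $X_n > \cdots > X_1$ and the corresponding TOP order on $\Hm$. By Theorem~\ref{Grob-Buch2}, $U$ admits a finite Gröbner basis $G_0 = \lst{g_1,\dots,g_p}$. I then apply Algorithm~\ref{genSyzygyalg} iteratively, invoking Theorem~\ref{schbez} at each step to obtain a Gröbner basis $G_k$ for the $k$-th iterated syzygy module w.r.t.\ the induced \foreignlanguage{german}{Schreyer} order.

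Second, I extract the leading-term formula from Algorithm~\ref{genSyzygyalg}: for $i<j$ with $\LP(g_i)=\LP(g_j)$, the leading term of $u_{i,j}$ has the form $b\+\uX^\beta \epsilon_i$ where $\beta=(\mdeg(g_j)-\mdeg(g_i))^+$; this follows from the analysis in the proof of Theorem~\ref{schvaluation1} combined with the local-to-global argument underlying Theorem~\ref{schbez}. Because $\R$ is a domain, all auto-S-polynomials vanish and the $i=j$ case contributes nothing. The crucial observation is that under lex with $X_n$ largest, $\LM(g_i)>\LM(g_j)$ with equal leading position forces $\mdeg(g_i)_n \geq \mdeg(g_j)_n$, whence $\beta_n=0$: the indeterminate $X_n$ does not appear in any leading term of $G_1$. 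Iterating, at step $k$ none of $X_n,\dots,X_{n-k+1}$ appear in the leading terms of $G_k$, and by the same lex argument applied with $X_{n-k}$ now effectively the largest remaining indeterminate, $X_{n-k}$ is dropped at the next step. After at most $n$ iterations, the Gröbner basis $G_n$ has all leading terms in $\bigoplus_j \R\+\epsilon_j$, so $\LT(U_n) = \bigoplus_j I_j \+\epsilon_j$ with each $I_j$ a finitely generated, hence principal, ideal of~$\R$.

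Third, I argue that $U_n$ is free. Choosing $d_j \in \R$ with $I_j=\gen{d_j}$ and $h_j \in U_n$ with $\LT(h_j)=d_j \+\epsilon_j$, the $h_j$'s have pairwise distinct leading positions, so all their S-polynomials vanish; by Buchberger's criterion (Theorem~\ref{thm1}), they form a Gröbner basis, and a leading-module comparison shows they generate $U_n$. Freeness follows from the domain hypothesis: any nontrivial $\R[\uX]$-linear relation $\sum_j p_j h_j = 0$ would possess a well-defined leading term at the position of maximal priority, which cannot vanish in a domain. The main obstacle is precisely this final step, a Bézout analogue of Remark~\ref{claim-rem}; Remark~\ref{claim-rem} itself warns that the analogous statement fails for valuation rings with nonzero zerodivisors, so the restriction to a Bézout \emph{domain} (rather than to coherent strict Bézout rings in general) is essential. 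A secondary concern, justifying the leading-term formula~\eqref{sch} in the Bézout setting, is already packaged inside Theorem~\ref{schbez} through the local-to-global reduction recalled in the proof of Theorem~\ref{Grob-Buch2}.
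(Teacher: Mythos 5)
Your argument is correct and unfolds exactly what the paper compresses into a one-line appeal (``this follows directly from the local case''): you invoke Theorem~\ref{Grob-Buch2} for the existence of a finite Gröbner basis, Theorem~\ref{schbez} for Schreyer's syzygy algorithm, replicate the degree-drop argument from the proof of Theorem~\ref{hilbvaluation1} (correctly observing that one may only reorder so that $N_i \geq N_j$ rather than reduce to strict inequality, and that $\beta_n=0$ holds in either case), and supply the Bézout-domain analogue of the freeness observation in Remark~\ref{claim-rem}. Your final step — generators with pairwise distinct leading positions over a domain admit no nontrivial syzygy, since the maximal leading term cannot cancel — is precisely the point that the paper leaves implicit in delegating to the local case, and you are right that the domain hypothesis (ruling out nonzero zerodivisors) is what makes it work.
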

\begin{proof}
This follows directly from the local case.
\end{proof}

\begin{corollary}[Syzygy theorem for a one-dimensional  Bézout domain with a divisibility test]\label{hilbbezou0}\leavevmode
Let \(M=\Hm/U\) be a finitely presented  \(\R[\uX]\)-module, where \(\R\) is a Bézout domain of Krull dimension~$\leq 1$ with a divisibility test. Then \(M\) admits
a finite free \(\R[\uX]\)-resolution
\[
  0 \rightarrow F_{p} \rightarrow F_{p-1} \rightarrow \cdots \rightarrow F_{1} \rightarrow F_{0} \rightarrow M \rightarrow 0
\]
of length \(p \leq n+1\).
\end{corollary}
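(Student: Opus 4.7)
The plan is to derive the corollary from Theorem~\ref{hilbBezout1} by verifying that, for a Bézout domain $\R$ of Krull dimension $\leq 1$ with a divisibility test, the hypothesis that $\LT(U)$ is finitely generated with respect to the TOP lexicographic monomial order is automatically satisfied.

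First I would invoke the results recalled in the subsection ``When is a valuation ring a Gröbner ring?'': a strongly discrete Prüfer domain, in particular a Bézout domain with a divisibility test, is a Gröbner ring iff it has Krull dimension $\leq 1$. Hence, under our hypotheses, $\R$ is a Gröbner ring, so every finitely generated ideal $J$ of $\R[\uX]$ has a finitely generated leading term ideal $\LT(J)$ with respect to the lexicographic order.

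Next I would transfer this finiteness property from ideals of $\R[\uX]$ to finitely generated submodules of $\Hm$ endowed with the TOP lexicographic order. For this, observe that $\LT(U)$ decomposes as $\bigoplus_{j=1}^{m} I_j e_j$ with $I_j \coloneqq \{\,c\uX^\alpha : c\uX^\alpha e_j \in \LT(U)\,\}$ an ideal of $\R[\uX]$. To show each $I_j$ is finitely generated, I would peel off positions from bottom to top: the submodule $U_{\geq j} \coloneqq U \cap (\R[\uX] e_j \oplus \cdots \oplus \R[\uX] e_m)$ is finitely generated by coherence of $\R[\uX]$ (which holds because $\R$ is Gröbner, hence coherent, and coherence is preserved by polynomial extension), and projecting $U_{\geq j}$ onto the $j$-th coordinate gives a finitely generated ideal of $\R[\uX]$ whose leading term ideal is precisely $I_j$; the Gröbner ring property applied to this projection then yields finite generation of $I_j$.

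With $\LT(U)$ shown to be finitely generated, Theorem~\ref{hilbBezout1} delivers the desired finite free $\R[\uX]$-resolution of length $\leq n+1$. The main obstacle, to my mind, is the transfer of the Gröbner property from ideals to modules: concretely, one must verify carefully that the projection of $U_{\geq j}$ onto the $j$-th coordinate is compatible with the TOP order in such a way that its leading term ideal coincides with $I_j$, and that intersecting $U$ with the partial summand $\R[\uX] e_j \oplus \cdots \oplus \R[\uX] e_m$ indeed produces a finitely generated submodule. Once this position-by-position reduction is in hand, the rest of the argument is a direct appeal to Theorem~\ref{hilbBezout1}, much as Corollary~\ref{hilbvaluation0} follows from Theorem~\ref{hilbvaluation1} in the local case.
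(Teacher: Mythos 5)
The overall strategy — reduce to Theorem~\ref{hilbBezout1} by showing that the Krull dimension hypothesis forces $\LT(U)$ to be finitely generated, via the Gröbner ring characterisation recalled after Theorem~\ref{Grob-Buch2} — is the right one, and it matches what the paper implicitly relies on (the paper states Corollary~\ref{hilbbezou0} without a written proof, just as it does Corollary~\ref{hilbvaluation0}). The gap lies in the module-to-ideal reduction you propose.

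Your key claim is that $\LT\bigl(\pi_j(U_{\geq j})\bigr)=I_j$, where $U_{\geq j}=U\cap(\R[\uX]e_j\oplus\cdots\oplus\R[\uX]e_m)$. This is correct for a \emph{position over term} order (where $\LP(h)=j$ forces $h\in U_{\geq j}$), but it is false for the TOP order that the theorem requires. With TOP, an element $h$ with $\LP(h)=j$ may perfectly well have nonzero coordinates at positions $<j$ (those coordinates merely cannot contain the leading monomial), so $h\notin U_{\geq j}$ in general. A minimal counterexample: take $m=2$, $n=1$, and $U=\gen{(1,X)}\subseteq\R[X]^2$ with the TOP lexicographic order. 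Then $\LT\bigl((f,fX)\bigr)=\LT(f)\,X\,e_2$ for every $f$, so $\LT(U)=\gen{Xe_2}$, i.e.\ $I_2=\gen{X}$; but $U_{\geq 2}=U\cap\R[X]e_2=0$, so $\pi_2(U_{\geq 2})=0$ and your proposed identity fails. Similarly, for $U=\gen{(X,1),(1,Y)}$ one gets $I_1=\gen{X}$ while $\pi_1(U_{\geq1})=\pi_1(U)=\R[\uX]$, and $I_2=\gen{Y}$ while $\LT(\pi_2(U_{\geq2}))=\gen{XY}$. So the $I_j$ cannot be recovered this way under TOP. Note also that switching to a POT order is not a repair, since the proof of Theorem~\ref{hilbBezout1} (via Theorem~\ref{hilbvaluation1}) crucially uses TOP together with the lexicographic order to kill $X_n$ in the leading terms of the \foreignlanguage{german}{Schreyer} syzygies.

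A secondary point: you invoke coherence of $\R[\uX]$ to guarantee $U_{\geq j}$ is finitely generated, with the parenthetical justification that ``coherence is preserved by polynomial extension.'' This is not true in general for coherent rings; for a Gröbner ring it does follow, but only through the very Gröbner-basis machinery whose applicability to modules is what you are trying to establish, so as written the argument risks circularity. The transfer of the Gröbner ring property from ideals of $\R[\uX]$ to finitely generated submodules of $\Hm$ with the TOP lexicographic order genuinely requires a different device (for instance an encoding of the module by an ideal in extra variables, with an elimination order compatible with TOP); the position-filtration you propose does not do the job.
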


Let us now treat the case of zero-dimensional coherent Bézout rings.

\begin{theorem}[Syzygy theorem for a zero-dimensional  Bézout ring with a divisibility test]\label{hilbbezou0znz}
  Let \(M=\Hm/U\) be a finitely presented  \(\R[\uX]\)-module, where $\R$ is a coherent zero-dimensional Bézout ring with a divisibility test. Then \(M\) admits
  a free \(\R[\uX]\)-resolution
  \[
    \cdots \stackrel{\varphi_{p+3}}\longrightarrow F_{p} \stackrel{\varphi_{p+2}}\longrightarrow F_{p} \stackrel{\varphi_{p+1}}\longrightarrow F_{p} \stackrel{\varphi_{p}}\longrightarrow F_{p-1} \stackrel{\varphi_{p-1}}\longrightarrow \cdots \stackrel{\varphi_{2}}\longrightarrow F_{1} \stackrel{\varphi_{1}}\longrightarrow F_{0} \stackrel{\varphi_{0}}\longrightarrow M \longrightarrow 0
  \]
  such that for some  \(p \leq n+1\),
\begin{itemize}
\item  \(\LT (\Ker(\varphi_{p}))=\bigoplus_{j=1}^{m_{p}}\gen{b_{j}}\epsilon_{j}\) with \(b_{1},\dots,b_{m_{p}} \in \R \) and \((\epsilon_{1},\dots,\epsilon_{m_{p}})\) a basis for \(F_{p}\),
\item   \(\LT (\Ker(\varphi_{p+2k-1}))=\bigoplus_{j=1}^{m_{p}}\Ann(b_{j})\epsilon_{j}\) for $k\geq 1$,
\item  \(\LT (\Ker(\varphi_{p+2k}))=\bigoplus_{j=1}^{m_{p}}\Ann(\Ann(b_{j}))\epsilon_{j}\) for $k\geq 1$,
\end{itemize}
and at each step where indeterminates remain present, the considered monomial order is \foreignlanguage{german}{Schreyer}'s monomial order.
\end{theorem}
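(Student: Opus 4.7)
The plan is to reduce the theorem to its local instance, namely Corollary~\ref{hilbvaluation0dim} (equivalently Theorem~\ref{hilbvaluation0divis} when the ring has nonzero zerodivisors; the nilpotent-free zero-dimensional case reduces to the discrete-field case), by exactly the local-to-global dynamical argument already used in the proofs of Theorems~\ref{Grob-Buch2}, \ref{schbez}, and~\ref{hilbBezout1}. A coherent zero-dimensional Bézout ring is strict by Definition~\ref{defdef}, so Algorithm~\ref{genBuchbergeralg} and Schreyer's algorithm~\ref{genSyzygyalg} apply and their outputs on~$\R$ specialise, on each comaximal branch, to the outputs computed over the corresponding coherent zero-dimensional valuation ring localisation. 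Moreover archimedeanity of every such localisation (Section~\ref{sec:valgrob}) ensures that $\R$~is a Gröbner ring, so finite generation of $\LT(U)$ needed to run the algorithm is automatic and no extra hypothesis has to appear in the statement.

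I would then mimic the proof of Theorem~\ref{hilbvaluation1} with the TOP lexicographic monomial order $X_n>\cdots>X_1$. Starting from a pseudo-reduced Gröbner basis (Remark~\ref{remBuchb1}) for~$U$, Theorem~\ref{schbez} together with formula~\eqref{sch} shows that each application of Schreyer's syzygy algorithm removes at least one indeterminate from the leading terms: the leading term $\LT(u_{i,j})=b\uX^{\beta}\epsilon_{i}$ with $\beta=(\mdeg(g_{j})-\mdeg(g_{i}))^{+}$ cannot involve the largest indeterminate still appearing, because pseudo-reducedness forces $\mdeg(g_{i})$ and $\mdeg(g_{j})$ to agree in that coordinate whenever $\LP(g_{i})=\LP(g_{j})$. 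After at most $p\leq n+1$ such iterations, one reaches a Gröbner basis $\lst{g_{1},\dots,g_{m_{p}}}$ of $\Ker(\varphi_{p})$ whose leading terms are pure scalar multiples of basis vectors, which can be arranged with pairwise distinct leading positions, so that $\LT(\Ker(\varphi_{p}))=\bigoplus_{j=1}^{m_{p}}\gen{b_{j}}\epsilon_{j}$ with $b_{j}\in\R$.

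From this stage onwards, no indeterminate remains in any leading term, so only the auto-S-polynomials contribute. Writing $\rS(g_{j},g_{j})=c_{j}\+g_{j}$ with $\gen{c_{j}}=\Ann(b_{j})$, which is principal by coherence and strict Bézoutness, Theorem~\ref{schbez} yields $\LT(\Ker(\varphi_{p+1}))=\bigoplus_{j=1}^{m_{p}}\Ann(b_{j})\epsilon_{j}$ and then $\LT(\Ker(\varphi_{p+2}))=\bigoplus_{j=1}^{m_{p}}\Ann(\Ann(b_{j}))\epsilon_{j}$; the identity $\Ann(\Ann(\Ann(I)))=\Ann(I)$ already invoked in the proof of Theorem~\ref{hilbvaluation0divis} then forces the announced $2$-periodicity from step~$p$ onwards. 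The main obstacle I foresee is certifying that each successive S-polynomial can indeed be computed globally without ever opening comaximal branches; but this is exactly the content of the dynamical splitting argument in the proof of Theorem~\ref{Grob-Buch2}, and it persists through all iterations because localisation preserves coherence and strict Bézoutness, so Schreyer's leading-term formula~\eqref{sch} of Theorem~\ref{schvaluation1} transfers verbatim to the present setting.
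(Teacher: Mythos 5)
Your overall strategy matches the paper's exactly: the proof in the paper consists of the single sentence ``This follows directly from the local case,'' invoking Corollary~\ref{hilbvaluation0dim} together with the dynamical local-to-global argument already detailed in the proof of Theorem~\ref{Grob-Buch2}. Your observation that zero-dimensionality makes finite generation of $\LT(U)$ automatic (via archimedeanity of each localisation, Section~\ref{sec:valgrob}) is correct and is exactly why, in contrast to Theorem~\ref{hilbBezout1}, no extra hypothesis appears.

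However, in unfolding the indeterminate-elimination step you give an incorrect micro-justification. You claim that pseudo-reducedness forces $\mdeg(g_i)$ and $\mdeg(g_j)$ to agree in the $X_n$-coordinate whenever $\LP(g_i)=\LP(g_j)$. This is false: in a pseudo-reduced Gröbner basis for a submodule of $\K[X_1,X_2]^m$ one can perfectly well have $\LM(g_i)=X_2 e_1$ and $\LM(g_j)=X_1 e_1$; neither leading term divides the other, so pseudo-reducedness is not violated, yet the multidegrees disagree in the $X_2$-coordinate. The mechanism that actually works, as in the proof of Theorem~\ref{hilbvaluation1}, is a preliminary \emph{ordering} of the basis: arrange the $g_\ell$'s so that whenever $\LP(g_i)=\LP(g_j)$ with $i<j$ one has $N_i>N_j$ in the lexicographic order (pseudo-reduction is used only to exclude the tie $N_i=N_j$, which would obstruct a strict ordering). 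Since $N_i>_{\mathrm{lex}}N_j$ with $X_n$ largest gives $\deg_{X_n}N_i\geq\deg_{X_n}N_j$, the exponent $\beta=(\mdeg(g_j)-\mdeg(g_i))^+$ has vanishing $X_n$-component, and only then does $X_n$ drop out of all leading terms $\LT(u_{i,j})$ in~\eqref{sch}. With this repair, the rest of your argument (reaching leading terms with distinct positions after at most $n$ steps, then iterating the auto-S-polynomials and using $\Ann(\Ann(\Ann(I)))=\Ann(I)$ to obtain the $2$-periodicity) is sound and amounts to a correct elaboration of the paper's one-line reduction.
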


\begin{proof}
  This follows directly from the local case.
\end{proof}

\subsection{The case of
  the integers}


The following theorems are particular cases  of Theorem~\ref{schbez} and Corollary~\ref{hilbbezou0} for $\R=\ZZ$.

\begin{theorem}[Schreyer's algorithm for $\R=\ZZ$]\label{schint}
  Let \(U\) be a submodule of \(\Hm\) with Gröbner basis \(\lst{g_{1},\dots,g_{p}}\).
  Then the relations  \(u_{i,j}\) computed by Algorithm~\ref{genSyzygyalg} form a Gröbner basis for the syzygy module \(\Syz(g_{1},\dots,
  g_{p})\) w.r.t.\ \foreignlanguage{german}{Schreyer}'s monomial order induced by~\(>\) and \(\lst{g_{1},\dots,g_{p}}\). Moreover,  for \(1 \leq  i <  j \leq p\) such that  \(\LP(g_{i})=\LP(g_{j})\), we have
  \[
    \LT (u_{i,j})=\tfrac{\LC(g_{j})}{\gcd(\LC(g_{i}),\LC(g_{j}))}\uX^{(\mdeg(g_{j})-\mdeg(g_{i}))^{+}}\epsilon_{i}\text.
  \]
\end{theorem}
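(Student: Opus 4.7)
The plan is to observe that $\ZZ$ is a coherent strict Bézout domain with a divisibility test, so that the statement is a direct specialisation of Theorem~\ref{schbez} combined with an inspection of what the S-polynomial and Schreyer's syzygy algorithms produce when the base ring has no nonzero zerodivisors.

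First, I would invoke Theorem~\ref{schbez} to conclude at once that $\lst{u_{i,j}}$ is a Gröbner basis for $\Syz(g_{1},\dots,g_{p})$ with respect to Schreyer's monomial order induced by~$>$ and~$\lst{g_{1},\dots,g_{p}}$. This gives the first assertion with no further work.

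For the explicit formula for $\LT(u_{i,j})$ with $1\leq i<j\leq p$ and $\LP(g_{i})=\LP(g_{j})$, I would trace through the S-polynomial algorithm~\ref{genSalg} and Schreyer's syzygy algorithm~\ref{genSyzygyalg} with $\R=\ZZ$. Setting $d=\gcd(\LC(g_{i}),\LC(g_{j}))$, $a=\LC(g_{i})/d$ and $b=\LC(g_{j})/d$, one has $\gcd(a,b)=1$, so that the S-polynomial algorithm yields
\[
  \rS(g_{i},g_{j})=b\,\uX^{\beta} g_{i} - a\,\uX^{\alpha} g_{j},
\]
with $\beta=(\mdeg(g_{j})-\mdeg(g_{i}))^{+}$ and $\alpha=(\mdeg(g_{i})-\mdeg(g_{j}))^{+}$. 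Since by construction $\LM(\uX^{\beta}g_{i})=\LM(\uX^{\alpha}g_{j})$ and $i<j$, the definition of Schreyer's monomial order makes $b\uX^{\beta}\epsilon_{i}>a\uX^{\alpha}\epsilon_{j}$. Moreover, the division algorithm ensures $\LM(q_{\ell})\LM(g_{\ell})\leq\LM(\rS(g_{i},g_{j}))<\LM(\uX^{\beta}g_{i})$ whenever $q_{\ell}\ne 0$, so each $q_{\ell}\epsilon_{\ell}$ has strictly smaller leading monomial than $b\uX^{\beta}\epsilon_{i}$ in Schreyer's order. Thus $\LT(u_{i,j})=b\uX^{\beta}\epsilon_{i}$, which is precisely the claimed formula.

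Finally, I would note that since $\ZZ$ is a domain there are no auto-S-polynomials to account for: the annihilator of any nonzero leading coefficient is~$\{0\}$, so the $i=j$ cases are trivial and only the $i<j$ case contributes a nontrivial relation. There is no real obstacle in this proof; it is essentially a bookkeeping exercise translating the generic formula of Theorem~\ref{schvaluation1}, specialised via Theorem~\ref{schbez}, into the particularly simple Bézout-coefficient form available over~$\ZZ$.
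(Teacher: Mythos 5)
Your proposal is correct and matches the paper's approach: the paper simply presents this theorem as a particular case of Theorem~\ref{schbez}, leaving the derivation of the explicit $\LT(u_{i,j})$ formula as an implicit specialisation of the computation already carried out in Theorem~\ref{schvaluation1}, which is exactly what you make explicit by tracing the S-polynomial and Schreyer's syzygy algorithms over~$\ZZ$. Your remark that auto-S-polynomials vanish because $\ZZ$ is a domain correctly accounts for the restriction to $i<j$ in the statement.
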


\begin{theorem}[Syzygy theorem for $\R=\ZZ$]\label{hilbint1}
Let \(M\) be a finitely generated  \(\ZZ [\uX]\)-module. Then \(M\) admits
a finite free \(\ZZ [\uX]\)-resolution
\[
  0 \rightarrow F_{p} \rightarrow F_{p-1} \rightarrow \cdots \rightarrow F_{1} \rightarrow F_{0} \rightarrow M \rightarrow 0
\]
of length \(p \leq n+1\).

\end{theorem}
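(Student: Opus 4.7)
The plan is to reduce the statement to Corollary~\ref{hilbbezou0}. The ring $\ZZ$ is a Bézout domain of Krull dimension~$\leq 1$ with a divisibility test: the Euclidean algorithm furnishes both the Bézout identity (so every finitely generated ideal is principal) and the divisibility test, while the dimension bound is classical and matches the constructive definition recalled in Definition~\ref{defdef}. Hence Corollary~\ref{hilbbezou0} supplies a finite free $\ZZ[\uX]$-resolution of length~$\leq n+1$ for every finitely \emph{presented} $\ZZ[\uX]$-module, and my task reduces to upgrading the present hypothesis ``finitely generated'' to ``finitely presented''.

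For that, I would argue that $\ZZ[\uX]$ is a coherent ring. Given generators $m_1,\dots,m_r$ of the finitely generated module~$M$, consider the surjection $\ZZ[\uX]^r\twoheadrightarrow M$ and let $U$ be its kernel; it remains to show that $U$ is finitely generated. Since $\ZZ$ is a strongly discrete Bézout domain of Krull dimension~$\leq 1$, the subsection ``When is a valuation ring a Gröbner ring?''\ (together with its extension to Bézout/Prüfer domains, cf.\ the paragraph quoting \cite[Corollary~6]{Y4}) establishes that $\ZZ$ is a Gröbner ring; in particular, $\LT(U)$ is finitely generated for every finitely generated submodule~$U$ of a free $\ZZ[\uX]$-module. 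Theorem~\ref{Grob-Buch2} then guarantees that Buchberger's algorithm~\ref{genBuchbergeralg} terminates on any finite generating family of~$U$, producing a Gröbner basis $\lst{g_1,\dots,g_p}$ for~$U$; and Schreyer's algorithm, specialised to $\ZZ$ in Theorem~\ref{schint}, outputs a finite Gröbner basis of $\Syz(g_1,\dots,g_p)$. A finite generating family for the syzygies of the original generators of~$U$ is then recovered via Remark~\ref{remSyzgen}. Thus $U$ is finitely generated and $M$ is finitely presented.

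Applying Corollary~\ref{hilbbezou0} to this finite presentation of~$M$ yields the desired resolution of length $p\leq n+1$. The main obstacle I foresee is essentially one of bookkeeping: all the constituent facts---that $\ZZ$ is a Gröbner ring, that Buchberger's and Schreyer's algorithms then terminate and produce the needed finite data, and that Corollary~\ref{hilbbezou0} is available---are already established in the preceding sections, and one simply has to thread them together in order to bridge ``finitely generated'' and ``finitely presented''. No further computation is required beyond what the algorithms of Section~\ref{sec:generic-algorithms} perform.
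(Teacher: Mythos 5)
Your reduction to Corollary~\ref{hilbbezou0} is exactly the route the paper takes: the text following Theorem~\ref{schbez} simply states that Theorems~\ref{schint} and~\ref{hilbint1} are ``particular cases of Theorem~\ref{schbez} and Corollary~\ref{hilbbezou0} for $\R=\ZZ$,'' with no further argument. You are right that $\ZZ$ is a Bézout domain of Krull dimension~$\leq 1$ with a divisibility test, so that part is unproblematic.

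However, the bridge you propose from ``finitely generated'' to ``finitely presented'' does not work as written, because it is circular. You invoke the Gröbner-ring property of $\ZZ$ to say ``$\LT(U)$ is finitely generated for every finitely generated submodule~$U$,'' and then feed this into Theorem~\ref{Grob-Buch2}, which takes ``a nonzero finitely generated submodule~$U$'' as input (``Buchberger's algorithm terminates on any finite generating family of~$U$''). But the $U$ you want to treat is the kernel of the surjection $\ZZ[\uX]^r\twoheadrightarrow M$; it is precisely \emph{not} given with a finite generating family, and that is the thing you set out to establish. You cannot start Buchberger's algorithm without one. Coherence also does not close this gap: in the paper's Definition~\ref{defdef}, coherence of $\ZZ[\uX]$ is the statement that the syzygy module of any given finite tuple is finitely generated, i.e.\ it concerns modules that already come with a finite presentation; it does not say that the kernel of an arbitrary surjection from a free module onto a finitely generated module is finitely generated. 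Passing from ``finitely generated'' to ``finitely presented'' constructively requires a Noetherianity argument (ascending-chain machinery à la Mines--Richman--Ruitenburg), which the present paper does not develop; it works only with the Gröbner-ring/coherence framework. In practice the theorem should be read with $M$ given by a finite presentation $M=\Hm/U$ with $U$ finitely generated, exactly as in Corollary~\ref{hilbbezou0}; under that reading your first paragraph already completes the argument and the rest is unnecessary.
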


\begin{example}\label{ex0002}
Let $g_{1}=\X[2]^{2}-\X+3,\,g_{2}=4\X^{2}-4,\,g_{3}=6\X+6 \in \ZZ [\X,\X[2]]$, and let us use the lexicographic order
$>_{1}$ for which $\X[2]>_{1}\X$. We have:
\[
  \begin{aligned}
  \rS(g_{1},g_{2})&=4\X^{2}g_{1}-\X[2]^{2}g_{2}=4\+g_{1}+(-\X+3)g_{2}\text,\\
  \rS(g_{1},g_{3})&=6\X g_{1}-\X[2]^{2}g_{3}= -6\+g_{1}+(-\X+3)g_{3}\text,\\
  \rS(g_{2},g_{3})&=3\+g_{2}-2\X g_{3}= -2\+g_{3}\text.
\end{aligned}
\]
Thus $(g_{1},g_{2},g_{3})$ is a Gröbner basis for $I=\gen{g_{1},g_{2},g_{3}}$ and $\LT (I)=\gen{\X[2]^{2},4\X^{2},6\X}$. By Theorem~\ref{schint},
$u_{1,2}=(4\X^{2}-4,-\X[2]^{2}+\X-3,0)$, $u_{1,3}=(6\X+6,0,-\X[2]^{2}+\X-3)$, $u_{2,3}=(0,3,-2\X+2)$ form a  Gröbner basis for the syzygy module $\Syz(g_{1},g_{2},g_{3})$ w.r.t.\ \foreignlanguage{german}{Schreyer}'s monomial order $>_{2}$ induced by~$>_{1}$ and  $\lst{g_{1},g_{2},g_{3}}$. In particular,
\[\begin{split}
    \LT (\Syz(g_{1},g_{2},g_{3}))=\gen{\LT (u_{1,2}),\LT (u_{1,3}), \LT (u_{2,3})}= \gen{4\X^{2}\epsilon_{1},6\X \epsilon_{1},3\+\epsilon_{2}}\\
    \qquad= \gen{4\X^{2},6\X}\epsilon_{1} \oplus \gen{3}\epsilon_{2}
    =2\gen{2\X^{2},3\X}\epsilon_{1} \oplus \gen{3}\epsilon_{2}=2\gen{\X^{2},3\X}\epsilon_{1} \oplus \gen{3}\epsilon_{2}\text,
  \end{split}
  \]
where $(\epsilon_{1},\epsilon_{2},\epsilon_{3})$ stands for the canonical basis of $\ZZ [\X,\X[2]]^{3}$. Thus
\begin{equation*}
\makebox[\textwidth]{$u'_{1,2}= {\X u_{1,3}-u_{1,2}}=({2\X^{2}+6\X+4},\allowbreak {\X[2]^{2}-\X+3},\allowbreak {-\X[2]^{2}\X+\X^{2}-3\X}), u_{1,3}, u_{2,3}$}
\end{equation*}
form
a reduced Gröbner basis for $\Syz(g_{1},g_{2},g_{3})$. We have:
\[
  \begin{aligned}
    \rS(u'_{1,2},u_{1,3})&=3\+u'_{1,2}-\X u_{1,3}=2\+u_{1,3}+(\X[2]^{2}-\X+3)u_{2,3}\text,\\
    \rS(u'_{1,2},u_{2,3})&=\rS(u_{1,3},u_{2,3})=0\text.
\end{aligned}\]
We recover that  $(u'_{1,2},u_{1,3},u_{2,3} )$ is a Gröbner basis for $\Syz(g_{1},g_{2},g_{3})$. By Theorem~\ref{schint}, $u_{1,2;1,3}=(3,-\X-2,-\X[2]^{2}+\X-3)$ forms a (pseudo-reduced) Gröbner basis for the syzygy module $\Syz(u'_{1,2},u_{1,3},u_{2,3})$ w.r.t.\ \foreignlanguage{german}{Schreyer}'s monomial order~$>_{3}$ induced by $>_{2}$ and  $\lst{u'_{1,2},u_{1,3},u_{2,3}}$. In particular, $\LT (\Syz(u'_{1,2},u_{1,3},u_{2,3}))=\gen{\LT (u_{1,2;1,3})}= \gen{3}\epsilon'_{1} $
where $(\epsilon'_{1},\epsilon'_{2},\epsilon'_{3})$ stands for the canonical basis of $\ZZ [\X,\X[2]]^{3}$. It follows that $\Syz(u'_{1,2},u_{1,3},u_{2,3})$ is free. We conclude that  $I$ admits
the following length-$2$ free $\ZZ [\X,\X[2]]$-resolution:
\[\makebox[\textwidth]{$
    0 \rightarrow \ZZ[\X,\X[2]] \xrightarrow{u_{1,2;1,3}} \ZZ[\X,\X[2]]^{3} \xrightarrow{
      \left(\begin{smallmatrix}
          {\color{OliveGreen}u'_{1,2}}\\
          u_{1,3}\\
          u_{2,3}
        \end{smallmatrix}\right)} \ZZ[\X,\X[2]]^{3} \xrightarrow{
      \left(\begin{smallmatrix}
          g_{1}\\
          g_{2}\\
          g_{3}
        \end{smallmatrix}\right)} I \rightarrow 0$.}
\]
\end{example}

\subsection{The case of \texorpdfstring{$\ZZ /N\ZZ$}{Z/NZ}}\label{s4}

The elements of $\ZZ/N\ZZ$ are simply written as integers (their representatives in $ \intervalles $). When talking about the gcd of two nonzero elements in $\ZZ/N\ZZ$ we mean the gcd of their representatives in $ \intervalless $. For a nonzero element $a$ in $\ZZ/N\ZZ$, letting $b= \gcd(N, a)$, the class of $\dfrac{N}{b}$ in $\ZZ/N\ZZ$ will be denoted by $\ann(a)$; it generates $\Ann(a)$.

\begin{itemize}
\item The Division algorithm~\ref{gendivalg} attains its goal: the gcd and the Bézout identity to be found in line~\ref{gendivalg:5} will be computed by finding $d,b,b_{i}$ $(i\in D)$ in $\ZZ$ such that $d=\gcd(N,\gcd\sotq{\LC(h_{i})}{i\in D})=bN+\sumtq{b_{i}\LC(h_{i})}{i\in D}$; the euclidean division in line~\ref{gendivalg:5} will be performed in~$\ZZ$;
\item The S-polynomial algorithm~\ref{genSalg} attains its goal: note that in this case, the generator of the annihilator of~$\LC(f)$ to be found on line~\ref{genSalg:2} may be taken to be~$\ann(\LC(f))$, so that the auto-S-polynomial of~$f$~is
\[
  \rS(f,f)=\ann(\LC(f))f\text;
\]
\item Buchberger's algorithm~\ref{genBuchbergeralg} attains its goal.
\end{itemize}

The following theorems are particular cases of Theorems~\ref{schbez} and~\ref{hilbbezou0znz} for $\R=\ZZ /N\ZZ $.

\begin{theorem}[Schreyer's algorithm for $\R=\ZZ /N\ZZ $]\label{schintn}
  Let
  \(U\) be a submodule of \(\Hm\) with Gröbner basis
  \(\lst{g_{1},\dots,g_{p}}\).  Then the relations \(u_{i,j}\) computed
  by Algorithm~\ref{genSyzygyalg} form a Gröbner basis for the
  sy\-zy\-gy mo\-dule \(\Syz(g_{1},\dots, g_{p})\) w.r.t.\
  \foreignlanguage{german}{Schreyer}'s monomial order induced
  by~\(>\) and \(\lst{g_{1},\dots,g_{p}}\). Moreover, for all
  \(1 \leq i\leq j \leq p\) such that \(\LP(g_{i})=\LP(g_{j})\), we have
  \[
    \LT (u_{i,j})=
    \begin{cases}
      \ann(\LC(g_{i}))\epsilon_{i}&\text{if $i=j$,}\\[.3em]
      \tfrac{\LC(g_{j})}{\gcd(\LC(g_{i}),\LC(g_{j}))}\uX^{(\mdeg(g_{j})-\mdeg(g_{i}))^{+}}\epsilon_{i}&\text{otherwise.}
    \end{cases}
  \]
\end{theorem}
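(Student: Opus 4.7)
The plan is to observe that $\ZZ/N\ZZ$ falls under the hypotheses of Theorem~\ref{schbez} and then trace through Schreyer's syzygy algorithm~\ref{genSyzygyalg} to obtain the explicit leading terms. First I would verify the hypotheses: $\ZZ/N\ZZ$ is a coherent zero-dimensional Bézout ring with a divisibility test. Indeed, as a quotient of~$\ZZ$ it is a zero-dimensional Bézout ring, hence strict by Definition~\ref{defdef}; coherence follows from $\Ann(a)=\gen{\ann(a)}$ being principal for each~$a$; and the divisibility test is the usual one (\(a\in\gen{b}\) iff \(\gcd(N,b)\) divides~\(a\)). Hence Theorem~\ref{schbez} applies and directly yields that the~\(u_{i,j}\) form a Gröbner basis for \(\Syz(g_{1},\dots,g_{p})\) w.r.t.\ Schreyer's monomial order.

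Next, to pin down $\LT(u_{i,j})$ I would specialise the argument given in the proof of Theorem~\ref{schvaluation1}. In the case $i=j$, the auto-S-polynomial computed by Algorithm~\ref{genSalg} is $\rS(g_{i},g_{i})=\ann(\LC(g_{i}))\+g_{i}$ (as noted in Section~\ref{s4}), and the division by $\lst{g_{1},\dots,g_{p}}$ in lines~\ref{genSyzygyalg:16}--\ref{genSyzygyalg:18} produces polynomials~$q_{\ell}$ satisfying $\LM(q_{\ell})\+\LM(g_{\ell})\leq\LM(\rS(g_{i},g_{i}))<\LM(g_{i})$ whenever $q_{\ell}\ne0$; by the very definition of Schreyer's monomial order this forces $\LT(u_{i,i})=\ann(\LC(g_{i}))\+\epsilon_{i}$.

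For $1\leq i<j\leq p$ with $\LP(g_{i})=\LP(g_{j})$, setting $d=\gcd(\LC(g_{i}),\LC(g_{j}))$ Algorithm~\ref{genSalg} produces coprime $a,b$ with $a\+d=\LC(g_{i})$ and $b\+d=\LC(g_{j})$, hence $b=\LC(g_{j})/d$. Writing $\mu=\mdeg(g_{i})$ and $\nu=\mdeg(g_{j})$, one has $\LM(\uX^{(\nu-\mu)^{+}}g_{i})=\LM(\uX^{(\mu-\nu)^{+}}g_{j})$ and $i<j$, so Schreyer's order gives $\uX^{(\nu-\mu)^{+}}\epsilon_{i}>\uX^{(\mu-\nu)^{+}}\epsilon_{j}$. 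Again the division contributions $q_{\ell}\epsilon_{\ell}$ are strictly below $\uX^{(\nu-\mu)^{+}}\epsilon_{i}$ in Schreyer's order, so
\[
  \LT(u_{i,j})=b\+\uX^{(\nu-\mu)^{+}}\epsilon_{i}=\tfrac{\LC(g_{j})}{\gcd(\LC(g_{i}),\LC(g_{j}))}\uX^{(\mdeg(g_{j})-\mdeg(g_{i}))^{+}}\epsilon_{i}\text,
\]
as announced.

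The main subtlety is that $\ZZ/N\ZZ$ is not local, so one cannot use the simplified valuation-ring form of Algorithm~\ref{def222Spaire} (where one of $a,b$ may be taken to equal~$1$); one must work with the generic Algorithm~\ref{genSalg} and keep the symmetric expression $b=\LC(g_{j})/d$. Apart from this bookkeeping issue, the proof is a direct transcription of the valuation-ring case.
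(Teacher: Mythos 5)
Your proposal is correct and matches what the paper does, just more explicitly: the paper's ``proof'' of Theorem~\ref{schintn} is a single sentence declaring it a particular case of Theorem~\ref{schbez}, and the explicit leading-term formula is left implicit as the obvious global analogue of~\eqref{sch} in Theorem~\ref{schvaluation1}. Your verification of the hypotheses (zero-dimensional Bézout $\Rightarrow$ strict; coherence via $\Ann(a)=\gen{\ann(a)}$), your invocation of Theorem~\ref{schbez} for the Gröbner-basis statement, and your direct tracing of the $\LT(u_{i,j})$ formula through Algorithm~\ref{genSyzygyalg} using Schreyer's order are all sound and reproduce the argument that the paper gives for Theorem~\ref{schvaluation1} (with the bookkeeping correctly adapted to the non-local S-polynomial normalisation). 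One tiny edge case worth noting, though it does not affect correctness: when $\LC(g_i)$ is a unit, $\ann(\LC(g_i))=0$, so $\rS(g_i,g_i)=0$ and $u_{i,i}=0$; the stated formula then reads $\LT(u_{i,i})=0$, which is consistent with the paper's convention $\LT(0)=0$, and the zero syzygy is simply redundant in the basis.
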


\begin{theorem}[Syzygy theorem for $\R=\ZZ /N\ZZ$]\label{hilbvaluation0znz}
    Let \(M\) be a finitely presented  \((\ZZ /N\ZZ )[\uX]\)-module. Then \(M\) admits
  a free \((\ZZ /N\ZZ )[\uX]\)-resolution
  \[
    \cdots \stackrel{\varphi_{p+3}}\longrightarrow F_{p} \stackrel{\varphi_{p+2}}\longrightarrow F_{p} \stackrel{\varphi_{p+1}}\longrightarrow F_{p} \stackrel{\varphi_{p}}\longrightarrow F_{p-1} \stackrel{\varphi_{p-1}}\longrightarrow \cdots \stackrel{\varphi_{2}}\longrightarrow F_{1} \stackrel{\varphi_{1}}\longrightarrow F_{0} \stackrel{\varphi_{0}}\longrightarrow M \longrightarrow 0
  \]
  such that for some  \(p \leq n+1\),
  \[
    \begin{aligned}
      \LT (\Ker(\varphi_{p}))&=\bigoplus_{j=1}^{m_{p}}\gen{b_{j}}\epsilon_{j}\text,&
      \LT (\Ker(\varphi_{p+1}))&=\bigoplus_{j=1}^{m_{p}}\frac{N}{\gcd(N, b_{j})}\epsilon_{j}\text,\\
      \LT (\Ker(\varphi_{p+2}))&=\bigoplus_{j=1}^{m_{p}}\gen{b_{j}}\epsilon_{j}\text,&
      \LT (\Ker(\varphi_{p+3}))&=\bigoplus_{j=1}^{m_{p}}\frac{N}{\gcd(N, b_{j})}\epsilon_{j}\text{, etc.,}
    \end{aligned}
  \]
  where \((\epsilon_{1},\dots,\epsilon_{m_{p}})\) is a basis for \(F_{p}\), \(b_{1},\dots,b_{m_{p}} \in \ZZ /N\ZZ \), and the considered monomial order is \foreignlanguage{german}{Schreyer}'s monomial order.
\end{theorem}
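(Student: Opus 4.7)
The plan is to obtain Theorem~\ref{hilbvaluation0znz} as a direct specialisation of Theorem~\ref{hilbbezou0znz} to the base ring $\R=\ZZ/N\ZZ$. First I would check that $\ZZ/N\ZZ$ meets the hypotheses of Theorem~\ref{hilbbezou0znz}: it is a Bézout ring, being a quotient of the Bézout domain~$\ZZ$; it is equipped with a divisibility test and is coherent, both facts reducing to the euclidean algorithm in~$\ZZ$ together with the observation that the annihilator of a nonzero $a\in\ZZ/N\ZZ$ is the principal ideal generated by $N/\gcd(N,a)$; finally, it is zero-dimensional, e.g.\ by the Chinese remainder theorem reducing to prime-power cases~$\ZZ/p^{r}\ZZ$, in each of which every element is either invertible or nilpotent.

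Next I would invoke Theorem~\ref{hilbbezou0znz} with $\R=\ZZ/N\ZZ$: this immediately produces a free $(\ZZ/N\ZZ)[\uX]$-resolution of~$M$ of the displayed form, stabilising at some $p\leq n+1$ with $\LT(\Ker(\varphi_{p}))=\bigoplus_{j=1}^{m_{p}}\gen{b_{j}}\epsilon_{j}$, and, for $k\geq 1$, $\LT(\Ker(\varphi_{p+2k-1}))=\bigoplus_{j=1}^{m_{p}}\Ann(b_{j})\epsilon_{j}$ and $\LT(\Ker(\varphi_{p+2k}))=\bigoplus_{j=1}^{m_{p}}\Ann(\Ann(b_{j}))\epsilon_{j}$, with \foreignlanguage{german}{Schreyer}'s monomial order being used at each stage where indeterminates remain present.

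It then remains to translate the abstract annihilators $\Ann$ and $\Ann\circ\Ann$ into the arithmetic of $\ZZ/N\ZZ$ so as to match the claimed formulas. For nonzero $b\in\ZZ/N\ZZ$, the equivalence $ab\equiv 0\pmod{N}\iff (N/\gcd(N,b))\divides a$ gives $\Ann(b)=\gen{N/\gcd(N,b)}$. Applying this formula a second time, and noting that $\gcd(N,N/\gcd(N,b))=N/\gcd(N,b)$, yields $\Ann(\Ann(b))=\gen{\gcd(N,b)}=\gen{b}$, the last equality holding because $b$ and $\gcd(N,b)$ generate the same principal ideal modulo~$N$. Substituting these two identities into the output of Theorem~\ref{hilbbezou0znz} gives precisely the alternating description of $\LT(\Ker(\varphi_{p+k}))$ required by the statement. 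No genuine obstacle remains: once Theorem~\ref{hilbbezou0znz} is at hand, everything else is elementary manipulation of principal ideals in~$\ZZ/N\ZZ$ together with the verification that the hypotheses are indeed satisfied.
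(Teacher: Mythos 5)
Your proposal is correct and takes essentially the same route as the paper, which simply notes that Theorem~\ref{hilbvaluation0znz} is a particular case of Theorem~\ref{hilbbezou0znz} for $\R=\ZZ/N\ZZ$ after observing that $\ann(a)=N/\gcd(N,a)$ generates $\Ann(a)$; your derivations of $\Ann(b)=\gen{N/\gcd(N,b)}$ and $\Ann(\Ann(b))=\gen{\gcd(N,b)}=\gen{b}$ supply exactly the arithmetic the paper leaves implicit.
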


\begin{example}\label{ex000216}
  Let $g_{1}=\X[2]+1$, $g_{2}=\X^{3}+\X^{2}+6$, $g_{3}=3\X^{2}$, $g_{4}=9$  in $(\ZZ /12\+\ZZ)[\X,\X[2]]$, and let us use the lexicographic order
  $>_{1}$ for which $\X[2]>_{1}\X$. We have
  \[\makebox[\textwidth]{$
      \begin{aligned}
        \rS(g_{1},g_{1})&=0\+g_{1}=0\text,\\
        \rS(g_{1},g_{2})&=\X^{3}g_{1}-\X[2]g_{2}=(-\X^{2}-6)g_{1}+g_{2}\text,\\
        \rS(g_{1},g_{3})&=3\X^{2}g_{1}-\X[2]g_{3}= g_{3}\text,\\
        \rS(g_{1},g_{4})&=9\+g_{1}-\X[2]g_{4}= g_{4}\text,\\
        \rS(g_{2},g_{2})&=0\+g_{2}=0\text,
      \end{aligned}\,
      \begin{aligned}
        \rS(g_{2},g_{3})&=3\+g_{2}-\X g_{3}= g_{3}+2\+g_{4}\text,\\
        \rS(g_{2},g_{4})&=9\+g_{2}-\X^{3}g_{3}= (\X^{2}+6)g_{4}\text,\\
        \rS(g_{3},g_{3})&=4\+g_{3}=0\text,\\
        \rS(g_{3},g_{4})&=3\+g_{3}-\X^{2}g_{4}= 0,\\
        \rS(g_{4},g_{4})&=4\+g_{4}=0\text.
      \end{aligned}
  $}\]
  Thus $(g_{1},g_{2},g_{3},g_{4})$ is a (pseudo-reduced) Gröbner basis for $I=\gen{g_{1},g_{2},g_{3},g_{4}}$ and $\LT (I)=\gen{\X[2],\X^{3},3\X^{2},9}$. By Theorem~\ref{schintn},
  $u_{1,2}=(\X^{3}+\X^{2}+6,-\X[2]-1,\allowbreak0,0)$, $u_{1,3}=(3\X^{2},0,-\X[2]-1,0)$, $u_{1,4}=(9,0,0,-\X[2]-1)$, $u_{2,3}=(0,3,-\X-1,\allowbreak-2)$, $u_{2,4}=(0,9,-\X^{3},-\X^{2}-6)$, $u_{3,3}=(0,0,4,0)$, $u_{3,4}=(0,0,3,-\X^{2})$, $u_{4,4}=(0,0,0,4)$ form a  Gröbner basis for the syzygy module $\Syz(g_{1},g_{2},g_{3},g_{4})$ w.r.t.\ \foreignlanguage{german}{Schreyer}'s monomial order~$>_{2}$ induced by $>_{1}$ and  $\lst{g_{1},g_{2},g_{3},g_{4}}$. In particular,
  \[\makebox[\textwidth]{$
      \begin{aligned}
        \LT (\Syz(g_{1},g_{2},g_{3},g_{4}))&=\gen{\LT (u_{1,2}),\dots, \LT (u_{4,4})}\\
        &= \gen{\X^{3},3\X^{2},9}\epsilon_{1} \oplus \gen{3,9}\epsilon_{2}\oplus \gen{4,3}\epsilon_{3}\oplus \gen{4}\epsilon_{4}\\
        &=\gen{\X^{3},3}\epsilon_{1} \oplus \gen{3}\epsilon_{2}\oplus \gen{1}\epsilon_{3}\oplus \gen{4}\epsilon_{4}\text,
      \end{aligned}
  $}\]
  where $(\epsilon_{1},\epsilon_{2},\epsilon_{3},\epsilon_{4})$ stands for the canonical basis of $(\ZZ /12\+\ZZ)[\X,\X[2]]^{4}$. Thus $u_{1,2}$, $u'_{1,4}=-u_{1,4}=(3,0,0,\X[2]+1)$, $u_{2,3}$, $u'_{3,3}=u_{3,3}-u_{3,4}=(0, 0, 1,\X^{2})$, $u_{4,4}$ form
  a reduced Gröbner basis for $\Syz(g_{1},g_{2},g_{3},g_{4})$. We have
  \[\makebox[\textwidth]{$
      \begin{aligned}
        \rS(u_{1,2},u'_{1,4})&=3\+u_{1,2}-\X^{3}u'_{1,4}\\
        &=
        \begin{aligned}[t]
          (\X^{2}+2)u'_{1,4}+(3\+\X[2]+3)u_{2,3}
          +(3\+\X[2]\X+3\+\X[2]+3\X+3)u'_{3,3}\\
          +(2\+\X[2]\X^{3}+2\+\X[2]\X^{2}+2\X^{3}+2\X^{2}+\X[2]+1)u_{4,4}\rlap,
        \end{aligned}\\
        \rS(u'_{1,4},u'_{1,4})&=4\+u'_{1,4}=(\X[2]+1)u_{4,4}\text,\\
        \rS(u_{2,3},u_{2,3})&=4\+u_{2,3}=(8\X+8)u'_{3,3}+(\X^{3}+\X^{2}+1)u_{4,4}\text,\\
        \rS(u_{4,4},u_{4,4})&=3\+u_{4,4}=0\text.
      \end{aligned}
      $}\] By Theorem~\ref{schintn}, the elements
  $u_{1,2;1,4}=(3,-\X^{3}-\X^{2}-2,{-3\X[2]-3},\allowbreak-3\X[2]\X-3\X[2]-3\+\X-3,-2\X[2]\X^{3}-2\X[2]\X^{2}-\X[2]-2\+\X^{3}-2\+\X^{2}-1)$,
  $u_{1,4;1,4}=(0,4,0,0,-\X[2]-1)$,
  $u_{2,3;2,3}=(0,0,4,\allowbreak{-8\X-8},\allowbreak-\X^{3}-\X^{2}-1)$,
  $u_{4,4;4,4}=(0,0,0,0,3)$ form a (pseudo-reduced) Gröbner basis for
  the syzygy module $\Syz(u_{1,2},u'_{1,4},u_{2,3},u'_{3,3},u_{4,4})$
  w.r.t.\ \foreignlanguage{german}{Schreyer}'s monomial order $>_{3}$
  induced by $>_{2}$ and
  $\lst{u_{1,2},u'_{1,4},u_{2,3},u'_{3,3},u_{4,4}}$. In particular,
  $\LT (\Syz(u_{1,2},u'_{1,4},u_{2,3},u'_{3,3},u_{4,4}))=
  \gen{3}\epsilon'_{1} \oplus \gen{4}\epsilon'_{2} \oplus \gen{4}\epsilon'_{3}
  \oplus \gen{3}\epsilon'_{5}$, where $(\epsilon'_{1},\dots,\epsilon'_{5})$
  stands for the canonical basis of $(\ZZ /12\+\ZZ)[\X,\X[2]]^{5}$.

  We conclude that $I$ admits the free $\R[\X,\X[2]]$-resolution ($\R=\ZZ /12\+\ZZ$)
  \[\makebox[\textwidth]{$
      \cdots \stackrel{\varphi_{4}}\longrightarrow  \R[\X,\X[2]]^{4} \stackrel{\varphi_{3}}\longrightarrow  \R[\X,\X[2]]^{4} \stackrel{\varphi_{2}}\longrightarrow \R[\X,\X[2]]^{5} \stackrel{\varphi_{1}}\longrightarrow \R[\X,\X[2]]^{4} \stackrel{\varphi_{0}}\longrightarrow I \rightarrow 0
  $}\]
  with   $ \LT (\Ker(\varphi_{2i}))=\gen{4}\epsilon''_{1} \oplus \gen{3}\epsilon''_{2} \oplus \gen{3}\epsilon''_{3} \oplus \gen{4}\epsilon''_{4}$ and
  $ \LT (\Ker(\varphi_{2i+1}))=\gen{3}\epsilon''_{1} \oplus \gen{4}\epsilon''_{2} \oplus \gen{4}\epsilon''_{3} \oplus \gen{3}\epsilon''_{4}$ for $i\geq 1$, where $(\epsilon''_{1},\dots,\epsilon''_{4})$ stands for the canonical basis of~$\R[\X,\X[2]]^{4}$.
\end{example}
\addcontentsline{toc}{section}{References}
\providecommand\MR[1]{}


\begin{thebibliography}{10}
\providecommand{\url}[1]{\texttt{#1}}
\providecommand{\urlprefix}{URL }
\expandafter\ifx\csname urlstyle\endcsname\relax
  \providecommand{\doi}[1]{doi:\discretionary{}{}{}#1}\else
  \providecommand{\doi}{doi:\discretionary{}{}{}\begingroup
  \urlstyle{rm}\Url}\fi
\providecommand{\selectlanguage}[1]{\relax}

\def\bbland{and}
\def\bbletal{et~al.}
\def\bbleditors{editors}        \def\bbleds{eds.}
\def\bbleditor{editor}          \def\bbled{ed.}
\def\bbledby{edited by}
\def\bbledition{edition}        \def\bbledn{ed.}
\def\bblvolume{volume}          \def\bblvol{vol.}
\def\bblof{of}
\def\bblnumber{number}          \def\bblno{no.}
\def\bblin{in}
\def\bblpages{pages}            \def\bblpp{pp.}
\def\bblpage{page}              \def\bblp{p.}
\def\bblchapter{chapter}        \def\bblchap{chap.}
\def\bbltechreport{Technical Report}
\def\bbltechrep{Tech. Rep.}
\def\bblmthesis{Master's thesis}
\def\bblphdthesis{Ph.D. thesis}
\def\bblfirst{first}            \def\bblfirsto{1st}
\def\bblsecond{second}          \def\bblsecondo{2nd}
\def\bblthird{third}            \def\bblthirdo{3rd}
\def\bblfourth{fourth}          \def\bblfourtho{4th}
\def\bblfifth{fifth}            \def\bblfiftho{5th}
\def\bblst{st}  \def\bblnd{nd}  \def\bblrd{rd}
\def\bblth{th}
\def\bbljan{January}  \def\bblfeb{February}  \def\bblmar{March}
\def\bblapr{April}    \def\bblmay{May}       \def\bbljun{June}
\def\bbljul{July}     \def\bblaug{August}    \def\bblsep{September}
\def\bbloct{October}  \def\bblnov{November}  \def\bbldec{December}
\def\bblins{in}
\def\bblinj{in}

\newcommand{\Capitalize}[1]{\uppercase{#1}}
\newcommand{\capitalize}[1]{\expandafter\Capitalize#1}

\bibitem{AL}
William~W. Adams \bbland{} Philippe Loustaunau.
\newblock \emph{An introduction to {G}r\"{o}bner bases}, \emph{Graduate Studies
  in Mathematics}, \bblvol{}~3.
\newblock American Mathematical Society, Providence, 1994.
\newblock \doi{10.1090/gsm/003}.

\bibitem{Bi67}
Errett Bishop.
\newblock \emph{Foundations of constructive analysis}.
\newblock McGraw-Hill, New York, 1967.

\bibitem{BB85}
Errett Bishop \bbland{} Douglas Bridges.
\newblock \emph{Constructive analysis},
  \emph{\foreignlanguage{german}{Grundlehren der mathematischen
  Wissenschaften}}, \bblvol{} 279.
\newblock Springer, Berlin, 1985.
\newblock \doi{10.1007/978-3-642-61667-9}.

\bibitem{Buchbe0}
Bruno Buchberger.
\newblock {\selectlanguage{german}\emph{\foreignlanguage{german}{Ein
  {A}lgorithmus zum {A}uffinden der {B}asiselemente des {R}estklassenringes
  nach einem nulldimensionalen {P}olynomideal}}}.
\newblock Ph.{D}.\ thesis, \foreignlanguage{german}{Mathematisches Institut,
  Universit{\"a}t Innsbruck}, 1965.
\newblock Translation by Michael P. Abramson: An algorithm for finding the
  basis elements of the residue class ring of a zero dimensional polynomial
  ideal, \emph{J. Symbolic Comput.}, 41(3--4)\string:475--511, 2006.
  \MR{2202562}.

\bibitem{coxlittleoshea05}
David~A. Cox, John Little, \bbland{} Donal O'Shea.
\newblock \emph{Using algebraic geometry}, \emph{Graduate Texts in
  Mathematics}, \bblvol{} 185.
\newblock Springer, New York, second \bbledn{}, 2005.
\newblock \doi{10.1007/b138611}.

\bibitem{CLO}
David~A. Cox, John Little, \bbland{} Donal O'Shea.
\newblock \emph{Ideals, varieties, and algorithms: an introduction to
  computational algebraic geometry and commutative algebra}.
\newblock Undergraduate Texts in Mathematics, Springer, Cham, fourth \bbledn{},
  2015.
\newblock \doi{10.1007/978-3-319-16721-3}.

\bibitem{DLQ2014}
Gema-Maria D\'{\i}az-Toca, Henri Lombardi, \bbland{} Claude Quitt\'e.
\newblock \emph{Modules sur les anneaux commutatifs: cours et exercices}.
\newblock Calvage \& Mounet, Paris, 2014.

\bibitem{EH}
Viviana Ene \bbland{} J{\"u}rgen Herzog.
\newblock \emph{Gr{\"o}bner bases in commutative algebra}, \emph{Graduate
  Studies in Mathematics}, \bblvol{} 130.
\newblock American Mathematical Society, Providence, 2012.

\bibitem{GY}
Maroua Gamanda \bbland{} Ihsen Yengui.
\newblock Noether normalization theorem and dynamical {G}r\"{o}bner bases over
  {B}ezout domains of {K}rull dimension 1.
\newblock \emph{J. Algebra}, 492,  52--56, 2017.
\newblock \doi{10.1016/j.jalgebra.2017.09.002}.

\bibitem{HY}
Amina Hadj~Kacem \bbland{} Ihsen Yengui.
\newblock Dynamical {G}r\"obner bases over {D}edekind rings.
\newblock \emph{J. Algebra}, 324(1),  12--24, 2010.
\newblock \doi{10.1016/j.jalgebra.2010.04.014}.

\bibitem{ACMC}
Henri Lombardi \bbland{} Claude Quitt{\'e}.
\newblock \emph{Commutative algebra: constructive methods. Finite projective
  modules}, \emph{Algebra and Applications}, \bblvol{}~20.
\newblock Springer, Dordrecht, 2015.
\newblock \doi{10.1007/978-94-017-9944-7}.
\newblock Translated from the French (Calvage \& Mounet, Paris, 2011, revised
  and extended by the authors) by Tania K. Roblot.

\bibitem{MRR}
Ray Mines, Fred Richman, \bbland{} Wim Ruitenburg.
\newblock \emph{A course in constructive algebra}.
\newblock Universitext, Springer, New York, 1988.
\newblock \doi{10.1007/978-1-4419-8640-5}.

\bibitem{MoY}
Samiha Monceur \bbland{} Ihsen Yengui.
\newblock On the leading terms ideals of polynomial ideals over a valuation
  ring.
\newblock \emph{J. Algebra}, 351,  382--389, 2012.
\newblock \doi{10.1016/j.jalgebra.2011.11.015}.

\bibitem{redei35}
L{\'a}szl{\'o} R\'edei.
\newblock Ein kombinatorischer {S}atz.
\newblock \emph{Acta Sci. Math. (Szeged)}, 7,  39--43, 1934--1935.
\newblock \urlprefix\url{http://acta.bibl.u-szeged.hu/13432}.

\bibitem{Sc}
Frank-Olaf Schreyer.
\newblock {\selectlanguage{german}\emph{\foreignlanguage{german}{Die
  {B}erechnung von {S}yzygien mit dem verallgemeinerten {W}eierstra{\ss}schen
  {D}ivisionssatz und eine {A}nwendung auf analytische {C}ohen-{M}acaulay
  {S}tellenalgebren minimaler {M}ultiplizit{\"a}t}}}.
\newblock \bblmthesis{}, \foreignlanguage{german}{Universit{\"a}t Hamburg},
  1980.

\bibitem{Ye2}
Ihsen Yengui.
\newblock Dynamical {G}r\"{o}bner bases.
\newblock \emph{J. Algebra}, 301(2),  447--458, 2006.
\newblock \doi{10.1016/j.jalgebra.2006.01.051}.

\bibitem{Y2}
Ihsen Yengui.
\newblock Corrigendum to ``{D}ynamical {G}r\"{o}bner bases'' and to
  ``{D}ynamical {G}r\"{o}bner bases over {D}edekind rings''.
\newblock \emph{J. Algebra}, 339(1),  370--375, 2011.
\newblock \doi{10.1016/j.jalgebra.2011.05.004}.

\bibitem{Y4}
Ihsen Yengui.
\newblock The {G}r\"obner ring conjecture in the lexicographic order case.
\newblock \emph{Math. Z.}, 276(1--2),  261--265, 2014.
\newblock \doi{10.1007/s00209-013-1197-y}.

\bibitem{Y5}
Ihsen Yengui.
\newblock \emph{Constructive commutative algebra: projective modules over
  polynomial rings and dynamical {G}r{\"o}bner bases}, \emph{Lecture Notes in
  Mathematics}, \bblvol{} 2138.
\newblock Springer, Cham, 2015.
\newblock \doi{10.1007/978-3-319-19494-3}.

\end{thebibliography}

\end{document}